\documentclass{amsproc}
\usepackage{amsrefs,amssymb,enumitem,myproof}

\usepackage[colorlinks,bookmarks=false,linkcolor=blue,linktocpage]{hyperref}
\usepackage[capitalize,nameinlink,noabbrev]{cleveref}
\crefformat{equation}{\textup{#2(#1)#3}}
\crefname{page}{page}{pages}
\crefname{claim}{Claim}{Claims}
\crefname{enumi}{step}{steps}
\crefname{enumii}{step}{steps}
\crefname{enumii}{step}{steps}

\newtheorem{theo}{Theorem}[section]
\newtheorem{theorem}[theo]{Theorem}
\newtheorem{lemma}[theo]{Lemma}

\newtheorem{prop}[theo]{Proposition}

\newtheorem{claim0}{Claim}
\newtheorem{claim}[claim0]{Claim}

\theoremstyle{remark}
\newtheorem{example}[theo]{Example}
\newtheorem{alg}[theo]{Algorithm}
\newtheorem{remark}[theo]{Remark}

\newenvironment{ex}{\begin{example}}{\exend\end{example}}
\newenvironment{rem}{\begin{remark}}{\exend\end{remark}}

\newcommand{\exend}{\hfill\ensuremath{\triangleleft}}
\newcommand{\upto}{,\ldots,}

\newcommand{\Spec}{\operatorname{Spec}}
\newcommand{\GL}{\operatorname{GL}}
\newcommand{\ch}{\operatorname{char}}
\newcommand{\im}{\operatorname{im}}
\newcommand{\LM}{\operatorname{LM}}
\newcommand{\NF}{\operatorname{NF}}
\newcommand{\Tr}{\operatorname{Tr}}
\newcommand{\Ext}{\operatorname{Ext}}
\newcommand{\codim}{\operatorname{codim}}
\newcommand{\Var}{\mathcal V}
\newcommand{\Id}{\mathcal I}
\renewcommand{\phi}{\varphi}
\newcommand{\RR}{{\mathbb R}}
\newcommand{\CC}{{\mathbb C}}

\newcommand{\ZZ}{{\mathbb Z}}

\newcommand{\FF}{{\mathbb F}}

\newcommand{\df}[1]{{\bf #1\/}}
\newcommand{\R}[1][K]{#1[\mathbf x]}%
\newcommand{\I}{H}%

\begin{document}

\title[Invariant Theory: a Third Lease of Life]{Invariant Theory: a
  Third Lease of Life\footnote{\bf This article is intended for the
    volume ``Combinatorial, Computational, and Applied Algebraic
    Geometry: A Tribute to Bernd Sturmfels'' edited by Serkan Hosten,
    Diane Maclagan, and Frank Sottile.}}

\author{Gregor Kemper}%
\address{Technische Universi\"at M\"unchen, Department of Mathematics,
  Boltzmannstr. 3, 85748 Garching, Germany}%
\email{kemper@ma.tum.de}
\date{}

\subjclass[2020]{Primary 13A50, 13P10}

\begin{abstract}
  In 1993, just about a century after the epoch of Classical Invariant
  Theory and almost 30 years after Mumford's seminal book on Geometric
  Invariant Theory, Bernd Sturmfels approached the subject from a new,
  algorithmic perspective in
  his book on Algorithms in Invariant Theory. This article aims to
  highlight some of the developments that followed the book. Inspired
  by Bernd's style of teaching mathematics, the goal is neither
  comprehensiveness nor maximal generality, but to emphasize the main
  ideas and to convey the beauty of the subject. The article is
  intended as an invitation to invariant theory, and in particular to
  Bernd's work and the ideas inspired by it.
\end{abstract}

\maketitle

\section*{Introduction}

When Bernd Sturmfels's book on Algorithms in Invariant
Theory~\cite{stu} appeared in 1993, I immediately devoured it, since
at the time I was busy developing a package for computing invariant
rings in the Maple computer algebra system. A major part of my
Ph.D. thesis was concerned with computational invariant theory, and
Bernd's book served as an indication that this would be the part with
the best chances of getting people interested. As it turned out, the
Bernd's book gave the subject a significant boost, especially by its
overarching theme of wedding invariant theory to Gr\"obner basis
methods. This is the idea behind the ``third lease of life'' in the
title of this article, with the previous ``leases'' thought of as
Classical Invariant Theory and Geometric Invariant Theory. Bernd's
book came at a time when there was a confluence of new developments in
computational invariant theory and in modular invariant theory, with
Harm Derksen's algorithm emerging shortly after the book's
publication.

The book also got me in contact with Bernd, who later invited me as a
contributor to a course he gave on the book in Eindhoven---my first
international mathematical gig. Years after that, in 1999, he invited
Harm Derksen and me to Berkeley at a time when the two of us were
embarking on a joint book project in computational invariant
theory~\cite{Derksen:Kemper}. Let me quote from the preface: ``{\em
  \ldots it was an invitation by Bernd Sturmfels to spend two weeks
  together in Berkeley that really got us started on this book
  project. We thank Bernd for his strong encouragement and very
  helpful advice. During the stay at Berkeley, we started outlining
  the book, making decisions about notation, etc.}''

After writing his invariant theory book, Bernd, as he so often does,
rather quickly moved on to other topics. But invariant theory, having
received a new impetus from his book, also moved on, and it is the
purpose of this article to cast some light on what has happened since
1993. However, this is not meant to be a typical survey article, since
it will not aim for comprehensiveness and since it will contain quite
a lot of proofs. Readers should not only learn what is true, but also,
when space permits, why it is true. In this way, I will try to mimic
what I understand to be Bernd's style of teaching mathematics. For the
selection of material to be presented here, one criterion is that it
should be at least loosely related to computation. Another criterion
is that the results and the proofs, when they are included, should be
nice, or even beautiful and elegant. So there is definitely some
cherry-picking going on. (And, inevitably, my personal taste will
favor some cherries over others.) To make the material as accessible
as possible and to keep things simple for readers, attaining maximal
generality will not be a paramount goal. This, I believe, is also in
keeping with Bernd's style of teaching.

Keeping things simple for readers also means to avoid hopping between
different situations and changing notation. In this article, we will
always consider a group $G$ acting on a polynomial ring
$K[x_1 \upto x_n] =: \R$ over a field by linear transformations of the
variables~$x_i$. The first and foremost object of our interest will be
the invariant ring $\R^G$. Here is another convention used throughout:
when I say ``Bernd's book'', I will always be referring to his
book~\cite{stu} on Algorithms in Invariant Theory.

The first three sections will look at the case where $G$ is finite,
distinguishing between the \df{nonmodular} case (where the
characteristic of $K$ does not divide $|G|$, including $\ch(K) = 0$)
and the \df{modular} case. Specifically, \cref{sNonmodular} is devoted
to nonmodular invariant theory and will present Noether's degree bound
and King's algorithm for computing $\R^G$. In \cref{sModular} on
modular invariant theory of finite groups, we address Symonds's degree
bound (without proof) and look at results about the Cohen-Macaulay
property (giving a full proof for one of them). The third section
deals with separating invariants. We show that they always satisfy
Noether's degree bound, and give a self-contained proof of a result by
Emilie Dufresne about separating invariants and reflection groups.

The last three sections are about invariants of infinite groups. They
are significantly shorter than the first three sections, since there
are fewer new results to report. But this does not mean that they are
any less important. Maybe even on the contrary, as exemplified by
Derksen's algorithm, which is the topic of \cref{sReductive}. In the
fifth section we show how invariant fields of potentially nonreductive
groups can be computed using the same ideal that occurs prominently in
Derksen's algorithm, but in a very different way. The section also
explains how to compute invariant rings of reductive groups in
positive characteristic. For this, the stepping stone is the
computation of separating invariants, which are the topic of
\cref{sSepInf}. Here we present a proof of the (folklore) statement
that there is always a separating set of size $\le 2 n + 1$,
independently of the group.

\paragraph{\bf Acknowledgments} I would like to thank Serkan Hosten,
Diane Maclagan, and Frank Sottile for organizing this volume and, of
course, for inviting me to make a contribution. It was a pleasure to
do so. Also many thanks to the anonymous reviewers for their careful
reading of the manuscript and for numerous helpful suggestions,
corrections and comments.


\section{Nonmodular invariants of finite groups}
\label{sNonmodular}

In nonmodular invariant theory of finite groups (more generally, in
invariant theory of linearly reductive groups) the supreme ruler is
the \df{Reynolds operator}
\[
  \mathcal R: \ \R \to \R^G, \ f \mapsto \frac{1}{|G|} \sum_{\sigma
    \in G} \sigma(f),
\]
which averages over the group.
It makes an appearance in the proof of the following proposition,
which in some form was already known to Hilbert and Noether. (More
than a hundred years later, I learned it from Bernd's book.) The
proposition tells us how the ideal
\[
\I := \R \cdot \R_+^G
\]
generated by all nonconstant homogeneous invariants plays a central
role. (Here $\R_+^G$ stands for the set of all invariants with
constant coefficient equal to zero, and the product $\R \cdot \R_+^G$
consists of all finite sums of products of an element from $\R$ and
one from $\R_+^G$.) $H$ has come to be known as the \df{Hilbert
  ideal}, and it is an ideal in $\R$.

\begin{prop} \label{pHilbertIdeal}%
  For nonconstant homogeneous invariants $f_1 \upto f_m \in \R^G$, it
  is equivalent that they generate the Hilbert ideal $\I$ (as an ideal) 
  and that they generate the invariant ring $\R^G$ (as an algebra).
\end{prop}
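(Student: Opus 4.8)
The plan is to prove the two implications separately. The direction ``algebra generators $\Rightarrow$ ideal generators'' is routine bookkeeping, while the converse is the heart of the matter and is where the Reynolds operator does its work.

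For the easy direction, suppose $f_1 \upto f_m$ generate $\R^G$ as a $K$-algebra. Each $f_i$ is a nonconstant homogeneous invariant, hence lies in $\R_+^G$, so $(f_1\upto f_m) \subseteq \I$. Conversely, an arbitrary element of $\R_+^G$ is a polynomial in the $f_i$, and since the $f_i$ vanish at the origin (being homogeneous of positive degree) this polynomial has zero constant term, so it lies in the ideal $(f_1 \upto f_m)$. As $\I$ is generated by $\R_+^G$, we get $\I \subseteq (f_1\upto f_m)$ and hence equality.

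For the converse, assume $f_1 \upto f_m$ generate $\I$ as an ideal. Since $G$ acts by degree-preserving transformations, $\R^G$ is spanned by homogeneous invariants, so it suffices to show that every homogeneous invariant $f$ is a polynomial in the $f_i$; I would do this by induction on $\deg f$. The case $\deg f = 0$ is clear. For $\deg f = d > 0$ we have $f \in \R_+^G \subseteq \I$, so ideal membership yields $f = \sum_{i} g_i f_i$ with $g_i \in \R$, and comparing homogeneous components I may take each $g_i$ homogeneous of degree $d - \deg f_i$.

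Now comes the key step: apply $\mathcal R$. The explicit formula for $\mathcal R$ shows it is $\R^G$-linear, since $\sigma(g_i f_i) = \sigma(g_i) f_i$ for the invariant $f_i$; combined with $\mathcal R(f) = f$ this gives
\[
  f = \mathcal R(f) = \sum_i \mathcal R(g_i f_i) = \sum_i \mathcal R(g_i)\, f_i.
\]
Each $\mathcal R(g_i)$ is a homogeneous invariant of degree $d - \deg f_i < d$, because every $f_i$ has positive degree, so by the induction hypothesis it is a polynomial in the $f_i$; therefore so is $f$. The main obstacle---and really the whole point---is the $\R^G$-linearity of $\mathcal R$: it is exactly what turns the ideal relation $f = \sum_i g_i f_i$ into the algebra relation $f = \sum_i \mathcal R(g_i) f_i$, and it is precisely this property that relies on $|G|$ being invertible, i.e.\ on the nonmodular hypothesis.
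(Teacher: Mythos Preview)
Your proof is correct and follows essentially the same route as the paper's: the easy direction is identical bookkeeping, and the hard direction is the same induction on degree, writing $f = \sum g_i f_i$, applying the Reynolds operator, and using $\R^G$-linearity to pull the $f_i$ out. The only cosmetic difference is that you take homogeneous components before applying $\mathcal R$ whereas the paper does so afterward, and you spell out the $\R^G$-linearity of $\mathcal R$ explicitly rather than using it silently.
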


Another equivalent condition is that (the classes of) the~$f_i$
generate the quotient space $\I/(\R_+ \cdot \I)$ as a vector space over
$K$. This fact is not needed here, and its proof is left as an
exercise.

\begin{proof}[Proof of \cref{pHilbertIdeal}]
  If the~$f_i$ generate $\R^G$, then every invariant~$h \in \R_+^G$
  can be written as $h = F(f_1 \upto f_m)$ with $F$ a polynomial
  in~$m$ variables having zero constant coefficient. Since every
  product of powers of some of the~$f_i$ lies in the $\R$-ideal
  $(f_1 \upto f_m)_{\R}$ generated by the~$f_i$, so does~$h$. We
  conclude that $\R_+^G \subseteq (f_1 \upto f_m)_{\R}$ and thus 
  $\I \subseteq (f_1 \upto f_m)_{\R}$. The reverse inclusion follows
  from $f_i \in \I$.

  For the converse, assume that $\I = (f_1 \upto f_m)_{\R}$ and
  let~$f$ be a homogeneous invariant of degree~$d > 0$. Then 
  $f \in \I$, so $f = \sum_{i=1}^m g_i f_i$ with $g_i \in \R$. 
  Applying the Reynolds operator yields
  \[
    f = \mathcal R(f) = \sum_{i=1}^m \mathcal R(g_i) f_i.
  \]
  By considering the homogeneous part of degree~$d$ of this equation,
  we may assume that all summands are homogeneous of degree~$d$, so
  the $\mathcal R(g_i)$ have degree $< d$. So by induction on~$d$ we
  may assume that they lie in the algebra $K[f_1 \upto f_m]$ generated
  by the~$f_i$. Therefore the same is true for~$f$.
\end{proof}

Together with Hilbert's basis theorem, which by historical accounts
was proved for this very purpose, the proposition already shows that
if there is a Reynolds operator, then the invariant ring is finitely
generated. It also follows that any degree bound for generators of the
Hilbert ideal is automatically a degree bound for the invariant ring.

This brings us to Noether's degree bound, proved by Emmy
Noether~\cite{noe:a} in 1916, which says that the invariant ring is
generated by homogeneous invariants of degree at most the group order
$|G|$. Actually, she gave two proofs. However, the first proof
requires that $|G|!$ is invertible in $K$, so the characteristic must
be~$0$ or bigger than $|G|$, and the second one, which can be found in
Bernd's book~\cite[Theorem~2.1.4]{stu}, only works in
characteristic~$0$. A proof that extends to the full nonmodular
case---people were speaking of the ``Noether gap''---remained elusive
for quite some time, until it was finally found independently by
Fleischmann~\cite{fleisch:c} and Fogarty~\cite{Fogarty:2001}. We give
a version that has gone through various stages of simplification. In
fact, Noether's bound is an almost immediate consequence of the
following lemma, which makes a surprising observation and is proved by
a nice, elementary trick.

\begin{lemma} \label{lHilbertIdeal}%
  The Hilbert ideal $\I$ contains every monomial in the~$x_i$ of
  degree~$|G|$.
\end{lemma}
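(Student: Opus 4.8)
The plan is to reduce the statement to a single monomial $\mathbf{x}^{\mathbf a} = x_1^{a_1}\cdots x_n^{a_n}$ of total degree $|\mathbf a| = |G| =: N$ and to prove $\mathbf{x}^{\mathbf a}\in\I$; since $\I$ is homogeneous this automatically yields all higher degrees as well. The elementary engine is the \emph{orbit polynomial} of a linear form. For $\ell = \sum_i c_i x_i$ set
\[
  \chi_\ell(T) := \prod_{\sigma\in G}\bigl(T - \sigma(\ell)\bigr) = \sum_{j=0}^{N}(-1)^j e_j\,T^{N-j}\ \in\ \R^G[T],
\]
whose coefficients $e_j$ are the elementary symmetric functions of the orbit $\{\sigma(\ell):\sigma\in G\}$; as each $\tau\in G$ merely permutes this orbit, $e_j$ is a homogeneous invariant of degree $j$, so $e_j\in\R_+^G$ for $j\ge1$. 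Because $\ell$ is itself a root (take $\sigma=\mathrm{id}$), we have $\chi_\ell(\ell)=0$, and solving for the leading term gives $\ell^N = e_1\ell^{N-1} - e_2\ell^{N-2} + \cdots \mp e_N$, an $\R$-linear combination of the $e_j\in\R_+^G$. Hence $\ell^N\in\I$ for every linear form $\ell$.

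First I would settle the case $\ch K = 0$ or $\ch K > N$ by polarizing. Write $\mathbf{x}^{\mathbf a} = x_{i_1}\cdots x_{i_N}$ as a product of $N$ variables, introduce indeterminates $z_1\upto z_N$, and apply the relation above to $\ell = \sum_k z_k x_{i_k}$. Since the $e_j$ then have coefficients in $K[z_1\upto z_N]$ and are homogeneous of positive degree in the $x_i$, the identity $\ell^N = e_1\ell^{N-1}-\cdots\mp e_N$ shows that every coefficient of $\ell^N$, expanded as a polynomial in the $z_k$, lies in $\I$. Expanding instead by the multinomial theorem, the coefficient of the squarefree monomial $z_1\cdots z_N$ equals $N!\,\mathbf{x}^{\mathbf a}$; hence $N!\,\mathbf{x}^{\mathbf a}\in\I$, and when $N!$ is invertible we conclude $\mathbf{x}^{\mathbf a}\in\I$. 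The main obstacle is that this final division collapses in the modular ``gap'' $\ch K = p$ with $p\le N$ but $p\nmid N$, where the relevant multinomial coefficients can vanish. There the $N$-th powers of linear forms no longer span the degree-$N$ part of $\I$, so the powers alone are insufficient and one is forced to exploit invariants of degree $<N$ as well; closing this gap is the real content of the lemma, established independently by Fleischmann and Fogarty.

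To cover the gap I would enlarge the supply of invariants fed into $\I$. Introducing auxiliary variables $t_1\upto t_n$, consider the bihomogeneous norm
\[
  P := \prod_{\sigma\in G}\Bigl(\sum_{i=1}^{n} t_i\,\sigma(x_i)\Bigr),
\]
viewed as a polynomial in the $t_i$ with coefficients in $\R$. Reindexing the product via $\sigma\mapsto\tau\sigma$ shows that $P$ is $G$-invariant in the $x$-variables, so the coefficient of each monomial in the $t_i$ is a homogeneous invariant of degree $N$ and hence lies in $\I$. The plan would then be to show that these invariants, taken together with the lower-degree generators of $\I$, span the whole space of degree-$N$ monomials. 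I expect the characteristic-free verification of this spanning to be the genuinely hard part: no single product over $G$ can yield the untwisted monomial $x_{i_1}\cdots x_{i_N}$ as a coefficient, since every factor of such a product is twisted by its own $\sigma$, so one cannot hope for a closed formula and must instead argue by an induction over the subgroups of $G$, transporting the result upward by relative transfers in the manner of Fleischmann.
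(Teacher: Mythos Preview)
Your proposal is not a complete proof. The first half---orbit polynomials of linear forms plus polarization to obtain $N!\,\mathbf{x}^{\mathbf a}\in\I$---is correct and is essentially Noether's classical argument; it only settles the case where $N!$ is invertible. For the remaining ``gap'' ($\ch K=p$ with $p\le N$, $p\nmid N$) you give only a plan: extract degree-$N$ invariants from the norm $P=\prod_{\sigma}\sum_i t_i\,\sigma(x_i)$ and then show, somehow, that together with lower-degree invariants they span all degree-$N$ monomials. You yourself call this spanning ``the genuinely hard part'' and defer to an unspecified induction over subgroups via relative transfers. Nothing is actually proved here, and in fact no such induction is needed.

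The paper's argument avoids all of this with a single elementary trick that works uniformly whenever $|G|$ is invertible. Write the target monomial as a product $\prod_{\sigma\in G} f_\sigma$ of $|G|$ elements of $\R_+$, \emph{indexed by the group elements}. For each fixed $\tau\in G$ the product $\prod_{\sigma\in G}\bigl(f_\sigma-(\tau\sigma)(f_\sigma)\bigr)$ vanishes, since the factor at $\sigma=\tau^{-1}$ is zero. Expand each such product over subsets $M\subseteq G$ and sum over $\tau$: every term with $M\ne\emptyset$ carries the factor $\sum_{\tau\in G}\prod_{\sigma\in M}(\tau\sigma)(f_\sigma)$, which is a transfer and hence lies in $\R_+^G\subseteq\I$, while the $M=\emptyset$ term is $|G|\cdot\prod_\sigma f_\sigma$. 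Dividing by $|G|$ finishes. The key idea you are missing is precisely this indexing of the $|G|$ factors by $G$ itself, which manufactures a vanishing factor for each $\tau$ and makes the transfer appear automatically; it replaces your polarization-and-multinomial step and eliminates the dependence on $N!$.
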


\begin{proof}
  We will show a bit more: that $\I$ contains every product of~$|G|$
  (not necessarily distinct) polynomials of $\R_+$. Such polynomials
  may as well be indexed by the elements of $G$, so we claim that any
  product $\prod_{\sigma \in G} f_\sigma$, with $f_\sigma \in \R_+$,
  lies in $\I$. For every $\tau \in G$ we have
  \[
    \prod_{\sigma \in G} \bigl(f_\sigma - (\tau\sigma)(f_\sigma)\bigr)
    = 0.
  \]
  Fully expanding the product and summing over all $\tau \in G$ yields
  \[
    \sum_{M \subseteq G} (-1)^{|M|} \left(\prod_{\sigma \in G
        \smash{\setminus} M} f_\sigma\right) \cdot \left(\sum_{\tau
        \in G}\prod_{\sigma \in M} (\tau \sigma) (f_\sigma)\right) =
    0.
  \]
  The summand for $M = \emptyset$ is
  $|G| \cdot \prod_{\sigma \in G} f_\sigma$, and all other summands
  lie in $\I$. So dividing by $|G|$ proves the claim.
\end{proof}

\begin{theorem}[Noether's degree bound] \label{tNoether}%
  If $|G|$ is invertible in $K$, then the invariant ring $\R^G$ is
  generated as an algebra by homogeneous invariants of degree $\le |G|$.
\end{theorem}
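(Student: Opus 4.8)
The plan is to read off the theorem from the two preceding results. By \cref{pHilbertIdeal}, a family of homogeneous invariants generates $\R^G$ as an algebra exactly when it generates the Hilbert ideal $\I$ as an ideal, so it suffices to exhibit ideal generators of $\I$ of degree $\le |G|$. Accordingly I would let $J \subseteq \R$ be the ideal generated by all homogeneous invariants of positive degree $\le |G|$, and prove $J = \I$; the finitely many elements of a $K$-basis of $\bigoplus_{1 \le d \le |G|} (\R^G)_d$ then furnish the desired generating set, first of $\I$ and hence, by \cref{pHilbertIdeal}, of $\R^G$.

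The inclusion $J \subseteq \I$ is clear. For the reverse inclusion the key step is to show that $J$ already swallows the whole power $\R_+^{|G|}$ of the irrelevant ideal $\R_+ = (x_1\upto x_n)_\R$. This ideal power is generated by the monomials of degree $|G|$, and by \cref{lHilbertIdeal} each such monomial $t$ lies in $\I = \R\cdot\R_+^G$, say $t = \sum_j g_j h_j$ with $g_j \in \R$ and $h_j \in \R_+^G$. Passing to the homogeneous component of degree $|G|$, I may assume every summand is homogeneous of degree $|G|$; then each invariant factor $h_j$ has degree $\le |G|$ and is therefore one of the generators of $J$, whence $t \in J$. This degree bookkeeping is really the only thing to check, and it is where the argument lives: homogeneity forces the invariants produced by \cref{lHilbertIdeal} to sit in degree $\le |G|$.

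Granting $\R_+^{|G|} \subseteq J$, the remaining inclusion is automatic. A homogeneous invariant $f$ of degree $d \le |G|$ is by definition a generator of $J$, while one of degree $d > |G|$ satisfies $f \in \R_+^d \subseteq \R_+^{|G|} \subseteq J$. Since $\I$ is generated by its positive-degree homogeneous invariants, this yields $\I \subseteq J$ and hence $\I = J$, completing the proof.

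I do not expect a genuine obstacle here: both nontrivial ingredients are in hand, and each already exploits the standing hypothesis that $|G|$ is invertible in $K$ — via the Reynolds operator in \cref{pHilbertIdeal} and via the division by $|G|$ at the end of \cref{lHilbertIdeal}. The single point demanding attention is the degree count in the key step, namely ensuring that the invariant factors extracted from a degree-$|G|$ monomial really have degree at most $|G|$ and not more.
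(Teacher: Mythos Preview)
Your proof is correct and follows essentially the same route as the paper's: both define the ideal generated by homogeneous invariants of degree $\le |G|$, use \cref{lHilbertIdeal} together with a homogeneity argument to show it contains all monomials of degree $|G|$ (hence all of $\R_+^{|G|}$), and conclude via \cref{pHilbertIdeal}. You are in fact slightly more explicit than the paper about the degree bookkeeping, which the paper compresses into the single clause ``By \cref{lHilbertIdeal}, $I$ contains every monomial of degree~$|G|$.''
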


\begin{proof}
  Let $I \subseteq \R$ be the ideal generated by all nonconstant
  homogeneous invariants of degree $\le |G|$. By \cref{lHilbertIdeal},
  $I$ contains every monomial of degree~$|G|$, and therefore also
  every monomial of degree $\ge |G|$. So $I$ contains every
  homogeneous polynomial of degree $\ge |G|$. This implies that $I$
  contains every nonconstant homogeneous invariant. Thus the Hilbert
  ideal $H$ is generated in degree $\le |G|$, and by
  \cref{pHilbertIdeal} the same is true for $\R^G$.
\end{proof}

In summary, we have presented a self-contained proof of Noether's
bound that, after removal of comments and other luxuries, fits on a
single page and also closes the Noether gap. \\

With a degree bound, we automatically have an algorithm for computing
generators of the invariant ring: simply apply the Reynolds operator
to all monomials of degree $\le |G|$. Of course, this is extremely
inefficient. In Bernd's book two algorithms are given for computing
nonmodular invariant rings of finite groups. The first one
\cite[Algorithm~2.2.5]{stu} is driven by knowledge of the Hilbert
series of the invariant ring, and the second
\cite[Algorithm~2.5.8]{stu} computes a so-called Hironaka
decomposition. This subdivides the generating invariants into primary
invariants, which are algebraically independent, and secondary
invariants, which generate the invariant ring as a module over the
subalgebra generated by the primary invariants. The second algorithm
from Bernd's book has gone through various optimizations (for example
\cite{kem:g} improves the finding of primary invariants), but has
remained the state of the art for about 20 years until Simon King
\cite{King:2013} proposed a new one in 2013. Although more elementary
than Sturmfels' algorithms, King's algorithm is very clever and
therefore makes good material for this article. Unlike the algorithms
given by Sturmfels, King's algorithm requires only {\em truncated}
Gr\"obner bases (see, for example,
\cite[Section~4.5.B]{Kreuzer:Robbiano:2005}). For this reason,
intuition says that it should be more effective and, more importantly,
experience confirms this. That is why King's algorithm is now the
standard in Magma \cite{magma} and Macaulay2 \cite{FGGHMN:2020},
qualifying it to be called today's state of the art. At the time of
writing this, it is also becoming part of the brand new computer
algebra system OSCAR~[\url{https://oscar.computeralgebra.de/}].  So
here it is:

\begin{alg}[King's algorithm~\cite{King:2013}] \label{aKing} \mbox{}
  \begin{description}
  \item[Input] A finite group $G$ acting linearly on the
    variables~$x_i$, with $|G|$ not a multiple of $\ch(K)$.
  \item[Output] A minimal generating set of the invariant ring $\R^G$
    as an algebra.
  \end{description}
  \begin{enumerate}[label=(\arabic*)]
  \item \label{aKing1} Set $S := \emptyset$. Choose a monomial
    ordering on $\R$.
  \item \label{aKing2} For $d = 1,2, \ldots$ execute the following
    steps:
    \begin{enumerate}[label=(\arabic*)]
      \setcounter{enumii}{\arabic{enumi}}
    \item \label{aKing3} Let $\mathcal G$ be a $d$-truncated Gr\"obner
      basis of the ideal $(S) \subseteq \R$ generated by $S$. This can
      be computed by Buchberger's algorithm, but considering only
      s-polynomials of degree~$\le d$.
    \item \label{aKing4} Let $M$ be the set of all monomials in $\R$
      of degree~$d$ that are not divisible by any leading monomial
      $\LM(g)$, $g \in \mathcal G$.
    \item \label{aKing5} For $t \in M$ execute the following two
      steps:
      \begin{enumerate}[label=(\arabic*)]
        \setcounter{enumiii}{\arabic{enumii}}
      \item \label{aKing6} Compute $f := \mathcal R(t)$, with
        $\mathcal R$ the Reynolds operator, and
        $h := \NF_{\mathcal G}(f)$, the {\em normal form} of~$f$ with
        respect to $\mathcal G$. By this we mean the unique
        polynomial~$h$ with $f - h \in (\mathcal G)$ such that no
        monomial occurring in~$h$ is divisible by any $\LM(g)$,
        $g \in \mathcal G$.
      \item \label{aKing7} If $h \ne 0$, then adjoin~$f$ to $S$ and~$h$ to
        $\mathcal G$, and set $M := M \setminus \{\LM(h)\}$.
      \end{enumerate}
      \setcounter{enumii}{\arabic{enumiii}}
    \item \label{aKing8} If $M = \emptyset$, then terminate the
      algorithm and return $S$.
    \end{enumerate}
  \end{enumerate}
\end{alg}

I find it hard to understand immediately what is going on in the
algorithm and why it works, and readers might feel the same
way. Especially the consecutive application of the Reynolds operator
and the normal form in \cref{aKing6} and the business of removing
monomials from $M$ during the second loop may seem a bit obscure. To
me, the only way to understand the algorithm is through a proof of
correctness, which is presented here.

\begin{proof}[Proof of correctness of \cref{aKing}]
  The proof is subdivided into five claims.

  \begin{claim} \label{c1}%
    During the inner loop initiated by \cref{aKing5}, $\mathcal G$
    remains a $d$-truncated Gr\"obner basis of $(S)$ and $M$ remains
    the set of monomials of degree~$d$ not divisible by any $\LM(g)$,
    $g \in \mathcal G$.
  \end{claim}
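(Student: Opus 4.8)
The plan is to verify the claim as a loop invariant: assuming it holds at the start of an iteration of the inner loop (for some $t \in M$), I show it still holds after the update in \cref{aKing7}. Throughout I use that $(S)$ is a homogeneous ideal, since $S$ consists of invariants $\mathcal R(t')$, each homogeneous of the degree of the monomial $t'$ to which the Reynolds operator was applied. Consequently the Gr\"obner basis $\mathcal G$ consists of homogeneous polynomials, and normal-form reduction preserves homogeneity.

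First I would record two consequences of homogeneity. Since $t$ is a monomial of degree $d$ and $G$ acts linearly, each $\sigma(t)$ is homogeneous of degree $d$, so $f = \mathcal R(t)$ is homogeneous of degree $d$; hence $h = \NF_{\mathcal G}(f)$ is homogeneous of degree $d$ as well (or zero). When $h \ne 0$, its leading monomial $\LM(h)$ is a monomial of degree $d$ that, being a monomial of a normal form, is not divisible by any $\LM(g)$, $g \in \mathcal G$. By the inductive description of $M$ this means $\LM(h) \in M$, so removing it in \cref{aKing7} is legitimate.

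Next I would treat the Gr\"obner basis assertion via the truncated Buchberger criterion: the enlarged set $\mathcal G \cup \{h\}$ is a $d$-truncated Gr\"obner basis of $(S \cup \{f\})$ provided all s-polynomials of degree $\le d$ reduce to zero. That the enlarged basis and the enlarged generating set define the same ideal follows because $f - h \in (\mathcal G) \subseteq (S)$. The s-polynomials among the old elements of $\mathcal G$ already reduce to zero, and enlarging the basis can only help. The crucial point is that every new s-polynomial $S(h,g)$, $g \in \mathcal G$, has degree strictly greater than $d$: the least common multiple of $\LM(h)$ and $\LM(g)$ is a proper multiple of $\LM(h)$, because $\LM(h)$, of degree $d$, is not divisible by $\LM(g)$; hence this lcm has degree $> d$. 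So no new s-polynomial of degree $\le d$ arises, and $\mathcal G \cup \{h\}$ is again a $d$-truncated Gr\"obner basis.

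Finally, for the description of $M$, I would observe that the leading monomials of the enlarged basis are those of $\mathcal G$ together with $\LM(h)$. A degree-$d$ monomial fails to be divisible by any of them exactly when it lies in the old $M$ and is moreover not divisible by $\LM(h)$; since $\LM(h)$ has degree $d$, the only degree-$d$ monomial it divides is $\LM(h)$ itself. Thus the updated set of standard monomials of degree $d$ is precisely $M \setminus \{\LM(h)\}$, matching \cref{aKing7}. The main subtlety, and the real reason a truncated basis suffices, is the degree bound on the new s-polynomials in the third step; everything else is bookkeeping enabled by homogeneity.
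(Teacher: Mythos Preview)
Your proof is correct and follows essentially the same approach as the paper: both argue that because $h$ is in normal form with respect to the current $\mathcal G$, every new s-polynomial has $\operatorname{lcm}$ of leading monomials strictly larger than $\LM(h)$ and hence degree $> d$, and that since $\LM(h)$ has degree exactly $d$, removing it from $M$ updates $M$ correctly. Your version spells out the homogeneity bookkeeping and the ideal equality $(\mathcal G \cup \{h\}) = (S \cup \{f\})$ more explicitly, but the substance is the same.
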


  Indeed, since each~$h$ that is adjoined to $\mathcal G$ is in normal
  form with respect to the elements already in $\mathcal G$, no new
  s-polynomials of degree $\le d$ occur after adjoining~$h$, so the
  ``new'' $\mathcal G$ is again a $d$-truncated Gr\"obner basis. It
  also generates the same ideal as the ``new'' $S$. Moreover, since
  $\LM(h)$ has degree~$d$, removing it from $M$ amounts to removing
  all monomials divisible by it.

  \begin{claim} \label{c2}%
    After~$d$ passes through the outer loop initiated by
    \cref{aKing2}, all homogeneous invariants of degree $\le d$ lie in
    the algebra generated by $S$: $\R^G_{\le d} \subseteq K[S]$.
  \end{claim}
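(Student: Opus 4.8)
The plan is to induct on $d$, with \cref{c2} for $d-1$ as hypothesis; the case $d=0$ is trivial since $\R^G_0 = K \subseteq K[S]$. As $S$ only grows, the hypothesis still gives $\R^G_{\le d-1} \subseteq K[S]$ for the enlarged $S$, so I only need to show that an arbitrary homogeneous invariant $f$ of degree exactly $d$ lies in $K[S]$; throughout, $S$ and $\mathcal G$ denote their values at the end of the $d$-th pass. I would first reduce this to an ideal-membership statement: \emph{every homogeneous invariant of degree $d$ lies in $(S)$}. Granting this, algebra membership follows exactly as in the proof of \cref{pHilbertIdeal}. Writing $f = \sum_i g_i s_i$ with $s_i \in S$ homogeneous invariants, passing to degree-$d$ parts, and applying the Reynolds operator gives $f = \sum_i \mathcal R(g_i) s_i$, where each $\mathcal R(g_i)$ is an invariant of degree $d - \deg s_i < d$. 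By the induction hypothesis $\mathcal R(g_i) \in K[S]$, and since $s_i \in S$ we get $f \in K[S]$.

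To prove the ideal-membership statement I would show that $\NF_{\mathcal G}(f) = 0$. The key structural fact is that $(S)$ is $G$-stable, because $S$ consists of invariants and hence $\mathcal R$ maps $(S)$ into itself. Write $f = q + r$ with $q \in (S)$ and $r = \NF_{\mathcal G}(f)$; by \cref{c1}, $r$ is a $K$-linear combination $\sum_t a_t\, t$ of the monomials $t$ surviving in $M$ at the end of the pass. Applying $\mathcal R$ to $f = q + r$ and using $\mathcal R(f) = f$ and $\mathcal R(q) \in (S)$ shows $r \equiv \mathcal R(r) \pmod{(S)}$. Taking normal forms, and using that $\NF_{\mathcal G}$ is idempotent and $K$-linear with $r$ already reduced, yields $r = \NF_{\mathcal G}(\mathcal R(r)) = \sum_t a_t\, \NF_{\mathcal G}(\mathcal R(t))$.

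It therefore suffices to check that $\NF_{\mathcal G}(\mathcal R(t)) = 0$ for every monomial $t$ surviving in $M$. Here I would unwind the inner loop of \cref{aKing5}: a surviving monomial is never removed, so it must be visited, at which moment $h := \NF_{\mathcal G'}(\mathcal R(t))$ is computed against the partial basis $\mathcal G'$ then present. If $h = 0$ then $\mathcal R(t) \in (\mathcal G') \subseteq (S)$; if $h \ne 0$ then $h$ is adjoined to $\mathcal G$, so both $h$ and $\mathcal R(t) - h$ lie in $(S)$, and again $\mathcal R(t) \in (S)$. Either way $\NF_{\mathcal G}(\mathcal R(t)) = 0$, so $r = 0$ and $f \in (S)$, completing the induction.

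I expect the main obstacle to be precisely this bookkeeping of the inner loop: making rigorous that every surviving monomial is actually visited, and that visiting it forces $\mathcal R(t)$ into the \emph{final} ideal $(S)$ no matter which partial basis $\mathcal G'$ was used at the time. A secondary point to keep straight is that all these normal-form and membership computations are legitimate with only a $d$-truncated Gr\"obner basis, which is fine because every polynomial in sight ($f$, $r$, and the $\mathcal R(t)$) is homogeneous of degree $d$.
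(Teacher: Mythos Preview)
Your proof is correct and follows essentially the same inductive strategy as the paper: decompose the invariant via the normal form, apply the Reynolds operator, handle the $(S)$-part by induction, and show that $\mathcal R(t)$ lies in $(S)$ (equivalently in $K[S]$, which is \cref{c3}) for every monomial $t$ surviving in $M$. The only differences are organizational: you route through the ideal $(S)$ first and then upgrade to $K[S]$ by a second Reynolds-plus-induction step, whereas the paper applies Reynolds once to the decomposition $\widetilde f = \sum c_i f_i + \sum a_j t_j$ and lands directly in $K[S]$; on the other hand, your explicit case split on whether the visited $t$ produced $h = 0$ or $h \ne 0$ is more careful than the paper's terse claim that $h = 0$ for surviving monomials.
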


  The proof proceeds by induction on~$d$, so we assume
  $\R^G_{\le d-1} \subseteq K[S]$ at the start of the $d$th pass. In
  the following, let $S$, $\mathcal G$, and $M$ denote the sets as
  they are {\em after} the $d$th pass. Let~$\widetilde f$ be a
  homogeneous invariant of degree~$d$. Using \cref{c1}, we get
  $\widetilde f - \NF_{\mathcal G}(\widetilde f) \in (\mathcal G) =
  (S)$, so
  $\widetilde f - \NF_{\mathcal G}(\widetilde f) = \sum_{i=1}^r c_i
  f_i$ with $f_i \in S$ and $c_i \in \R$. Again by \cref{c1},
  $\NF_{\mathcal G}(\widetilde f)$ is a linear combination of
  monomials~$t_1 \upto t_s$ from $M$, so
  \[
    \widetilde{f} = \sum_{i=1}^r c_i f_i + \sum_{i=1}^s a_i t_i
  \]
  with~$a_i \in K$. Applying the Reynolds operator yields
  \[
    \widetilde{f} = \sum_{i=1}^r \mathcal R(c_i) f_i + \sum_{i=1}^s
    a_i \mathcal R(t_i).
  \]
  By considering only the degree-$d$ part we may assume the
  $\mathcal R(c_i)$ to be homogeneous, so their degrees are $< d$, and
  thus $\mathcal R(c_i) \in K[S]$ by induction. So \cref{c2} is proved
  if we can show $\mathcal R(t_i) \in K[S]$ for all~$i$. Since
  the~$t_i$ lie in the set $M$ as it is after the $d$-th pass through
  the outer loop, they have {\em not} been removed in
  \cref{aKing7}. So for $t = t_i$ and $f = \mathcal R(t)$ we have
  $h = \NF_{\mathcal G}(f) = 0$. Hence it is enough to prove

  \begin{claim} \label{c3}%
    For~$f$ and~$h$ in \cref{aKing6}, we have the equivalence
    \[
      h = 0 \quad \Longleftrightarrow \quad f \in K[S],
    \]
    where $S$ denotes the set as it is in \cref{aKing6}.
  \end{claim}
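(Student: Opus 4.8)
The plan is to prove the two implications separately, letting the $d$-truncated Gr\"obner basis property from \cref{c1} carry the forward direction and the Reynolds operator carry the backward one. For $f \in K[S] \Rightarrow h = 0$: since $t$ has degree $d \ge 1$ and $G$ acts by linear substitutions, $f = \mathcal R(t)$ is homogeneous of degree~$d$, hence has zero constant term. I would first record the elementary fact that every element of $K[S]$ with zero constant term lies in the ideal $(S)$: writing such an element as a polynomial in the generators $s \in S$, the empty-product term contributes to the (vanishing) constant term and so is absent, while every remaining monomial involves some~$s$ and thus lies in $(S)$. Therefore $f \in (S) = (\mathcal G)$, and since $f - h \in (\mathcal G)$ by the definition of the normal form in \cref{aKing6}, also $h \in (\mathcal G)$. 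But $h$ is homogeneous of degree~$d$ and in normal form, so were it nonzero its leading monomial would be divisible by some $\LM(g)$ by the defining property of a $d$-truncated Gr\"obner basis (available through \cref{c1}), contradicting normality. Hence $h = 0$.

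For $h = 0 \Rightarrow f \in K[S]$: from $h = 0$ and $f - h \in (\mathcal G) = (S)$ we get $f \in (S)$, say $f = \sum_i c_i f_i$ with $f_i \in S$ and $c_i \in \R$. Here the Reynolds operator does the real work. Applying $\mathcal R$, using $\mathcal R(f) = f$ (as $f$ is already invariant) and $\mathcal R(c_i f_i) = \mathcal R(c_i) f_i$ (as each $f_i$ is invariant), gives $f = \sum_i \mathcal R(c_i) f_i$. Since every term is graded, I may pass to the homogeneous degree-$d$ component and thereby assume each $\mathcal R(c_i)$ is a homogeneous invariant of degree $d - \deg(f_i) < d$. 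This degree drop is the heart of the argument: it lets me invoke the inductive hypothesis of \cref{c2}, namely $\R^G_{\le d-1} \subseteq K[S]$, to conclude $\mathcal R(c_i) \in K[S]$ and hence $f = \sum_i \mathcal R(c_i) f_i \in K[S]$.

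The step I expect to be the main obstacle is making the logic of this second direction airtight, since \cref{c3} is not genuinely self-contained: it is one link in the single induction on~$d$ that proves \cref{c2}, and the degree-dropped terms $\mathcal R(c_i)$ are disposed of only because lower-degree invariants are already known to lie in $K[S]$. I would therefore be careful to note that $\R^G_{\le d-1} \subseteq K[S]$ persists throughout the $d$th pass, since $S$ only grows. A smaller but genuine subtlety lies in the forward direction, where one must resist appealing to the full-Gr\"obner-basis fact that every ideal element has vanishing normal form, and instead use that $h$ has degree exactly~$d$ so that the $d$-truncated basis already detects its leading monomial.
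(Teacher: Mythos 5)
Your proof is correct and follows essentially the same route as the paper: reduce the equivalence $h=0 \Leftrightarrow f\in(S)$ to properties of the $d$-truncated Gr\"obner basis from \cref{c1}, then use the Reynolds operator, homogeneity, and the induction on~$d$ from \cref{c2} to upgrade $f\in(S)$ to $f\in K[S]$. The extra details you supply (the zero-constant-term argument for $K[S]\subseteq (S)+K$ and the leading-monomial argument showing the truncation suffices in degree~$d$) are exactly what the paper leaves implicit.
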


  In fact, \cref{c1} shows that $h = 0$ is equivalent to $f \in
  (S)$. If that happens we have $f = \sum_{i=1}^r q_i f_i$ with
  $f_i \in S$ and $q_i \in \R$. We can again apply the Reynolds
  operator, assume homogeneity and use induction to show that the
  $q_i$ may be taken from $K[S]$, so $f \in K[S]$. Conversely, if
  $f \in K[S]$, then clearly $f \in (S)$ and hence $h = 0$. So
  \cref{c2,c3} are proved.
  Let us also note that the equivalence in \cref{c3} means that an
  invariant~$f$ is adjoined to $S$ only if this enlarges $K[S]$. This
  implies that throughout the algorithm, $S$ is a {\em minimal}
  generating set. Now we aim to show that the algorithm terminates,
  and that upon termination, $S$ generates $\R^G$.

  \begin{claim} \label{c4}%
    The algorithm terminates after $|G|$ passes through the outer loop
    or earlier.
  \end{claim}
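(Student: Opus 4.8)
The plan is to bound the number of outer passes by analyzing the stopping test in \cref{aKing8}: I would show that if the algorithm has not already halted, then the emptiness check must succeed once the outer loop reaches $d = |G|$. Accordingly, I would freeze the data $S$, $\mathcal G$, $M$ as they stand at the end of the $|G|$-th pass and prove that $M = \emptyset$ there.

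The heart of the argument is to identify the ideal $(S)$ at that moment with the Hilbert ideal $\I$. One inclusion is automatic: every element ever placed in $S$ is a nonconstant homogeneous invariant (each has the form $\mathcal R(t)$ with $t$ a monomial of positive degree), so $(S) \subseteq \I$ at every stage. For the reverse inclusion I would combine \cref{c2}, which gives $\R^G_{\le |G|} \subseteq K[S]$ after $|G|$ passes, with Noether's degree bound \cref{tNoether}, which says $\R^G$ is generated as an algebra by its elements of degree $\le |G|$. Together these yield $K[S] = \R^G$, and then \cref{pHilbertIdeal} converts this algebra equality into the ideal equality $(S) = \I$.

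With $(S) = \I$ established, the conclusion is short. By \cref{lHilbertIdeal}, $\I$ contains every monomial of degree $|G|$, hence so does $(S)$. Such a monomial $t$ is its own leading monomial and has degree $|G|$, so the $|G|$-truncated Gr\"obner basis property of $\mathcal G$ (maintained throughout the pass by \cref{c1}) forces $t$ to be divisible by some $\LM(g)$ with $g \in \mathcal G$. Thus no degree-$|G|$ monomial remains in $M$; that is, $M = \emptyset$, and the algorithm stops in \cref{aKing8}.

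I expect the main obstacle to be the identification $(S) = \I$ at the end of pass $|G|$. The easy inclusion $(S) \subseteq \I$ is free, but passing from the containment $K[S] \supseteq \R^G_{\le |G|}$ to the equality $K[S] = \R^G$ genuinely needs Noether's bound, and turning algebra generation into ideal generation needs \cref{pHilbertIdeal}. Once that bridge is in place, the step from $t \in (S)$ to $t$ lying in the leading-term ideal is immediate because $t$ is a monomial, and the bookkeeping $|G| \le |G|$ ensures the truncated basis already detects it.
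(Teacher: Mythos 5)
Your proof is correct, and its endgame coincides with the paper's: get every monomial of degree $|G|$ into the ideal $(S)$ via \cref{lHilbertIdeal}, then let the $|G|$-truncated Gr\"obner basis property from \cref{c1} force each such monomial to be divisible by some $\LM(g)$, emptying $M$. The difference lies in how you reach the containment of those monomials in $(S)$. You establish the full equality $(S) = \I$ by combining \cref{c2} with Noether's bound (\cref{tNoether}) to get $K[S] = \R^G$ and then applying \cref{pHilbertIdeal}. The paper instead deduces directly from \cref{c2} the weaker, degree-truncated statement that $(S)$ contains every homogeneous element of $\I$ of degree $\le d$ (a homogeneous element of $\I$ of degree $\le d$ is a combination of invariants of degree $\le d$, each of which lies in $K[S]$ and hence, being homogeneous of positive degree, in $(S)$), which already suffices. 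Your route is perfectly valid here since \cref{tNoether} is proved before the algorithm is stated, so there is no circularity; what the paper's version buys is a leaner dependency chain (only \cref{c1}, \cref{c2}, and \cref{lHilbertIdeal} are needed, not the global identification of $(S)$ with the Hilbert ideal), while yours makes explicit the pleasant fact that upon termination at $d = |G|$ the ideal $(S)$ is exactly $\I$.
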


  Indeed, \cref{c2} implies that after~$d$ passes, $(S)$ contains all
  homogeneous elements of the Hilbert ideal that have degree $\le
  d$. So if $d = |G|$, \cref{lHilbertIdeal} tells us that $(S)$
  contains all monomials of degree~$d$. Now it follows from \cref{c1}
  that all these monomials are divisible by $\LM(g)$ for some
  $g \in \mathcal G$. So using \cref{c1} again, we see that after the
  $|G|$th pass, $M$ will be empty, triggering termination.

  \begin{claim} \label{c5}%
    Upon termination, $\R^G = K[S]$.
  \end{claim}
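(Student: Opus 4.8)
The plan is to prove the two inclusions separately. The inclusion $K[S] \subseteq \R^G$ is immediate: every element adjoined to $S$ in \cref{aKing7} has the form $f = \mathcal R(t)$, and the Reynolds operator maps $\R$ into $\R^G$, so $S$ consists of invariants and hence $K[S] \subseteq \R^G$. All the substance is in the reverse inclusion $\R^G \subseteq K[S]$, and here the idea is to combine what termination tells us with \cref{c2}.

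First I would unpack the termination condition. By \cref{c4} the algorithm stops after some $d_0 \le |G|$ passes through the outer loop, with $M = \emptyset$ at that point. By \cref{c1}, $M = \emptyset$ means that every monomial of degree $d_0$ is divisible by $\LM(g)$ for some $g \in \mathcal G$, and therefore lies in $(\mathcal G) = (S)$. Since every monomial of degree $\ge d_0$ is divisible by some monomial of degree $d_0$, it too lies in $(S)$. Consequently $(S)$ contains every homogeneous polynomial of degree $\ge d_0$, whether invariant or not.

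Now I would establish $\R^G_{\le d} \subseteq K[S]$ for all $d$ by induction on~$d$. For $d \le d_0$ this is exactly \cref{c2}, applied after the final, $d_0$th pass. For the inductive step, let $d > d_0$ and let~$f$ be a homogeneous invariant of degree~$d$. By the previous paragraph $f \in (S)$, so $f = \sum_i q_i f_i$ with $f_i \in S$ and $q_i \in \R$. Applying the Reynolds operator and extracting the degree-$d$ part gives $f = \sum_i \mathcal R(q_i) f_i$ with each $\mathcal R(q_i)$ homogeneous of degree $< d$; by the induction hypothesis $\mathcal R(q_i) \in K[S]$, whence $f \in K[S]$. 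This is precisely the Reynolds trick already deployed in \cref{c3}.

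The one point that needs care—the main obstacle, such as it is—is that \cref{c2} only delivers invariants up to the termination degree $d_0$, so it does not by itself finish the proof. The observation that closes the gap is that termination forces $(S)$ to contain \emph{all} homogeneous polynomials in every degree $\ge d_0$; this is what allows the Reynolds-plus-induction argument to continue past degree $d_0$ and thereby capture every invariant of arbitrarily large degree.
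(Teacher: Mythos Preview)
Your proof is correct and uses essentially the same approach as the paper: both exploit that upon termination every monomial of degree $\ge d_0$ is covered by the leading monomials of $\mathcal G$, and then push \cref{c2} (or its Reynolds-plus-induction engine) to all degrees. The only difference is packaging: the paper argues that if the algorithm were left to keep running, $M$ would stay empty and $S$ unchanged, so \cref{c2} applies at every $d$; you instead observe that $(S)$ already contains all homogeneous polynomials of degree $\ge d_0$ and redo the induction directly.
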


  Indeed, if every monomial of degree~$d$ is divisible by some
  $\LM(g)$, then the same is true for every monomial of degree $>
  d$. This remains true if the truncated Gr\"obner basis is continued
  to a higher degree. So if the algorithm were left to keep running,
  it would henceforth always produce $M = \emptyset$ in \cref{aKing4}
  and would thus never adjoin a new invariant to $S$. So by \cref{c2},
  homogeneous invariants of any degree lie in $K[S]$, which proves the
  claim.
\end{proof}


Readers who take a look at the paper~\cite{King:2013} of King will
notice some differences. In fact, our \cref{aKing} is a modification
the original one. For one thing, it is simpler, and for another, the
original algorithm does in fact require the computation of a full
Gr\"obner basis at some point, so the previous statement that only
truncated Gr\"obner bases are used really applied to the modification
given here. Specifically, in the original algorithm the computation of
a full Gr\"obner basis of the ideal $(S)$ is triggered if for some~$d$
no invariants of that degree have been added. If $(S)$ has
dimension~$0$ at that point, the Gr\"obner basis is used to determine
a bound for termination. If it does not, a full Gr\"obner basis is
computed again at a later point. The variant presented here as
\cref{aKing} does not need such heuristics and terminates at the
same~$d$ as the original one, or earlier.

\begin{ex} \label{eD8}%
  Let us run \cref{aKing} on a group taken from Bernd's
  book~\cite[Proposition~2.2.10]{stu}. This is the symmetry group
  $G = D_8$ of a regular octagon, generated by a reflection
  $\tau = \left(\begin{smallmatrix} 1 & 0 \\ 0 &
      -1\end{smallmatrix}\right)$ and a rotation
  $\sigma = \frac{1}{\sqrt{2}}\left(\begin{smallmatrix} 1 & -1 \\ 1 &
      1\end{smallmatrix}\right)$ by an angle of $45^\circ$. $G$ has
  order~16, and plays an interesting role in coding theory, as
  explained in Bernd's book.

  Notice that $G$ contains the scalar matrix
  $\sigma^4 = \left(\begin{smallmatrix} -1 & 0 \\ 0 &
      -1\end{smallmatrix}\right)$, which has the consequence that no
  homogeneous invariant of odd degree exists. This means that for
  odd~$d$ the invariants produced by the algorithm will all be~$0$, so
  no additions to the generating set $S$ are made and we might as well
  skip those~$d$. We write~$x$ and~$y$ for the variables, and choose a
  monomial ordering with $x > y$. One further comment before we delve
  into the workings of the algorithm: in the inner loop over the
  monomials $t \in M$, it is clever to start with the least monomial
  because that gives a better chance that in \cref{aKing7} a monomial
  will be taken out of $M$ that has not been treated yet. Now here is
  what the algorithm does, rendered in telegraphic style, with the
  computations not shown in detail but left to the computer.
  \begin{description}
  \item[$\mathbf{d = 2}$] As initialized,
    $S = \mathcal G = \emptyset$, so $M := \{x^2,x y,y^2\}$.
    \begin{description}
    \item[$\mathbf{t = y^2}$]
      $f := \mathcal R(t) = \frac{1}{2} (x^2 + y^2) =: f_2$, $h :=
      f$. So we update $S := \{f_2\}$, $\mathcal G := \{f_2\}$, and
      $M := \{x y,y^2\}$.
    \item[$\mathbf{t = x y}$] $f := \mathcal R(t) = 0$.
    \end{description}
  \item[$\mathbf{d = 4}$] As before, $S = \mathcal G = \{f_2\}$ with
    leading monomial~$x^2$, so $M := \{x y^3,y^4\}$.
    \begin{description}
    \item[$\mathbf{t = y^4}$] $f := \mathcal R(t)$ is divisible
      by~$f_2$, so has normal form $h := 0$.
    \item[$\mathbf{t = x y^3}$] $f := \mathcal R(t) = 0$.
    \end{description}
  \item[$\mathbf{d = 6}$] As before, $S = \mathcal G = \{f_2\}$, so
    $M := \{x y^5,y^6\}$.
    \begin{description}
    \item[$\mathbf{t = y^6}$] $f := \mathcal R(t)$ is divisible
      by~$f_2$, so has normal form $h := 0$.
    \item[$\mathbf{t = x y^5}$] $f := \mathcal R(t) = 0$.
    \end{description}
  \item[$\mathbf{d = 8}$] As before, $S = \mathcal G = \{f_2\}$, so
    $M := \{x y^7,y^8\}$.
    \begin{description}
    \item[$\mathbf{t = y^8}$]
      $f := \mathcal R(t) = \frac{1}{32}(9x^8 + 28 x^6 y^2 + 70 x^4
      y^4 + 28 x^2 y^6 + 9 y^8) =: f_8$ has normal form $h := y^8$.
      So we update $S := \{f_2,f_8\}$, $\mathcal G := \{f_2,y^8\}$.
    \item[$\mathbf{t = x y^7}$] $f := \mathcal R(t) = 0$.
    \end{description}
  \item[$\mathbf{d = 9}$] (Even though we have until now skipped odd
    degrees, we do look at $d = 9$ since the computer, oblivious of
    our above argument about odd degrees, would do so, too.)
    $\mathcal G = \{f_2,y^8\}$ is a Gr\"obner basis, and therefore
    also a $9$-trunceated Gr\"obner basis, with leading
    monomials~$x^2$ and~$y^8$, so now $M := \emptyset$. This means
    that the algorithm terminates here.
  \end{description}
  In this example, the algorithm terminates at $d = 9$, one degree
  after the last invariant has been added, but before reaching
  $d = |G|$. It has found generating invariants~$f_2$ and~$f_8$, which
  may be replaced by
  \[
    g_2 = 2 f_2 = x^2 + y^2 \quad \text{and} \quad g_8 := 18 f_2^4 - 4
    f_8 = x^2 y^2 (x^2 - y^2)^2
  \]
  These are exactly the generating invariants given in Bernd's
  book. The invariant $g_8$ happens to be~$4$ times the product over
  the orbit of~$x$.
\end{ex}

\section{Modular invariants of finite groups} \label{sModular}

In this section we assume that the characteristic $p = \ch(K)$ divides
the group order $|G|$. Thus we no longer have a Reynolds
operator. So chaos is to be expected, and indeed many pleasant
features of nonmodular invariant theory break down.

The first victim is Noether's degree bound. Perhaps the easiest
example where it fails is the cyclic group $C_2$ of order~$2$ acting
on the polynomial ring \linebreak $\FF_2[x_1,x_2,x_3,y_1,y_2,y_3]$ by
interchanging the $x_i$ and $y_i$. An explicit calculation showing
that the invariant ring is not generated in degrees~$\le 2$ is given
in \cite[Example~3.3.1]{Derksen:Kemper:2015}, so we direct interested
readers there. Worse, Richman~\cite{rich:b} showed that in the modular
case there cannot exist any degree bound that only depends on
$|G|$. Reasonable degree bounds for the modular case were long elusive
(an unreasonably large bound appeared
in~\cite[Theorem~3.9.11]{Derksen:Kemper}), until in 2011 Peter
Symonds~\cite{Symonds:2011} proved a bound that had been conjectured
for a while. His paper tells us that if $n \ge 2$ is the number of
variables and $|G| \ge 2$, then $\R^G$ is generated by homogeneous
invariants of degrees $\le n \bigl(|G| - 1\bigr)$. The proof makes
heavy use of the paper~\cite{KS:02}, which itself establishes the
fascinating result that the polynomial ring $\R$, viewed as a direct
sum of infinitely many indecomposable $KG$-modules, only contains a
finite number of isomorphism types of such modules. As the title
of~\cite{Symonds:2011} suggests, it not only provides a bound on the
generators of $\R^G$ but also on the algebraic relations between
them. For an (even) better appreciation of Symonds's bound, and for
understanding why it qualifies as ``reasonable,'' it should be pointed
out that it is really a bound on secondary invariants.

This brings us to the topic of primary and secondary invariants, which
we already mentioned in passing before presenting King's algorithm. As
they are covered in Bernd's book, we will just briefly recall them
here. {\bf Primary invariants}, which owe their existence to Noether
normalization, are homogeneous invariants $f_1 \upto f_n$ (again
with~$n$ the number of variables) such that $\R^G$ is finitely
generated as a module over the subalgebra $A := K[f_1 \upto f_n]$
generated by them. After having chosen primary invariants, we call
homogeneous generators $g_1 \upto g_m$ of $\R^G$ as an $A$-module {\bf
  secondary invariants}. Together, the primary and secondary
invariants form a generating system of $\R^G$ as an algebra. Now what
Symonds's bound really says is that if the secondary invariants are
chosen to {\em minimally} generate $\R^G$ as an $A$-module, then
\begin{equation} \label{eqSymonds}%
  \deg(g_i) \le \sum_{i=1}^n (\deg(f_i) - 1).
\end{equation}
A typical example are the invariants of the alternating group $A_n$
acting naturally by permuting the~$x_i$, which are treated in Bernd's
book~\cite[Proposition~1.1.3]{stu}. In this case the elementary
symmetric polynomials can be taken as primary invariants and, in
characteristic $\ne 2$, secondary invariants are given by $g_1 = 1$
and $g_2 = \prod_{i < j} (x_i - x_j)$. So here Symonds's bound for
secondary invariants is sharp, as it quite often is. It is, however,
seldom sharp as a bound for algebra generators, since typically the
secondary invariant(s) of top degree can be chosen as a product of
lower-degree secondary invariants. But precisely because it is a bound
on secondary invariants, it is extremely useful for computations,
since in the modular case the state-of-the-art algorithms for
computing $\R^G$ do proceed by consecutively computing primary and
secondary invariants.

How does this lead to the bound $n\bigl(|G| - 1\bigr)$ stated above?
To see this, we need to recall how primary invariants are
characterized geometrically: homogeneous invariants $f_1 \upto f_n$
are primary invariants if and only if the affine variety
$\Var_{\overline{K}^n}(f_1 \upto f_n)$ over an algebraic closure of
$K$ defined by them consists of the point $(0 \upto 0)$ alone (see,
for example, \cite[Proposition~5.3.7]{lsm}).
We will often use the less context-specific term {\bf homogeneous
  system of parameters} ({\bf hsop}) for the set $\{f_1 \upto f_n\}$.
Also notice that an equivalent condition to the above criterion is
that $\Var_{\overline{K}^n}(f_1 \upto f_n)$ is zero-dimensional.  This
can be refined: a set $\{f_1 \upto f_k\}$ of homogeneous polynomials,
with $k \le n$, can be extended to an hsop if and only if the variety
it defines has dimension $n - k$, the least dimension possible. In
this case we speak of a {\bf partial homogeneous system of parameters}
({\bf phsop}). \label{phsop} It is these geometric tools that are at
the heart of algorithms for constructing primary invariants, as those
given in Bernd's book~\cite[Section~2.5]{stu} or the
refinement~\cite{kem:g}, which still seems to be the state of the art.

In particular, at least if the field $K$ has enough elements, we can
form primary invariants by taking orbit products of linear forms, if
only these forms are chosen ``in general position.'' This method is
referred to as {\em Dade's algorithm}, and more details can be found
in Bernd's book \cite[Subroutine~2.5.12]{stu}. This brings us back to
Symonds's bound. The primary invariants constructed by Dade's
algorithm have degree $\le |G|$, so the bound $n \bigl(|G| - 1\bigr)$
follows directly from~\cref{eqSymonds}. What about the assumption that
$K$ has enough elements?  This is actually a without-loss assumption,
since it can be shown that the maximal degree of a minimal generating
system of $\R^G$ does not change when the ground field $K$ is
extended. \\

Having reported progress on degree bounds in the modular case, let us
now address a different issue, the Cohen-Macaulay property. In the
case of invariant rings, this property can be defined in a very simple
way, which also makes it immediately clear that it is a property worth
having: $\R^G$ is Cohen-Macaulay if it is free as a module over the
subalgebra generated by some (or, equivalently, every) hsop. Now one
of the prominent results in nonmodular invariant theory is that in the
nonmodular case, $\R^G$ always is Cohen-Macaulay (see Bernd's
book~\cite[Theorem~2.3.5]{stu}). And once again, this often breaks
down in the modular case. At the time of writing Bernd's book, results
on the Cohen-Macaulay property in modular invariant theory were rather
haphazard: there were just a few examples known where the
Cohen-Macaulay property failed, and a few others where it held. The
only notable (and, indeed very remarkable) result was the
paper~\cite{ES} by Ellingsrud and Skjelbred, which answers the
question completely in the case of cyclic $p$-groups. Later, Campbell,
Geramita, Hughes, Shank, and Wehlau \cite{CGHSW:b} considered the case
of {\em vector invariants}, which means that a given linear action of
a group $G$ is replicated on several sets of variables; so if
$\sigma(x_i) = a_{1,i} x_1 + \cdots + a_{n,i} x_n$, then the action on
the new variables $x_{i,j}$ is by
$\sigma(x_{i,j}) = a_{1,i} x_{1,j} + \cdots + a_{n,i} x_{n,j}$,
for~$j$ between~$1$ and some~$k$. An example with $k = 3$ is the
action of $C_2$ at the beginning of the section. The result
in~\cite{CGHSW:b} says that if $G$ is a $p$-group acting nontrivially
and if $k \ge 3$, then the ring of vector invariants is not
Cohen-Macaulay. In fact, the $C_2$-action just recalled is not only
the most accessible example where Noether's bound fails, but also
where the Cohen-Macaulay property fails. The following result came out
later in the same year as~\cite{CGHSW:b}.

\begin{theorem}[Kemper~\cite{kem:h}] \label{tCM}%
  If $\R^G$ is Cohen-Macaulay, then $G$ is generated by $p'$-elements
  (i.e., by elements of order not divisible by~$p$) and by
  bireflections (i.e., by elements that fix a subspace of
  codimension~$2$).
\end{theorem}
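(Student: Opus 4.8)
The plan is to prove the contrapositive. Write $V=\overline K^{\,n}$ for the space on which $G$ acts, and for $\sigma\in G$ let $V^{\sigma}=\{v\in V:\sigma v=v\}$, so that $\sigma$ is a bireflection exactly when $\codim V^{\sigma}\le 2$. Let $N\trianglelefteq G$ be the subgroup generated by all $p'$-elements and all bireflections; it is normal because conjugation preserves both the order of an element and the dimension of $V^{\sigma}$. Writing an arbitrary $g\in G$ as the product of its commuting $p$- and $p'$-parts shows that its $p'$-part lies in $N$, so every element of $G/N$ has $p$-power order and $G/N$ is a finite $p$-group. Assume $N\ne G$. Then $G/N$ is a nontrivial $p$-group, hence has a subgroup of index $p$, which pulls back to a normal subgroup $H\trianglelefteq G$ of index $p$ containing $N$. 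Pick $\sigma\in G\setminus H$; since the $p'$-part of $\sigma$ lies in $N\subseteq H$, its $p$-part is again outside $H$, so we may take $\sigma$ to be a $p$-element with $G/H=\langle\sigma H\rangle\cong C_p$. As $H\supseteq N$ contains every bireflection, $\sigma$ is not one, i.e.\ $\codim V^{\sigma}\ge 3$. It remains to show that such a configuration forces $\R^{G}$ to fail to be Cohen-Macaulay.

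For the cyclic $p$-group $P=\langle\sigma\rangle$ the claim is immediate: by the Ellingsrud--Skjelbred analysis of cyclic $p$-groups~\cite{ES} one has $\operatorname{depth}\R^{P}=\min\bigl(n,\dim V^{\sigma}+2\bigr)=\dim V^{\sigma}+2\le n-1$, so $\R^{P}$ is \emph{not} Cohen-Macaulay. The entire difficulty is therefore to propagate non-Cohen-Macaulayness from the subgroup $P$ to $\R^{G}$ itself. This cannot be done at the level of rings directly, since $\R^{G}\subseteq\R^{P}$ and the smaller ring being bad says, a priori, nothing about the larger one.

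My approach to the propagation is through the depth--cohomology dictionary for $\R^{G}$ developed in~\cite{kem:h}, which expresses $\operatorname{depth}\R^{G}$ in terms of the group cohomology modules $H^{j}(G,\R)$. I would locate a nonzero class detecting $\sigma$: the restriction $H^{j}(G,\R)\to H^{j}(P,\R)$, together with the Ellingsrud--Skjelbred description of the right-hand side, should pin down a cohomology class of $G$ whose cohomological degree and support, via the dictionary, force $\operatorname{depth}\R^{G}<n$. The serious obstacle is that $H$, and hence $G$, may contain many honest (bi)reflections---all lying in $H$---whose fixed hyperplanes contribute to $H^{\ast}(G,\R)$ with large support and would, in any naive global estimate, overwhelm the contribution of $\sigma$. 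To isolate $\sigma$ I would localize at the prime $\mathfrak p=\Id(V^{\sigma})$, of height $\codim V^{\sigma}\ge 3$: being Cohen-Macaulay is a local property, and at the contracted prime $\mathfrak p\cap\R^{G}$ only the inertia group $\{g\in G:\ g|_{V^{\sigma}}=\mathrm{id}\}$ survives, discarding every reflection whose mirror does not contain $V^{\sigma}$. The crux---and the step I expect to be hardest---is to carry out the depth--cohomology computation in this localized, totally ramified situation and to show that the reflections still present inside the inertia group cannot cancel the obstruction coming from $\sigma$, so that the inequality $\operatorname{depth}<\operatorname{height}$ survives and $\R^{G}$ fails to be Cohen-Macaulay.
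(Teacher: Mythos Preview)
Your opening reduction---forming the normal subgroup generated by $p'$-elements and bireflections, observing that the quotient is a $p$-group, and pulling back a subgroup of index~$p$---matches the paper's \cref{lN} exactly. The divergence, and the gap, is in what comes next.

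You propose to show $\R^P$ is not Cohen-Macaulay via Ellingsrud--Skjelbred and then \emph{propagate} this to $\R^G$ by a depth--cohomology dictionary plus localization at $\mathfrak p=\Id(V^\sigma)$. But you explicitly leave the crucial step unfinished (``the step I expect to be hardest''), and the obstacle you name is real: the inertia group at $\mathfrak p$ can be much larger than $\langle\sigma\rangle$ and may contain bireflections whose fixed spaces contain $V^\sigma$, and nothing in your sketch explains why the obstruction survives. Moreover, the depth--cohomology machinery you invoke is precisely that of~\cite{kem:h}, the paper that proves the theorem, so the argument risks circularity.

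The paper avoids all of this with a short, explicit, cohomology-free construction. With $H\trianglelefteq G$ your index-$p$ subgroup, it works with the relative trace ideal $I_{G/H}=\Tr_{G/H}(\R^H)\subseteq\R^G$. One checks that $\Var(I_{G/H})=\bigcup_{\tau\in G\setminus H}V^\tau$, which has codimension $\ge 3$ since $H$ contains every bireflection; hence $I_{G/H}$ contains a phsop $f_1,f_2,f_3$. The endgame is a concrete relation: a telescoping identity in characteristic~$p$ gives $f_i=\sigma(g_i)-g_i$ with $g_i\in\R^H$, the elements $u_{i,j}:=f_ig_j-f_jg_i$ turn out to be $G$-invariant, and the determinantal relation $f_1u_{2,3}-f_2u_{1,3}+f_3u_{1,2}=0$ holds. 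If $f_1,f_2,f_3$ were $\R^G$-regular, unwinding this would force $\sigma(d)-d=1$ for some $d\in\R$, which is impossible. No localization, no appeal to~\cite{ES}, no cohomological bookkeeping.
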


The term bireflection is modeled after the concept of a reflection,
which is a linear transformation fixing a subspace of
codimension~1. Notice that ``fixing a subspace of codimension~$2$''
allows that the fixed space actually has smaller codimension; so in
particular every reflection is also a bireflection. On the other hand,
if we consider vector invariants of $k \ge 3$ sets of variables (the
case of~\cite{CGHSW:b}), then the only bireflection is the identity;
so~\cite{CGHSW:b} is contained in \cref{tCM}. It should be mentioned
that, unfortunately, the converse of \cref{tCM} is not true, and the
quest for a group or representation theoretic if-and-only-if criterion
for the Cohen-Macaulay property is still one of the holy grails in
modular invariant theory.

The paper~\cite{kem:h} proves \cref{tCM} within a broader framework,
so it may be worthwhile to present a proof here that is shorter and
more streamlined, and thus better suited to make the arguments
explicit. As we go along in the proof, we will introduce the relative
trace map and recall the concept of a regular sequence. Both are
interesting in themselves.

This is an outline of the proof: We assume that $G$ is {\em not}
generated by $p'$-elements and bireflections, and derive the existence
of a certain normal subgroup $N$ in \cref{lN}. \cref{lTrace} then
reveals the variety defined by the image $I$ of the relative trace
with respect to $N$, and \cref{lPhsop} tells us that $I$ contains a
phsop (see on \cpageref{phsop}) of length~$3$. Finally in
\cref{lEndgame}, where the endgame of the proof plays out, we show
that such a phsop is not a regular sequence, implying that $\R^G$ is
not Cohen-Macaulay. The first lemma is purely group theoretic and
holds, like everything else, under the assumption that $G$ is not
generated by $p'$-elements and bireflections.

\begin{lemma} \label{lN}%
  There is a normal subgroup $N \subset G$ of index~$p$ that contains
  all bireflections.
\end{lemma}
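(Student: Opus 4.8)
The plan is to realize $N$ as the preimage of an index-$p$ subgroup under a suitable quotient map. First I would let $U \le G$ be the subgroup generated by all $p'$-elements together with all bireflections. Since a conjugate of a $p'$-element is again a $p'$-element, and a conjugate of a bireflection again fixes a subspace of codimension $2$, both generating sets are closed under conjugation, and hence $U$ is normal in $G$. The standing hypothesis that $G$ is not generated by $p'$-elements and bireflections says precisely that $U \ne G$. It therefore suffices to produce a normal subgroup $N$ of index $p$ with $U \subseteq N$, since such an $N$ automatically contains every bireflection.

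The heart of the argument is the claim that the nontrivial quotient $G/U$ is a $p$-group. To see this, I would take any $g \in G$ and write its order as $p^a m$ with $\gcd(m,p) = 1$. Decomposing the cyclic group $\langle g \rangle$ yields commuting powers $g = g_p\, g_{p'}$ of $g$, where $g_p$ has order $p^a$ and $g_{p'}$ has order $m$. As $g_{p'}$ is a $p'$-element it lies in $U$, so the image of $g$ in $G/U$ coincides with the image of $g_p$ and hence has $p$-power order. A finite group in which every element has $p$-power order is a $p$-group by Cauchy's theorem, so $G/U$ is a nontrivial $p$-group.

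To finish, I would invoke the standard fact that every maximal subgroup of a nontrivial finite $p$-group is normal of index $p$. Applying this to $G/U$ and letting $N \subseteq G$ be the preimage of such a maximal subgroup under the quotient map $G \to G/U$, I obtain a subgroup $N$ that is normal in $G$, has index $p$, and contains $U$, and hence all bireflections, as required.

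The only real obstacle is the middle step, namely recognizing that $G/U$ is a $p$-group, and the key to it is the observation that the $p'$-part of every group element already lies in $U$. Everything afterwards is textbook finite group theory, and the standing hypothesis enters only to guarantee that $U$ is a proper subgroup, so that the $p$-group $G/U$ is nontrivial and an index-$p$ subgroup exists.
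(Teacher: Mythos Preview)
Your proof is correct and follows essentially the same route as the paper: define the normal subgroup generated by all $p'$-elements and bireflections, observe that the quotient is a nontrivial $p$-group, and pull back an index-$p$ normal subgroup. You supply a bit more detail (the explicit decomposition $g = g_p g_{p'}$ and the justification of normality via conjugation-closed generating sets), but the strategy and key ideas are identical.
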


\begin{proof}
  The subgroup $N_0 \subseteq G$ generated by all $p'$-elements and
  bireflections is normal. Since the order of every element from
  $G/N_0$ is a power of~$p$, $G/N_0$ is a $p$-group. By assumption, it
  is nontrivial, so it has a subgroup $N/N_0$ of index~$p$. But a
  subgroup of index~$p$ in a $p$-group is always normal (see
  \cite[Kapitel~I, Satz~8.9]{hup}).
\end{proof}

Since in modular invariant theory we cannot average over the group,
what is left to do is just summing. The resulting map $\R \to \R^G$ is
called the {\bf trace map} or {\bf transfer}. It has a relative
version: for $H \subseteq G$ a subgroup, the {\bf relative trace} (or
{\bf relative transfer}) is the map
\[
  \Tr_{G/H}\mbox{:} \ \R^H \to \R^G, \ f \mapsto \sum_{\sigma \in G/H}
  \sigma(f),
\]
where the summation is over a set of left coset representatives. This
is surjective if $p \nmid [G:H]$, but how big is its image (which is
an ideal in $\R^G$) otherwise? An answer in geometric terms (i.e.,
determining the radical ideal of the image) can be found
in~\cite[Lemma~1.1]{Lorenz.Pathak}. What we need here is the special
case $H = N$ with $N$ from \cref{lN}. For simplicity (and since it is
an assumption that can be made without loss of generality) we will
assume $K$ to be algebraically closed. The following lemma determines
the variety in $V := K^n$ defined by the ideal
\[
  I_{G/N} := \Tr_{G/N}\bigl(\R^N\bigr) \subseteq \R^G.
\]

\begin{lemma} \label{lTrace}%
  $\Var_V(I_{G/N}) = \bigcup_{\sigma \in G \setminus N} V^\sigma$. In
  particular, since all bireflections are contained in $N$, the
  variety has dimension $\le n - 3$.
\end{lemma}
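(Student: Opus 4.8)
The plan is to establish the set equality by proving both inclusions and then to read off the dimension bound from the fact that $N$ contains all bireflections. From \cref{lN} we have $p = \ch(K) = [G:N]$, so $G/N$ is cyclic of prime order~$p$ and any $\tau \in G \setminus N$ has image generating $G/N$; thus $\{1,\tau\upto\tau^{p-1}\}$ is a set of left coset representatives. Writing the action on points as $(\sigma f)(v) = f(\sigma^{-1}v)$, the relative trace becomes $\Tr_{G/N}(f)(v) = \sum_{i=0}^{p-1} f(\tau^{-i}v)$ for $f \in \R^N$, and since $f$ is $N$-invariant this value does not depend on the chosen representatives.

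For the inclusion $\bigcup_{\sigma \in G\setminus N} V^\sigma \subseteq \Var_V(I_{G/N})$, I would take a point $v \in V^\sigma$ with $\sigma \notin N$ and use the powers of $\sigma$ itself as coset representatives. Because $\sigma v = v$, every summand $f(\sigma^{-i}v)$ equals $f(v)$, so $\Tr_{G/N}(f)(v) = p\cdot f(v) = 0$ for all $f \in \R^N$. Hence every element of $I_{G/N}$ vanishes at $v$, giving $v \in \Var_V(I_{G/N})$.

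The reverse inclusion is the heart of the matter, and I would prove it contrapositively: assuming $v$ is fixed by no element of $G \setminus N$, I must exhibit $f \in \R^N$ with $\Tr_{G/N}(f)(v) \ne 0$. First I would check that the points $w_j := \tau^{-j}v$ for $j = 0\upto p-1$ lie in $p$ distinct $N$-orbits; indeed, if $w_j$ and $w_k$ were $N$-equivalent with $j \ne k$, then normality of $N$ shows that some element of the nontrivial coset $\tau^{j-k}N \subseteq G\setminus N$ fixes $v$, contradicting the hypothesis. Then I would set $Z := \bigcup_{j\ge 1} N w_j$, a finite $N$-stable set disjoint from the orbit $N w_0$, interpolate a polynomial $g \in \R$ with $g \equiv 1$ on $N w_0$ and $g \equiv 0$ on $Z$, and form $f := \prod_{n \in N} n(g)$. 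This $f$ is manifestly $N$-invariant, and evaluating gives $f(w_0) = 1$ and $f(w_j) = 0$ for $j \ge 1$, whence $\Tr_{G/N}(f)(v) = 1 \ne 0$ and $v \notin \Var_V(I_{G/N})$.

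The step I expect to be the main obstacle is exactly this construction of an $N$-invariant with prescribed values on the $w_j$: since we are in the modular case there is no Reynolds operator for $N$ with which to average a separating polynomial into an invariant one, so averaging must be replaced by the multiplicative trick above, which works precisely because $Z$ and $N w_0$ are $N$-stable. With the equality $\Var_V(I_{G/N}) = \bigcup_{\sigma \in G\setminus N} V^\sigma$ in hand, the ``in particular'' is immediate: by the choice of $N$ in \cref{lN} no $\sigma \in G\setminus N$ is a bireflection, so $\codim V^\sigma \ge 3$, i.e.\ $\dim V^\sigma \le n-3$; as the variety is the finite union of these linear subspaces $V^\sigma = \ker(\sigma - \mathrm{id})$, its dimension is at most $n-3$.
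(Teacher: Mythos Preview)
Your proof is correct and follows essentially the same route as the paper's. The paper phrases the reverse inclusion by observing that the sets $\{\sigma(v)\mid\sigma\in G\setminus N\}$ and $\{\tau(v)\mid\tau\in N\}$ are disjoint, interpolating a polynomial that is~$0$ on the first and~$1$ on the second, and then taking the product over $N$ to force $N$-invariance; your partition into the $N$-orbits $Nw_0$ versus $Z=\bigcup_{j\ge 1}Nw_j$ is the same partition (since $\bigcup_{j\ge1}N\tau^{-j}v=(G\setminus N)v$ by normality), and your product trick is identical to the paper's. A tiny slip: the element fixing $v$ that you produce lies in $\tau^{k-j}N$ rather than $\tau^{j-k}N$, but either coset is nontrivial, so nothing is affected.
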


\begin{proof}
  For the inclusion ``$\supseteq$'' assume a vector $v \in V$ is fixed
  by some $\sigma \in G \setminus N$. The class of $\sigma$ generates
  $G/N$, so for $f \in \R^N$ we obtain
  \[
    \bigl(\Tr_{G/N}(f)\bigr)(v) = \sum_{i=0}^{p-1}
    \bigl(\sigma^i(f)\bigr)(v) = \sum_{i=0}^{p-1}
    f\bigl(\sigma^{-i}(v)\bigr) = \sum_{i=0}^{p-1} f(v) = 0,
  \]
  since $K$ has characteristic~$p$. For the reverse inclusion, assume
  $\sigma(v) \ne v$ for all $\sigma \in G \setminus N$, so the sets
  $\{\sigma(v) \mid \sigma \in G \setminus N\}$ and
  $\{\tau(v) \mid \tau \in N\}$ are disjoint. By interpolation, we can
  find a polynomial $f \in \R$ that is constantly zero on the first
  set but constantly~$1$ on the second. Replacing~$f$ by
  $\prod_{\tau \in N} \tau(f)$, we may assume $f \in \R^N$. Then
  $\bigl(\Tr_{G/N}(f)\bigr)(v) = 1$, so $v \notin \Var_V(I_{G/N})$.
\end{proof}

\begin{lemma} \label{lPhsop}%
  $I_{G/N}$ contains a phsop of length~3.
\end{lemma}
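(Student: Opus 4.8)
The plan is to deduce this formally from \cref{lTrace}. Since the relative trace $\Tr_{G/N}$ preserves degrees, $I_{G/N}$ is a homogeneous ideal, which we regard as a homogeneous ideal of $\R$; it is nonzero, since otherwise $\Var_V(I_{G/N})$ would be all of $V$, contradicting the bound $\dim \Var_V(I_{G/N}) \le n-3$ supplied by \cref{lTrace}. (Note that the standing assumption forces $n \ge 3$: if $n \le 2$, then every element of $G$ fixes the subspace $\{0\}$, which has codimension $\le 2$, so every element is a bireflection and $G$ would be generated by bireflections.) I would construct the phsop $h_1,h_2,h_3 \in I_{G/N}$ greedily, maintaining as a loop invariant that $\Var_V(h_1 \upto h_j)$ is \emph{pure} of dimension $n-j$ for $j = 0,1,2,3$; the case $j = 3$ is exactly the phsop condition, and $j = 0$ (the empty intersection equals $V$) is the base case.

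For the inductive step, suppose homogeneous $h_1 \upto h_j \in I_{G/N}$ have been found with $W := \Var_V(h_1 \upto h_j)$ pure of dimension $n-j$, where $j \le 2$, and let $W_1 \upto W_r$ be the irreducible components of $W$. The key observation is that $I_{G/N} \not\subseteq \Id(W_i)$ for each $i$: otherwise $W_i \subseteq \Var_V(I_{G/N})$, forcing $n-j = \dim W_i \le n-3$, impossible for $j \le 2$. Thus $I_{G/N}$ escapes each of the finitely many homogeneous primes $\Id(W_1) \upto \Id(W_r)$, and it is precisely the dimension bound from \cref{lTrace} that makes this work.

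Next I would invoke a graded version of prime avoidance to produce a single homogeneous $h_{j+1} \in I_{G/N}$ lying in none of the $\Id(W_i)$. Concretely, take homogeneous generators $g_1 \upto g_s$ of $I_{G/N}$, replace each by a suitable power so that they all acquire a common degree (this stays inside $I_{G/N}$ and, as the $\Id(W_i)$ are prime, preserves the avoidance property), and set $h_{j+1} = \sum_l c_l g_l^{e_l}$ for a generic $K$-linear combination. For each $i$ the condition $h_{j+1} \in \Id(W_i)$ cuts out a proper linear subspace of the coefficient space $K^s$, and since $K$ is infinite these finitely many subspaces do not cover $K^s$, so a generic choice avoids them all. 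Because $h_{j+1}$ then restricts to a nonzero function on each irreducible $W_i$, Krull's principal ideal theorem applied in $\R/\Id(W_i)$ shows $W_i \cap \Var_V(h_{j+1})$ is pure of dimension $(n-j)-1$; taking the union over $i$ gives that $\Var_V(h_1 \upto h_{j+1})$ is pure of dimension $n-(j+1)$, completing the induction. Running this for $j = 0,1,2$ yields the desired phsop in $I_{G/N}$.

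The argument is essentially formal once \cref{lTrace} is in hand; the only genuine content is the bookkeeping that keeps each successive intersection pure-dimensional. The step I expect to require the most care is the inductive dimension count: verifying through Krull's Hauptidealsatz that cutting with an element not vanishing on any component lowers the dimension by exactly one and does not introduce components of the wrong dimension. Everything else is dictated by the bound $\dim \Var_V(I_{G/N}) \le n-3$.
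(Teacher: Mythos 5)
Your proof is correct, but it takes a genuinely different route through the key step than the paper does. The paper proves the same statement (in the general form recorded in \cref{rPhsop}) by applying prime avoidance to the \emph{extended} ideal $J := \R \cdot I_{G/N}$ inside $\R$; the element $g$ this produces avoids the minimal primes but lies only in $J$, not in $I_{G/N}$, so the paper must then pass to the orbit product $\prod_{\sigma \in G} \sigma(g)$ (using that $G$ permutes the minimal primes) and, in the most delicate step, verify via a $G$-equivariant specialization $\R[t_1 \upto t_r] \to \R$ that this product really lands in $I_{G/N}$ and not merely in $\R \cdot I_{G/N} \cap \R^G$. You sidestep all of that by performing the avoidance \emph{inside} $I_{G/N}$ itself: a generic $K$-linear combination of equal-degree powers of homogeneous elements of $I_{G/N}$ is manifestly an element of $I_{G/N}$ (powers and $K$-linear combinations stay in the ideal of $\R^G$), and it avoids each prime $\Id(W_i)$ because the bad coefficient vectors form finitely many proper subspaces of $K^s$. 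This makes the orbit-product and auxiliary-variable machinery unnecessary and is arguably cleaner. The price is that your generic-combination form of graded prime avoidance needs $K$ infinite, which is harmless here since $K$ has already been assumed algebraically closed, whereas the paper's citation of \cite{eis} is field-agnostic. Two small remarks: your invocation of finitely many homogeneous generators of $I_{G/N}$ implicitly uses that $\R^G$ is Noetherian (true by Noether's finiteness theorem, and in any case you only need one homogeneous element of $I_{G/N}$ outside each $\Id(W_i)$); and your greedy construction proves the extension statement of \cref{rPhsop} just as well, which is what is actually reused later in the paper.
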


\begin{proof}
  We will prove the following, more general statement, using induction
  on~$k$: if $I \subseteq \R^G$ is a homogeneous ideal such that
  $\Var_V(I)$ has dimension $\le n - k$, then $I$ contains a phsop of
  length~$k$. There is nothing to show for $k = 0$, so we can assume
  $k \ge 1$ and, by induction, that we have already found a phsop
  $f_1 \upto f_{k-1} \in I$. This means that the prime ideals
  $Q_1 \upto Q_m$ in $\R$ that minimally lie over
  $(f_1 \upto f_{k-1})$ all have dimension $n - k + 1$ (some readers
  may prefer to say that the height is~$k - 1$). So by hypothesis, the
  ideal $J := \R \cdot I$ generated by $I$ is contained in none of the
  $Q_i$, so by prime avoidance (see~\cite[Lemma~3.3]{eis}), there is a
  homogeneous $g \in J$ with $g \notin Q_i$ for all~$i$. Since the
  $G$-action permutes the $Q_i$, this is also true for all
  $\sigma(g)$, so also the product
  $f_k := \prod_{\sigma \in G} \sigma(g)$ avoids all $Q_i$. This
  implies that $f_1 \upto f_k$ are a phsop.

  So what is left to show is $f_k \in I$. Since $g \in J$ we have
  $g = \sum_{i=1}^r h_i g_i$ with $g_i \in I$ and $h_i \in \R$. With
  $t_1 \upto t_r$ new variables, we have a $G$-equivariant map $\phi$:
  $\R[K][t_1 \upto t_r] \to \R$ of $\R$-algebras sending~$t_i$ to~$g_i$
  (with trivial $G$-action on the~$t_i$). So
  $\phi(h_1 t_1 + \cdots + h_r t_r) = g$ and hence
  \[
    \phi\left(\prod_{\sigma \in G} \sigma(h_1 t_1 + \cdots + h_r
      t_r)\right) = \prod_{\sigma \in G} \sigma(g) = f_k.
  \]
  But the product of the $\sigma(h_1 t_1 + \cdots + h_r t_r)$ lies in
  $\bigl(\R[K][t_1 \upto t_r]\bigr)^G = \linebreak \R^G[t_1 \upto
  t_r]$, so applying~$\phi$ to it yields an element of $I$ since
  $g_i \in I$. This completes the proof.
\end{proof}

\begin{rem} \label{rPhsop}%
  In the above proof we really showed the following statement: every
  phsop in a homogeneous ideal $I \subseteq \R^G$ with
  $\dim\bigl(\Var_V(I)\bigr) \le n - k$ can be extended to a phsop in
  $I$ of length~$k$.
\end{rem}

Before presenting the final step in the proof of \cref{tCM} we need to
recall the concept of a regular sequence. In our context this can be
defined as a sequence $f_1 \upto f_k \in \R^G$ of nonconstant,
homogeneous invariants such that for each~$i$, the map
$\R^G/(f_1 \upto f_{i-1}) \to \R^G/(f_1 \upto f_{i-1})$ given by
multiplication with~$f_i$ is injective. It is easily seen that if
$\R^G$ is Cohen-Macaulay, then every phsop is a regular sequence. (The
converse also holds, but we do not need it here.) In particular, every
phsop is an $\R$-regular sequence, i.e., regular when regarded as a
sequence in $\R$. It is interesting that for proving the following
lemma, the $\R$-regularity is actually used to show that
$\R^G$-regularity fails.

\begin{lemma} \label{lEndgame}%
  A phsop $f_1,f_2,f_3 \in I_{G/N}$ of length~3 is not a regular
  sequence. Therefore $\R^G$ is not Cohen-Macaulay.
\end{lemma}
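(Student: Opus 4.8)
The plan is to exhibit an explicit homogeneous syzygy $A_1 f_1 + A_2 f_2 + A_3 f_3 = 0$ with coefficients $A_i \in \R^G$ for which $A_3 \notin (f_1,f_2)_{\R^G}$. This immediately shows that multiplication by $f_3$ is not injective on $\R^G/(f_1,f_2)_{\R^G}$, so $f_1,f_2,f_3$ is not an $\R^G$-regular sequence; since a Cohen-Macaulay $\R^G$ would force every phsop to be a regular sequence (as recalled above), $\R^G$ cannot be Cohen-Macaulay.

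To construct the $A_i$ I would exploit the trace structure of $I_{G/N}$. Let $\sigma$ generate the cyclic quotient $G/N$ and put $\Delta := \sigma - 1$. Choosing homogeneous $g_j \in \R^N$ with $f_j = \Tr_{G/N}(g_j)$, and using that on $\R^N$ (where $\sigma^p$ acts trivially) one has $\Tr_{G/N} = \Delta^{p-1}$, I set $h_j := \Delta^{p-2}(g_j)$, so that $\sigma(h_j) = h_j + f_j$. A short computation with this relation then shows that
\[
 A_1 := h_2 f_3 - h_3 f_2, \quad A_2 := h_3 f_1 - h_1 f_3, \quad A_3 := h_1 f_2 - h_2 f_1
\]
are all $\sigma$-invariant, hence lie in $\R^G$, while $A_1 f_1 + A_2 f_2 + A_3 f_3$ telescopes to $0$. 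In particular $A_3 f_3 = -(A_1 f_1 + A_2 f_2) \in (f_1,f_2)_{\R^G}$.

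The crux---and the place where, as the text promises, $\R$-regularity is turned against $\R^G$-regularity---is to prove $A_3 \notin (f_1,f_2)_{\R^G}$. Suppose $A_3 = b_1 f_1 + b_2 f_2$ with $b_i \in \R^G$. Rewriting this as $(b_1 + h_2) f_1 = (h_1 - b_2) f_2$ and invoking that $f_1,f_2$ is an $\R$-regular sequence (an initial segment of the phsop, which is $\R$-regular), the only syzygy of $(f_1,f_2)$ over $\R$ is the Koszul one, so $b_1 = u f_2 - h_2$ and $b_2 = h_1 - u f_1$ for some $u \in \R$. Imposing $\sigma(b_1) = b_1$ and substituting $\sigma(h_2) = h_2 + f_2$ collapses everything to the single equation $\sigma(u) - u = 1$ in $\R$. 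But $\sigma$ acts by linear substitutions of the variables, hence fixes $K$ and preserves degree, so $\sigma(u) - u$ has zero constant term and cannot equal $1$. This contradiction gives $A_3 \notin (f_1,f_2)_{\R^G}$, completing the argument.

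I expect the main obstacle to be finding the correct invariant syzygy. The naive candidate $g_1 f_2 - g_2 f_1$ built directly from the $g_j$ is not $\sigma$-invariant once $p > 2$; the device that fixes this is to pass to the ``primitive'' $h_j = \Delta^{p-2}(g_j)$, whose key property $\sigma(h_j) = h_j + f_j$ makes the $A_i$ invariant and simultaneously drives the final descent to $\sigma(u) - u = 1$. After that, the refutation by inspecting constant terms is immediate, and the implication from non-regularity to the failure of the Cohen-Macaulay property is exactly the recalled fact that in a Cohen-Macaulay ring every phsop is a regular sequence.
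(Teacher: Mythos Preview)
Your proposal is correct and follows essentially the same route as the paper's proof: both construct elements $h_j \in \R^N$ with $\sigma(h_j) - h_j = f_j$ (you via $h_j = \Delta^{p-2}(g_j)$, the paper via the telescoping identity $1 + t + \cdots + t^{p-1} = (1-t)\sum_{j=1}^{p-1} j\, t^{j-1}$), form the $G$-invariant $2\times 2$ minors $A_i = h_j f_k - h_k f_j$, obtain the determinantal syzygy $\sum A_i f_i = 0$, and then use the $\R$-regularity of $f_1,f_2$ to reduce the assumption $A_3 \in (f_1,f_2)_{\R^G}$ to the impossible equation $\sigma(u) - u = 1$. The two arguments are identical up to notation and the cosmetic choice of how to manufacture the $h_j$.
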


\begin{proof}
  Choose an element $\sigma \in G \setminus H$. By the definition of
  $I_{G/N}$ we have $f_i = \sum_{j=0}^{p-1} \sigma^j(h_i)$ with
  $h_i \in \R^N$. Since $N$ is a normal subgroup we have
  $\sigma^j(h_i) \in \R^N$. In characteristic~$p$, we have the
  polynomial identity
  $1 + t + \cdots + t^{p-1} = (1 - t) \bigl(1 + 2 t + \cdots + (p-1)
  t^{p-2}\bigr)$. So setting
  $g_i := - \sum_{j=1}^{p-1} j \sigma^{j-1}(h_i)$ yields
  \begin{equation} \label{eqFi}%
    f_i = \sigma(g_i) - g_i \quad \text{with} \quad g_i \in \R^N
    \qquad (i = 1,2,3).
  \end{equation}
  A consequence is that for $1 \le i < j \le 3$ the
  $u_{i,j} := f_i g_j - f_j g_i \in \R^N$ satisfy
  \[
    \sigma(u_{i,j}) = f_i \sigma(g_j) - f_j \sigma(g_i)
    \underset{\cref{eqFi}}{=} f_i (f_i + g_i) - f_j (f_i + g_i) =
    u_{i,j},
  \]
  so in fact $u_{i,j} \in \R^G$. The
  claim that the sequence $f_1,f_2,f_3$ is not $\R^G$-regular will
  follow from the relation
  \[
    f_1 u_{2,3} - f_2 u_{1,3} + f_3 u_{1,2} = \det
    \begin{pmatrix}
      f_1 & f_2 & f_3 \\
      f_1 & f_2 & f_3 \\
      g_1 & g_2 & g_3
    \end{pmatrix} = 0.
  \]
  Indeed, assuming regularity would yield $u_{1,2} = (f_1,f_2)$, i.e.,
  $u_{1,2} = f_1 \widetilde g_2 - f_2 \widetilde g_1$ with
  $\widetilde g _i \in \R^G$. With the definition of~$u_{1,2}$ this
  gives $f_1 (g_2 - \widetilde g_2) = f_2 (g_1 - \widetilde
  g_1)$. Since $f_1,f_2$ is a $\R$-regular sequence, we obtain
  $g_1 - \widetilde g_1 \in (f_1)$, so $g_1 - \widetilde g_1 = f_1 d$
  with $d \in \R$. By~\cref{eqFi} and since $\widetilde g_1 \in \R^G$,
  this implies
  \[
    f_1 \bigl(\sigma(d) - d\bigr) = \sigma(g_1 - \widetilde g_1) -
    (g_1 - \widetilde g_1) = f_1.
  \]
  Now because $f_1$ is $\R$-regular, we obtain $\sigma(d) - d = 1$,
  which is a contradiction since for every polynomial the homogeneous
  part of degree~$0$ is an invariant.
\end{proof}

With this, the proof of \cref{tCM} is complete. The proof can be
framed in homological terms: there is a nonzero element in the first
cohomology $H^1(G/N,R^N)$, and this cohomology group is annihilated
when multiplied by elements from the relative trace ideal
$I_{G/N}$. Now from the fact that a phsop of length~3 annihilates a
nonzero class in $H^1(G,R)$, it can be concluded that $\R^G$ fails to
be Cohen-Macaulay. This approach, with using higher cohomology as
well, has brought forth some further results, such as:
\begin{itemize}
\item If $G$ acts by the regular representation, then $\R^G$ is
  Cohen-Macaulay if and only if $G \cong C_2$, $G \cong C_3$, or
  $G \cong C_2 \times C_2$ (see \cite{kem:h}).
\item If $G$ acts simultaneously on~$k$ sets of variables (the case of
  {\em vector invariants} that was also treated in \cite{CGHSW:b}),
  then $\R^G$ is not Cohen-Macaulay for~$k$ large enough (see
  \cite{kem:h}). In fact, with rising~$k$, the {\em Cohen-Macaulay
    defect}
  $\dim\bigl(\R^G\bigr) - \operatorname{depth}\bigl(\R^G\bigr)$ tends
  to infinity (see \cite{Gordeev.Kemper}).
\item If $G$ acts as a permutation group and $|G|$ is not divisible
  by~$p^2$ (the {\em mildly modular} case), the exact depth of $\R^G$
  was determined in~\cite{kem:flat}. For example, if
  $G \not \cong C_2$ acts by the regular representation,
  $\operatorname{depth}\bigl(\R^G\bigr) = \frac{|G|}{p} + 2$,
  consistent with the result mentioned above. As another example, the
  Cohen-Macaulay defect of vector invariants of $S_n$, with
  $p \le n < 2 p$, acting on $k \ge 2$ sets of variables, is
  $(k-2) (p-1)$.
\end{itemize}
In the above listing we have always assumed that the
characteristic~$p$ divides $|G|$ and $G$ acts faithfully.

Much more recently, Ben Blum-Smith and Sophie Marques~\cite{BSM:2018}
scrutinized the case of permutation actions further and achieved the
following result: for a permutation group $G$ (meaning that $G$
permutes the variables~$x_i$) the invariant rings in all
characteristics are Cohen-Macaulay if and only if $G$ is generated by
bireflections. (Notice that in the case of permutation actions,
bireflections are transpositions, double-transpositions and 3-cycles.)
This is remarkable not only because of its simplicity, but also since
it reaches the gold standard of directly translating a group or
representation theoretic property of the action to an algebraic
property of the invariant ring.

So far, we have looked at two nice features of nonmodular invariant
theory that are missing in the modular case: Noether's degree bound
and the Cohen-Macaulay property. A further feature, which is
especially useful for computations, is Molien's formula, presented in
Bernd's book~\cite[Theorem~2.2.1]{stu}. It affords the computation of
the Hilbert series of $\R^G$ without computing a single invariant. But
again, in the modular case Molien's formula is absent. What was
present at the time of Bernd's book was a paper by Almkvist and
Fossum~\cite{AF} that provides formulas for the Hilbert series in the
case of an indecomposable action of the cyclic group $C_p$ of
order~$p$. In a paper with Ian Hughes~\cite{Hughes:Kemper:b}, we
picked up this thread and were surprised to find that the methods can
be carried further to the {\em mildly modular} case, i.e., the case
where $|G|$ is divisible by~$p$ but not by~$p^2$. The
paper~\cite{Hughes:Kemper:b} gives a recipe for computing the Hilbert
series of $\R^G$ which, like Molien's formula, does the job without
considering any invariant. Also in terms of computational cost, the
recipe is very similar to Molien's formula, it just takes longer to
explain (or implement) it. The
book~\cite[Section~3.4.2]{Derksen:Kemper:2015} presents an outline, so
let us direct interested readers there for details. We hit multiple
roadblocks, however, when trying to move anywhere beyond the mildly
modular case. Perhaps a better understanding of Karagueuzian's and
Symonds's methods in~\cite{KS:02} will open new pathways to the a
priori computation of Hilbert series in modular invariant theory.


\section{Separating invariants of finite groups} \label{sSepFin}

One of the main purposes of invariants, perhaps even their raison d'\^
etre, is to separate groups orbits. The theme permeates Bernd's book,
and it prompted the development of Geometric Invariant Theory.
So it seems natural to consider the separation properties of given
invariants, which leads to the concept of separating invariants. Let
us recall the definition. A subset $S \subseteq \R^G$ is called {\bf
  separating} if for any two points $v,w \in V := K^n$ we have: if
there is an invariant $f \in \R^G$ such that $f(v) \ne f(w)$, then
there is a $g \in S$ with $g(v) \ne g(w)$.

Here $G$ need not be finite, although we are considering the finite
case in this section. Clearly every set of generating invariants is
also separating. Perhaps the easiest example that reveals that a
separating set can really be smaller than a generating one is the
action of a cyclic group $C_n$ on $\CC^2$ by the scalar matrices
$e^{2 k \pi i/n} \cdot \left(\begin{smallmatrix} 1 & 0 \\ 0 &
    1 \end{smallmatrix}\right)$. A minimal {\em generating} set
consists of all~$n + 1$ monomials of degree~$n$; however
$f_1 := x_1^n$, $f_2 := x_1^{n-1} x_2$, and $f_3 := x_2^n$ form a
separating set. This follows from the formula
\[
  x_1^{n-k} x_2^k = \frac{f_2^k}{f_1^{k-1}}.
\]
As we are dealing with a weakening of the concept of generating the
invariant ring, this offers some hope that the bad behavior of
generating sets in the modular case might be mitigated by considering
separating sets instead. And, indeed, Noether's degree bound is
reinstated by the following elementary result.

\begin{theorem}[Noether's degree bound for separating
  invariants] \label{tNoetherSep}%
  Let $G$ be a finite group acting on $\R$ by linear transformations
  of the~$x_i$. With additional variables~$t$ and~$y$, form the
  polynomial
  \[
    F(t,y) := \prod_{\sigma \in G} \left(y - \sum_{i=1}^n \sigma(x_i)
      t^{i-1}\right) \in \R^G[t,y].
  \]
  If all the coefficients of $F$ agree on two points $v,w \in K^n$,
  then~$v$ and~$w$ are in the same $G$-orbit. In particular, the
  coefficients of $F$ form a separating set consisting of homogeneous
  invariants of degree $\le |G|$, and all $G$-orbits can be separated
  by invariants.
\end{theorem}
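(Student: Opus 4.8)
The plan is to prove the main assertion by evaluating $F$ at a point and invoking unique factorization, and then to read off the ``in particular'' clauses formally. First I would record the two structural facts. Assigning the weights $\deg(x_i) = \deg(y) = 1$ and $\deg(t) = 0$, each factor $y - \sum_{i=1}^n \sigma(x_i)\, t^{i-1}$ is homogeneous of weighted degree~$1$, so $F$ is homogeneous of weighted degree~$|G|$; writing $F = \sum_{a,b} c_{a,b}\, t^a y^b$ with $c_{a,b} \in \R$, this forces each coefficient $c_{a,b}$ to be homogeneous in the~$x_i$ of degree $|G| - b \le |G|$. That the $c_{a,b}$ are invariant follows because any $\tau \in G$ merely permutes the factors of the product (sending $\sigma(x_i)$ to $(\tau\sigma)(x_i)$), whence $\tau(F) = F$.

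For the main assertion, fix $v,w \in K^n$ and observe that the coefficients of $F$ agreeing on $v$ and $w$ is exactly the identity $F(t,y)(v) = F(t,y)(w)$ in $K[t,y]$. For each $\sigma$ put $p_\sigma^v(t) := \sum_{i=1}^n \sigma(x_i)(v)\, t^{i-1} \in K[t]$; evaluating the product gives $F(t,y)(v) = \prod_{\sigma \in G}\bigl(y - p_\sigma^v(t)\bigr)$, and similarly for $w$. Viewing both sides in $K(t)[y]$, a polynomial ring over a field and hence a UFD, these are two factorizations of the same element into monic linear (thus irreducible) factors in $y$. By uniqueness of factorization the multisets $\{p_\sigma^v : \sigma \in G\}$ and $\{p_\sigma^w : \sigma \in G\}$ coincide.

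The crux is then to single out the factor coming from the identity element. The polynomial $p_e^v(t) = \sum_{i=1}^n v_i\, t^{i-1}$ must equal $p_\tau^w(t)$ for some $\tau \in G$; comparing coefficients of $t^{i-1}$ yields $v_i = (\tau(x_i))(w)$ for all~$i$, which says precisely that $v = \tau^{-1}(w)$, so $v$ and $w$ lie in the same $G$-orbit. This is the heart of the argument: the bookkeeping that converts a matching of \emph{polynomials} $p_\sigma$ into a coordinatewise identity of \emph{points}, with the identity factor recovering the bare coordinates.

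Finally the ``in particular'' clauses follow formally. The $c_{a,b}$ are homogeneous invariants of degree $\le |G|$ by the first paragraph, and invariants are constant on $G$-orbits; so if some invariant separates $v$ and $w$, then $v$ and $w$ lie in distinct orbits, and the main assertion in contrapositive produces a $c_{a,b}$ with $c_{a,b}(v) \ne c_{a,b}(w)$, showing the $c_{a,b}$ form a separating set. The same contrapositive, applied to any pair of points in distinct orbits, shows directly that $G$-orbits are separated by invariants. I expect no serious obstacle beyond stating the evaluation-and-unique-factorization step cleanly; the one point demanding care is the convention relating the action on functions to the action on points, which decides whether the conclusion reads $w = \tau v$ or $v = \tau^{-1} w$.
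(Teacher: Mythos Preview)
Your proof is correct and follows essentially the same approach as the paper: evaluate $F$ at $v$ and $w$, use unique factorization of the resulting monic polynomial in $y$ over $K(t)$ to match linear factors, and compare coefficients of $t^{i-1}$ to conclude that one point is the image of the other under some group element. The paper is terser---it skips the verification of homogeneity and invariance and passes directly from the equality of products to the existence of a matching factor---but the substance is identical.
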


\begin{proof}
  The hypothesis gives
  \[
    \prod_{\sigma \in G} \left(y - \sum_{i=1}^n
      \bigl(\sigma(x_i)\bigr)(v) t^{i-1}\right) = \prod_{\sigma \in G}
    \left(y - \sum_{i=1}^n \bigl(\sigma(x_i)\bigr)(w) t^{i-1}\right).
  \]
  So there is a $\sigma \in G$ such that
  $\sum_{i=1}^n x_i(w) t^{i-1} = \sum_{i=1}^n
  \bigl(\sigma(x_i)\bigr)(v) t^{i-1}$. Of course~$x_i$ is nothing but
  the $i$-th coordinate functional on $K^n$. For all~$i$ we obtain
  \[
    x_i(w) = \bigl(\sigma(x_i)\bigr)(v) =
    x_i\bigl(\sigma^{-1}(v)\bigr),
  \]
  which implies $w = \sigma^{-1}(v)$. So the first statement is
  proved, and the other statements are now clear.
\end{proof}

Of course the theorem contains a simple method for constructing
separating invariants that does not require any Gr\"obner basis
computation at all. However, even for moderate-sized groups the
computational cost of expanding the product and extracting
coefficients is enormous, as is the number of invariants that the
method produces. \\

By now, we have reported progress on almost all topics treated in the
chapter on finite group actions in Bernd's book, with one notable
exception: reflection groups. These are groups generated by
reflections, where a reflection is an element of $G$ that fixes a
codimension-$1$ subspace of $K^n$, including reflections of orders
other than~$2$ (if the ground field $K$ allows them) and even elements
acting trivially. Also very well-known is the result that in the
nonmodular case, the invariant ring $\R^G$ can be generated by~$n$
invariants (equivalently, is isomorphic to a polynomial ring) if and
only if $G$ is a reflection group (see Bernd's
book~\cite[Theorem~2.4.1]{stu}). As readers will be expecting by now,
this breaks down in the modular case. But not completely: while there
are many examples of modular reflection groups whose invariant ring is
{\em not} a polynomial ring, it is still true that for $\R^G$ to be
polynomial, $G$ has to be a reflection group. This implication is
often attributed to Serre, but in~\cite{Serre68} he says that it was
proved in 1955 by Chevalley. Meanwhile, Chevalley's article
\cite{Chevalley} does appear to not contain the result, and
Benson~\cite{bens} gives the citation~\cite[Chapitre~5, \S~5,
Exercice~7]{bou:81}, where there is indeed a blueprint of a proof,
using purity of the branch locus.

So far, this has nothing to do with separating invariants. But since
generating invariants are always separating, a stronger result would
be that if there exist~$n$ separating invariants, then $G$ has to be a
reflection group. And this is precisely what Emilie Dufresne proved in
her amazing paper~\cite{Dufresne:2009}. It is striking that Dufresne's
result has a rather short but elegant proof. But before turning to
that, let us present the precise statement.

\begin{theorem}[Dufresne~\cite{Dufresne:2009}] \label{tDufresne}%
  Assume that $K$ is algebraically closed and that there is a
  separating set of~$n$ invariants, with~$n$ the number of
  variables. Then $G$ is a reflection group.
\end{theorem}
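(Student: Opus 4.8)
The plan is to translate the separating hypothesis into geometry and then run a ramification argument based on purity of the branch locus. First I would use that invariants of a finite group separate orbits -- this is exactly what the proof of \cref{tNoetherSep} establishes: if $g(v)=g(w)$ for all invariants $g$, then $v$ and $w$ lie in one $G$-orbit. Hence for a separating set $f_1\upto f_n$ the morphism $\pi\colon V\to K^n$, $v\mapsto(f_1(v)\upto f_n(v))$, satisfies $\pi(v)=\pi(w)$ if and only if $v$ and $w$ are in the same orbit; in particular every fibre of $\pi$ lies in a single (finite) orbit, and $\pi$ factors as $V\to V/G\to K^n$, the second map $q$ being a bijection on closed points. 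Since the image of $\pi$ is dense of dimension $n$, the $f_i$ are algebraically independent, so $A:=K[f_1\upto f_n]$ is a polynomial ring and $\operatorname{Frac}(\R^G)$ is purely inseparable over $\operatorname{Frac}(A)$.

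Let $N\trianglelefteq G$ be the reflection subgroup, generated by all reflections; it is normal, since the set of reflections is stable under conjugation. By definition $G$ is a reflection group exactly when $N=G$, so it suffices to rule out $[G:N]>1$. The conceptual core is a codimension-one computation: at every height-one prime $\mathfrak P$ of $\R$, the inertia group of $\mathfrak P$ over $\R^G$ is trivial or is generated by the reflections fixing the corresponding hyperplane. This is the classical statement that $\R$ ramifies over $\R^G$ precisely along reflecting hyperplanes. Consequently every such inertia group lies in $N$, which means that the intermediate extension $\R^G\subseteq\R^N$ is unramified at all height-one primes; equivalently, $G/N$ acts freely in codimension one on $V/N=\Spec\R^N$.

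It remains to upgrade this codimension-one information to the global statement $N=G$, and the natural tool is purity of the branch locus (Zariski--Nagata): a finite cover of a regular base by a normal scheme that is unramified in codimension one is étale, hence trivial over a base with no nontrivial connected covers. In characteristic~$0$ this closes immediately: the extension $\operatorname{Frac}(\R^G)/\operatorname{Frac}(A)$ is separable of degree one, so $q$ is birational, and being a quasi-finite bijection onto the normal variety $K^n$ it is an isomorphism by Zariski's main theorem; thus $V/G\cong K^n$ is smooth and Shephard--Todd--Chevalley already forces $G$ to be a reflection group. The hard part is the modular case, and this is where I expect the main obstacle to lie. There $q$ is genuinely purely inseparable, so one cannot simply transport the codimension-one unramifiedness of $\R^G\subseteq\R^N$ across $q$ to the smooth model $K^n$; moreover $K^n$ is no longer simply connected, since Artin--Schreier covers exist, so ``étale in codimension one'' does not by itself yield triviality. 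Overcoming exactly this interplay of inseparability and wild ramification -- showing that the radicial, orbit-separating structure of $q$ nevertheless forbids a nontrivial separable cover $\R^G\subseteq\R^N$ when $[G:N]>1$ -- is the crux of the proof and the source of Dufresne's ingenuity.
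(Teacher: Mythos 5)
Your proposal stalls exactly where you say it does, and that missing step is the whole theorem. You reduce everything to purity of the branch locus, but Zariski--Nagata purity requires a \emph{regular} base, and the only regular object available is $\Spec A = K^n$, which in positive characteristic is separated from $\Spec \R^G$ by the purely inseparable map $q$. Since $\R^G$ is not assumed to be a polynomial ring (that hypothesis is the classical Serre--Chevalley setting, strictly weaker than what is to be proved), you cannot apply purity to the cover $\Spec \R^N \to \Spec \R^G$, and, as you yourself observe, you cannot transport unramifiedness in codimension one across $q$. Labelling this obstruction ``the crux and the source of Dufresne's ingenuity'' is an honest assessment but not a proof. The characteristic-$0$ half that you do complete only recovers the classical consequence that $n$ separating invariants force $\R^G$ to be polynomial and hence $G$ to be a reflection group; the modular case, which is the point of the theorem, is left entirely open.

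The paper's proof avoids ramification theory altogether and is uniform in the characteristic. The relevant object is not the quotient map $V \to K^n$ but the graph of the action $\Gamma = \bigl\{\bigl(v,\sigma(v)\bigr)\bigr\} \subseteq V \times V$. By \cref{tNoetherSep}, $\Gamma$ is cut out set-theoretically by the $n$ polynomials $\Delta f_i = f_i(\mathbf x) - f_i(\mathbf y)$, so the $n$-dimensional $\Gamma$ is a complete intersection in $2n$-space; hence $K[\mathbf x,\mathbf y]/(\Delta f_1 \upto \Delta f_n)$ is Cohen--Macaulay and, by Hartshorne's connectedness theorem, connected in codimension~$1$. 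On the other hand, \cref{pDufresne} shows that the components $Z_\sigma$ and $Z_\tau$ of $\Gamma$ meet in codimension~$1$ exactly when $\sigma^{-1}\tau$ is a reflection, so connectedness of $\Gamma$ in codimension~$1$ is \emph{equivalent} to $G$ being a reflection group. Note that the codimension-one computation you carry out with inertia groups along reflecting hyperplanes encodes the same local information as the intersection pattern of the $Z_\sigma$; the difference is that the graph formulation packages it through Cohen--Macaulayness of a complete intersection rather than through a descent along $q$, and therefore never has to confront the inseparability that blocks your argument.
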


Dufresne's paper~\cite{Dufresne:2009} also offers an example of an
invariant ring that has~$n$ separating invariants, but does {\em not}
have~$n$ generating invariants. What about the hypothesis that $K$ be
algebraically closed? The following example shows that the theorem
fails without that hypothesis. It also shows that when it comes to
separating invariants, the ground field $K$ makes a difference.

\begin{ex} \label{exDufresne}%
  The group $G := \langle A\rangle \subset \GL_2(\RR)$ generated by
  $A := \left(\begin{smallmatrix} 0 & 1 \\ -1 &
      -1 \end{smallmatrix}\right)$ (the companion matrix of the third
  cyclotomic polynomial) is not a reflection group. The transformation
  given by $A$ maps~$x_1$ to~$- x_2$ and~$x_2$ to~$x_1 - x_2$, and
  thus, with $\omega := e^{2 \pi i/3}$ and
  $y_j := x_1 + \omega^j x_2 \in \R[\CC]$ ($j = 1,2$), it maps~$y_j$
  to~$\omega^j y_j$. Therefore
  \[
    f_1 := y_1^3 + y_2^3 \quad \text{and} \quad f_2 := \sqrt{-3}
    \bigl(y_1^3 - y_2^3\big)
  \]
  are invariant under $G$, and also under complex conjugation. So
  $f_1,f_2 \in \R[\RR]^G$. We claim that the~$f_i$ are separating
  invariants. Observe that they are also invariant under the
  transformation given by
  $D:= \left(\begin{smallmatrix} \omega & 0 \\ 0 &
      \omega \end{smallmatrix}\right)$, so
  $f_i \in \R[\CC]^{\widetilde G}$ with
  $\widetilde G := \langle A,D\rangle$. In fact, $\widetilde G$ is
  generated by the transformations
  $\left(\begin{smallmatrix} y_1 \\ y_2\end{smallmatrix}\right)
  \mapsto \left(\begin{smallmatrix} \omega y_1 \\
      y_2\end{smallmatrix}\right)$ and
  $\left(\begin{smallmatrix} y_1 \\ y_2\end{smallmatrix}\right)
  \mapsto \left(\begin{smallmatrix} y_1 \\ \omega
      y_2\end{smallmatrix}\right)$. So it is a reflection group, and
  \[
    \R[\CC]^{\widetilde G} = \CC[y_1,y_2]^{\widetilde G} =
    \CC[y_1^3,y_2^3] = \CC[f_1,f_2].
  \]
  Being generating invariants, the~$f_i$ are also separating
  invariants in $\R[\CC]^{\widetilde G}$ and by \cref{tNoetherSep},
  they separate $\widetilde G$-orbits in $\CC^2$. But our claim was
  that they are separating invariants in $\R[\RR]^G$. To prove this,
  let~$v,w \in \RR^2$ such that $f_i(v) = f_i(w)$ for $i = 1,2$. Then
  by what we have just seen there exists
  $\widetilde \sigma \in \widetilde G$ such that
  $w = \widetilde \sigma(v)$. We can write
  $\widetilde \sigma = \omega^i \cdot \sigma$ with $\sigma \in G$
  and~$i \in \ZZ$, so $w = \omega^i \sigma(v)$. But~$w$
  and~$\sigma(v)$ have components in $\RR$, so $\omega^i = 1$
  or~$v = w = 0$. In both cases $G(v) = G(w)$, and our claim is
  proved. Of course the~$f_i$ are {\em not} separating invariants in
  $\R[\CC]^G$.

  To sum up, this example provides a finite group
  $G \subset \GL_2(\RR)$ which is not a reflection group, but there
  exists a separating set of two invariants.
\end{ex}

Over finite ground fields, there can even exist separating sets with
fewer than~$n$ elements (see
\cite[Theorem~1.1]{Kemper:Lopatin:Reimers:2022}).

Even though \cref{tDufresne} requires $K$ to be algebraically closed,
the implication ``$\R^G$ polynomial $\Rightarrow$ $G$ reflection
group'' follows from it in full generality, i.e., without the
algebraic closedness hypothesis. In fact, if there are are~$n$
invariants generating $\R^G$, they also generate $\R[\overline{K}]^G$
and are therefore separating for the $G$-action on $\overline{K}^n$.
From this, \cref{tDufresne} yields that $G$ is a reflection
group.

In a nutshell, Dufresne's proof of \cref{tDufresne} consists of the
crucial observation that the graph of the action is connected in
codimension~$1$ if and only if $G$ is a reflection group (see
\cref{pDufresne} below), combined with Hartshorne's connectedness
theorem. Let us explain. By the {\bf graph of the action} we mean the
set
$\Gamma := \bigl\{\bigl(v,\sigma(v)\bigr) \mid v \in V, \ \sigma \in
G\bigr\} \subset V \times V$, where $V := K^n$. Since $G$ is finite,
this is (Zariski-)closed in $V \times V$, and, by \cref{tNoetherSep},
a pair $(v,w)$ of points lies in $\Gamma$ if and only if all
invariants take the same value on~$v$ and~$w$. Furthermore, a variety
is called {\bf connected in codimension~$k$} if it is connected even
after removing a closed subset of codimension~$> k$. (A bit of care
must be taken to get the definition of codimension right if the
variety has irreducible components of different dimensions, but this
is not the case for the graph of the action.) For example, two planes
meeting in a line are connected in codimension~$1$, but two planes
meeting in a point are not. {\em Not} to be connected in
codimension~$k$ means that the variety can be written as a union of
two closed subsets, none contained in the other, whose intersection
has codimension~$> k$. For want of a better term, let us call such a
pair of closed subsets a {\em witness of disconnectedness}.

\begin{prop}[Dufresne~\cite{Dufresne:2009}] \label{pDufresne}%
  The graph of the action is connected in codimension~$1$ if and only
  if $G$ is a reflection group. If this is not the case, there is a
  witness $(X,Y)$ of disconnectedness with $X$ and $Y$ given by
  homogeneous ideals.
\end{prop}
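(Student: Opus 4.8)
The plan is to exploit that $\Gamma$ is a finite union of $n$-dimensional linear subspaces and to reduce both statements to a purely combinatorial fact about their incidences. I write $\Gamma = \bigcup_{\sigma \in G} \Gamma_\sigma$, where $\Gamma_\sigma := \{(v,\sigma(v)) \mid v \in V\}$ is the graph of the linear map~$\sigma$. Each $\Gamma_\sigma$ is an $n$-dimensional linear subspace of $V \times V$, hence irreducible and homogeneous, and---passing if necessary to the image of $G$ in $\GL(V)$, which changes neither $\Gamma$ nor the reflection-group property (recall that trivially acting elements count as reflections)---these subspaces are exactly the irreducible components of $\Gamma$. Thus $\Gamma$ is equidimensional of dimension~$n$, so the codimension of a closed subset $Z \subseteq \Gamma$ is unambiguously $n - \dim Z$, and the parenthetical caveat in the definition of codimension does not bite.

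First I would compute the pairwise intersections. A point $(v,w)$ lies in $\Gamma_\sigma \cap \Gamma_\tau$ if and only if $\sigma(v) = w = \tau(v)$, that is, if and only if $v \in V^{\tau^{-1}\sigma}$; projecting to the first factor identifies $\Gamma_\sigma \cap \Gamma_\tau$ with the fixed space $V^{\tau^{-1}\sigma}$. Hence $\dim(\Gamma_\sigma \cap \Gamma_\tau) = n - 1$ exactly when $\tau^{-1}\sigma$ is a reflection, and is $\le n-2$ otherwise. This is the decisive link between the geometry and the group. I now form the graph $\Delta$ whose vertices are the components $\Gamma_\sigma$ and whose edges join two components meeting in codimension~$1$, i.e. join $\sigma$ and $\tau$ precisely when $\tau^{-1}\sigma$ is a reflection. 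Since the neighbors of $\sigma$ are the $\sigma r$ with $r$ a reflection, the connected component of $\Gamma_1$ in $\Delta$ is the subgroup $W \le G$ generated by reflections, and the connected components of $\Delta$ are the cosets of $W$. Therefore $\Delta$ is connected if and only if $W = G$, that is, if and only if $G$ is a reflection group.

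It remains to match ``$\Gamma$ connected in codimension~$1$'' with ``$\Delta$ connected,'' which is the standard dual-graph criterion and the step I expect to require the most care. For the forward construction, if $\Delta$ is disconnected I would split its vertex set into two nonempty parts $\mathcal A \sqcup \mathcal B$ with no edges between them and set $X := \bigcup_{\sigma \in \mathcal A} \Gamma_\sigma$ and $Y := \bigcup_{\tau \in \mathcal B} \Gamma_\tau$; then $X \cap Y = \bigcup_{\sigma \in \mathcal A,\ \tau \in \mathcal B} (\Gamma_\sigma \cap \Gamma_\tau)$ is a finite union of cross-intersections, each of dimension $\le n-2$ by the computation above, so $(X,Y)$ is a witness of disconnectedness. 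For the converse I would take an \emph{arbitrary} witness $(X,Y)$ and show it induces a disconnection of $\Delta$: since each $\Gamma_\sigma$ is irreducible and $\Gamma_\sigma = (\Gamma_\sigma \cap X) \cup (\Gamma_\sigma \cap Y)$, each component lies entirely in $X$ or entirely in $Y$; it cannot lie in both, as that would force $\dim(X \cap Y) = n$. This assigns every vertex of $\Delta$ to exactly one side, both sides nonempty (else one of $X,Y$ would be all of $\Gamma$, contradicting that neither contains the other), and an edge across the partition would place a codimension-$1$ intersection inside $X \cap Y$, contradicting $\dim(X \cap Y) \le n-2$. Hence $\Delta$ is disconnected. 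Combining the two halves with the group-theoretic identification gives the equivalence, and since the $\Gamma_\sigma$ are linear subspaces through the origin, the $X$ and $Y$ produced in the disconnected case are cut out by homogeneous (indeed linear) ideals, which settles the final sentence.
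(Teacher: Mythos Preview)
Your proof is correct and follows essentially the same approach as the paper: both decompose $\Gamma$ into the linear components $\Gamma_\sigma$, compute $\Gamma_\sigma \cap \Gamma_\tau \cong V^{\tau^{-1}\sigma}$, and then argue via the incidence structure of these components. The only cosmetic difference is that you phrase the combinatorics using an explicit dual graph $\Delta$ (with connected components the cosets of the reflection subgroup), whereas the paper uses the equivalent language of an equivalence relation induced by a witness $(X,Y)$; the logical content is identical.
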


\begin{proof}
  The irreducible components of $\Gamma$, the graph of the action, are
  the sets
  \[
    Z_\sigma := \bigl\{\bigl(v,\sigma(v)\bigr) \mid v \in V\bigr\} \cong V
    \quad \text{for} \ \sigma \in G,
  \]
  and their intersections are
  \begin{equation} \label{eqZ}%
    Z_\sigma \cap Z_\tau = \bigl\{\bigl(v,\sigma(v)\bigr) \mid v \in V, \
    \sigma(v) = \tau(v)\bigr\} \cong V^{\sigma^{-1} \tau}.
  \end{equation}
  If $\Gamma$ is not connected in codimension~$1$, then there is a
  witness $(X,Y)$ of disconnectedness. Each irreducible component
  $Z_\sigma$ is contained in $X$ or in $Y$, but not in both. So
  writing $\sigma \sim \tau$ if $Z_\sigma$ and $Z_\tau$ are both
  contained in $X$ or both contained in $Y$ defines an equivalence
  relation on $G$. If $\sigma \not\sim \tau$, then
  $\codim(Z_\sigma \cap Z_\tau) > 1$, which by~\cref{eqZ} means that
  $\sigma^{-1} \tau$ is {\em not} a reflection. Using transitivity, we
  conclude that if~$\sigma$ and~$\tau$ lie in the same left coset of
  the subgroup $H \subseteq G$ generated by all reflections, then
  $\sigma \sim \tau$. But not all group elements can be equivalent,
  since otherwise $\Gamma = X$ or $\Gamma = Y$. it follows that
  $H \subsetneqq G$, so $G$ is not a reflection group.

  This also works backwards: If $H \subsetneqq G$, form
  $X := \bigcup_{\sigma \in H} Z_\sigma$ and
  $Y := \bigcup_{\tau \in G \setminus H} Z_\tau$ (or lump irreducible
  components corresponding to cosets of $H$ together in any other
  way). Then $(X,Y)$ is a witness for disconnectedness. Since every
  $Z_\sigma$ is given by homogeneous linear equations, $X$ and $Y$ are
  given by homogeneous ideals.
\end{proof}

\begin{rem} \label{rDufresne}%
  Of course by the same argument we see that $\Gamma$ is connected in
  codimension~$2$ if and only if $G$ is generated by bireflections,
  and so on.
\end{rem}

Hartshorne's connectedness theorem is also about connectedness in
codimension~$1$, but how can we use it? Assume there are~$n$
separating invariants $f_1 \upto f_n$. Taking additional variables
$y_1 \upto y_n$ and setting
$\Delta f_i := f_i(\mathbf x) - f_i(\mathbf y)$, which can be
evaluated at points from $V \times V$, we see that the variety in
$V \times V$ given by the $\Delta f_i$ is precisely $\Gamma$. Since
$\dim(\Gamma) = n$, this implies that $\Gamma$ is a complete
intersection. Therefore
$R := K[\mathbf x,\mathbf y]/(\Delta f_1 \upto \Delta f_n)$ is
Cohen-Macaulay (see \cite[Proposition~18.13]{eis}), and now the
connectedness theorem~\cite[Corollary~2.4 and
Remark~1.3.2]{Hartshorne:1962} tells us that $\Gamma$ (more precisely,
$\Spec(R)$, a potentially nonreduced version of $\Gamma$) is connected
in codimension~$1$. So applying \cref{pDufresne} gives us
\cref{tDufresne}.

If Serre or Hartshorne had been aware of \cref{pDufresne}, it seems
likely that they would have made the connection to the connectedness
theorem (no pun intended) and come up with this simple idea to prove
the implication ``$\R^G$ polynomial $\Rightarrow$ $G$ reflection
group.'' Quitting this idle speculation, let us ask how hard or easy
Defresne's proof really is by presenting a textbook-style variant
proof, which instead of citing Hartshorne's connectedness theorem only
uses knowledge from Bernd's book. We will restrict our attention to
the case that the~$n$ separating invariants are homogeneous. This
restriction is a mild for two reasons: (1) most invariant theorists
(almost) never use inhomogeneous invariants in the first place, and
(2) if there are~$n$ possibly inhomogeneous {\em generating}
invariants, then there are also~$n$ homogeneous generating invariants
(it is left as an exercise to prove this);
thus in order to show the implication ``$\R^G$ polynomial
$\Rightarrow$ $G$ reflection group,'' one may assume homogeneity
without loss of generality.

\begin{proof}[Proof of \cref{tDufresne}]
  As explained above, this proof only treats the case of {\em
    homogeneous} separating invariants, and it does not quote any
  ``heavier machinery'' such as Hartshorne \cite[Corollary~2.4 and
  Remark~1.3.2]{Hartshorne:1962}.


  By hypothesis there are homogeneous invariants $f_1 \upto f_n$ such
  that $\Gamma = \Var_{V \times V}(D)$ with
  $D := (\Delta f_1 \upto \Delta f_n)$. Since $\dim(\Gamma) = n$, the
  $\Delta f_i$ form a phsop in $K[\mathbf x,\mathbf y]$. With
  \cref{pDufresne} in mind, we assume that $\Gamma = X \cup Y$ with
  $X$ and $Y$ closed, given by homogeneous ideals, such that
  $\codim_\Gamma(X \cap Y) \ge 2$. So if we can show that
  $X \subseteq Y$ or $Y \subseteq X$, we are done.

  We have $X = \Var_{V \times V}(g_1 \upto g_l)$ and
  $Y = \Var_{V \times V}(h_1 \upto h_m)$ with~$g_i$ and~$h_j$
  homogeneous. Each product $g_i h_j$ lies in the vanishing ideal of
  $X \cup Y = \Gamma$, which is $\sqrt D$, so there is~$k$ such that
  $g_i^k h_j^k \in D$. The ideals $I := (g_1^k \upto g_l^k) + D$ and
  $J := (h_1^k \upto h_m^k) + D$ are homogeneous, satisfy
  $I \cdot J \subseteq D \subseteq I \cap J$, and define the algebraic
  sets $X$ and $Y$. So it suffices to show $I \subseteq J$ or
  $J \subseteq I$.

  The ideal $I + J$ defines $X \cap Y$, which has dimension
  $\le n - 2 = 2n - (n + 2)$. So by \cref{rPhsop}, the phsop formed by
  the $\Delta f_i$ can be extended by two further polynomials, both
  lying in $I + J$. Since $K[\mathbf x,\mathbf y]$ is Cohen-Macaulay,
  our phsop is a regular sequence of length $n + 2$. Forming the ring
  $R := K[\mathbf x,\mathbf y]/D$ and considering the ideals $I/D$ and
  $J/D$, we have the situation of \cref{lConnectedness} below. This
  tells us that $I/D \subseteq J/D$ or $J/D \subseteq I/D$, and the
  desired inclusion follows.
\end{proof}

The following lemma, which may be called the graded version of
Hartshorne's connectedness theorem in ideal-theoretic form, was used
in the above proof.

\begin{lemma} \label{lConnectedness}%
  Let $R$ be a graded ring with $K := R_0$ a field, and let
  $I,J \subseteq R$ be homogeneous ideals such that
  $I \cdot J = \{0\}$. If $I + J$ contains a regular sequence of
  length~$2$, then $I \subseteq J$ or $J \subseteq I$.
\end{lemma}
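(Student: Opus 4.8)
The plan is to recognize this as the purely algebraic core of Hartshorne's connectedness theorem and to prove it by manufacturing a syzygy between the two members of the regular sequence, then invoking the rigidity of regular sequences (exactness of the Koszul complex in homological degree $1$) to pin that syzygy down to a scalar. First I would fix a regular sequence $\theta_1,\theta_2 \in I+J$; since $R$ is graded with $R_0 = K$ a field and $I+J$ is a homogeneous ideal, standard graded prime avoidance lets me assume $\theta_1,\theta_2$ are homogeneous, say of degrees $d_1,d_2$. Because $(I+J)_d = I_d + J_d$ in each degree, I can split each member along the two ideals: write $\theta_k = x_k + y_k$ with $x_k \in I$ and $y_k \in J$, both homogeneous of degree $d_k$.

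Next I would exploit the hypothesis $I \cdot J = \{0\}$ to collapse the mixed products. Computing $y_2\theta_1 = y_2 x_1 + y_2 y_1 = y_1 y_2$ and $y_1\theta_2 = y_1 x_2 + y_1 y_2 = y_1 y_2$ (each cross term $y_i x_j$ lies in $J \cdot I = \{0\}$), I obtain the relation
\[
  y_2\,\theta_1 = y_1\,\theta_2 .
\]
This is a syzygy on the regular sequence $\theta_1,\theta_2$. The key step is that for a regular sequence the only syzygies are the trivial (Koszul) ones: from $y_2\theta_1 = y_1\theta_2$ one gets $y_1\theta_2 \in (\theta_1)$, and since $\theta_2$ is a nonzerodivisor modulo $\theta_1$, $y_1 = \gamma\theta_1$ for some $\gamma$; feeding this back and using that $\theta_1$ is a nonzerodivisor yields $y_2 = \gamma\theta_2$ as well. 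Because $y_1$ and $\theta_1$ are homogeneous of the same degree and $\theta_1$ is a nonzerodivisor, the homogeneous components of $\gamma$ in nonzero degree must annihilate $\theta_1$ and hence vanish, so $\gamma$ is homogeneous of degree $0$, i.e. $\gamma \in R_0 = K$.

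Then I would finish with a case split on the scalar $\gamma$. If $\gamma \ne 0$, it is invertible in $K$, so $\theta_1 = \gamma^{-1} y_1 \in J$; then for every $a \in I$ we have $a\theta_1 \in I \cdot J = \{0\}$, and the nonzerodivisor property of $\theta_1$ forces $a = 0$, whence $I = \{0\} \subseteq J$. If instead $\gamma = 0$, then $y_1 = y_2 = 0$, so $\theta_1 = x_1 \in I$ and $\theta_2 = x_2 \in I$; now for every $b \in J$ we get $b\theta_1 \in J \cdot I = \{0\}$, and again the nonzerodivisor property gives $b = 0$, so $J = \{0\} \subseteq I$. In either case one of the inclusions $I \subseteq J$, $J \subseteq I$ holds, as claimed.

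The step I expect to be the main obstacle --- or at least the one demanding the most care --- is the syzygy rigidity together with the descent of $\gamma$ to the base field $K$. The triviality of first syzygies is exactly where length-$2$ regularity (not merely a single nonzerodivisor) is used, and the conclusion would collapse without it; and the passage $\gamma \in K$ is what converts ``$\theta_1 \in J$'' from a mere membership into the invertibility needed to force $I = \{0\}$, which is why the homogeneity reduction at the start is not a cosmetic convenience but genuinely load-bearing.
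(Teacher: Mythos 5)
Your argument is correct, and it reaches the conclusion by a genuinely different route from the paper's. The paper first shows $I \cap J = \{0\}$ (any member of the regular sequence kills $I\cap J$ because $(I+J)\cdot(I\cap J)\subseteq I\cdot J$), which makes the Mayer--Vietoris sequence $\{0\}\to R\to R/I\oplus R/J\to R/(I+J)\to\{0\}$ exact, and then invokes a separate splitting lemma (\cref{lExact}, in effect $\Ext^1_R(R/(I+J),R)=\{0\}$ when $I+J$ contains a regular sequence of length~$2$) to produce the element whose degree-zero component drives the final case analysis. You bypass the homological packaging entirely: splitting $\theta_k=x_k+y_k$ along $I$ and $J$ and killing the cross terms with $I\cdot J=\{0\}$ yields the Koszul syzygy $y_2\theta_1=y_1\theta_2$, rigidity of syzygies on a regular sequence gives $y_k=\gamma\theta_k$, and homogeneity forces $\gamma\in K$; your scalar $\gamma$ is exactly the degree-zero obstruction that the paper's splitting element carries. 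Your dichotomy even lands on the formally stronger conclusion $I=\{0\}$ or $J=\{0\}$, but this is consistent --- the paper's proof secretly yields it too, since it establishes $I\cap J=\{0\}$ en route. One caveat: your opening reduction to a \emph{homogeneous} regular sequence via graded prime avoidance needs $R$ to be Noetherian (so that the relevant sets of associated primes are finite), a hypothesis the lemma does not carry, whereas the paper's argument never requires the regular sequence itself to be homogeneous --- only $I$ and $J$. This is harmless for the application (the two extra elements of the extended phsop, reduced modulo $D$, are already homogeneous), but as a standalone statement your version proves the lemma under the hypothesis ``$I+J$ contains a homogeneous regular sequence of length~$2$''; your degree argument for $\gamma\in K$ also tacitly assumes the grading is non-negative, which is the intended reading of the hypothesis on $R$.
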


\begin{proof}
  We have a regular sequence $a_1,a_2 \in I + J$, so
  \[
    a_1 \cdot (I \cap J) \subseteq (I + J) \cdot (I \cap J) \subseteq I
    \cdot J = \{0\}.
  \]
  Therefore $I \cap J = \{0\}$ since~$a_1$ is not a zero divisor. So the
  map~$\phi$: $R \to R/I \oplus R/J$, $x \mapsto (x + I,x + J)$ is
  injective. With~$\psi$: $R/I \oplus R/J \to R/(I + J)$,
  $(x + I,y + J) \mapsto (x - y) + (I + J)$, the sequence
  \[
    \{0\} \longrightarrow R \stackrel{\phi}{\longrightarrow} R/I
    \oplus R/J \stackrel{\psi}{\longrightarrow} R/(I + J)
    \longrightarrow \{0\}
  \]
  is easily checked to be exact. So \cref{lExact} (see below) yields
  $x,y \in R$ such that $(x - y) + (I + J) = 1 + (I + J)$ and
  $(I + J) \cdot x \subseteq I$ and $(I + J) \cdot y \in J$. The
  equality implies that we are in at least one of the following cases:
  (1) $I$ or $J$ contains an element with nonzero component in
  degree~$0$, (2) $x$ has nonzero component in degree~$0$, or (3) $y$
  has nonzero component in degree~$0$. In case~(1), $I$ or $J$ is
  equal to $R$, and the assertion follows. In case~(2),
  $(I + J) \cdot x \subseteq I$ implies that every homogeneous element
  of $J$ lies in $I$, so $J \subseteq I$, and case~(3) works
  analogously.
\end{proof}

Some readers may recognize the formula $\Ext^1_R(R/I,R) = 0$ in the
assertion of the next lemma, which was used in the proof of
\cref{lConnectedness}.

\begin{lemma} \label{lExact}%
  Let $I \subset R$ be an ideal in a ring such that $I$ contains a
  regular sequence of length~2. Then for every exact sequence
  \[
  \{0\} \longrightarrow R \stackrel{\phi}{\longrightarrow} M
  \stackrel{\psi}{\longrightarrow} R/I \longrightarrow \{0\}
  \]
  of $R$-modules, there exists $m \in M$ with $\psi(m) = 1 + I$ and
  $I \cdot m = \{0\}$. (Equivalently, the sequence splits.)
\end{lemma}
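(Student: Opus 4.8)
The plan is to prove the equivalent statement in parentheses, namely that the sequence splits; this is exactly the assertion that produces the desired~$m$. Since $R/I$ is generated by $1 + I$, giving a splitting $R/I \to M$ is the same as producing an element $m \in M$ with $\psi(m) = 1 + I$ and $I \cdot m = \{0\}$, the second condition being precisely what makes $r + I \mapsto r m$ well defined. What we are really showing is that $\Ext^1_R(R/I,R) = 0$, and I would make this concrete as follows. Identify $R$ with $\phi(R) \subseteq M$, so that $\psi$ becomes the quotient map $M \to M/R \cong R/I$, and choose any $m_0 \in M$ with $\psi(m_0) = 1 + I$. For $a \in I$ we then have $\psi(a m_0) = a \cdot (1 + I) = 0$, so $a m_0 \in R$; this defines an $R$-linear map $\delta\colon I \to R$, $a \mapsto a m_0$.

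Next I would observe that the sequence splits precisely when $m_0$ can be corrected by an element of $R$ so as to annihilate $I$. Indeed, if $\delta(a) = s a$ for all $a \in I$ with a single $s \in R$, then $m := m_0 - s$ satisfies $\psi(m) = 1 + I$ (because $s \in R = \ker\psi$) and $a m = \delta(a) - s a = 0$ for all $a \in I$, which is what we want. So everything reduces to the purely ideal-theoretic claim that every $R$-linear map $\delta\colon I \to R$ is the restriction of multiplication by some $s \in R$. In the language of the short exact sequence $\{0\} \to I \to R \to R/I \to \{0\}$, this says that the connecting map $\operatorname{Hom}_R(I,R) \to \Ext^1_R(R/I,R)$ is zero, i.e.\ that $\Ext^1_R(R/I,R)$ vanishes.

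This last claim is the heart of the matter, and it is exactly here that the regular sequence $a_1,a_2 \in I$ enters. Applying $\delta$ to the trivial relation $a_1 a_2 = a_2 a_1$ and using $R$-linearity gives $a_2 \delta(a_1) = a_1 \delta(a_2)$. Reading this modulo $(a_1)$, the element $\delta(a_1)$ is annihilated by $a_2$ in $R/(a_1)$; since $a_2$ is a nonzerodivisor there, we get $\delta(a_1) \in (a_1)$, say $\delta(a_1) = s a_1$, and then $a_1 \delta(a_2) = a_2 a_1 s$ together with $a_1$ being a nonzerodivisor forces $\delta(a_2) = s a_2$. Finally, for an arbitrary $a \in I$ the relation $a_1 a = a a_1$ yields $a_1 \delta(a) = a \delta(a_1) = a_1 (s a)$, and cancelling the nonzerodivisor $a_1$ gives $\delta(a) = s a$ on all of $I$, as required. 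I expect this to be the main obstacle, in the sense that it is the only point where the hypothesis is actually used and where one must recognize that length-$2$ regularity is exactly what is needed (one uses $a_1$ a nonzerodivisor on $R$ and $a_2$ a nonzerodivisor on $R/(a_1)$); the surrounding reduction to the splitting problem is routine bookkeeping.
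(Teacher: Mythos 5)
Your proof is correct and follows essentially the same route as the paper's: you choose a lift $m_0$ of $1+I$, observe $I\cdot m_0\subseteq \phi(R)$, use the relation $a_2\delta(a_1)=a_1\delta(a_2)$ together with regularity to extract $s$ with $\delta=s\cdot(-)$ on $I$, and correct $m_0$ by $s$ — this is exactly the paper's computation with $b_i=\delta(a_i)$ and $y=s$, merely repackaged as the statement that every $R$-linear map $I\to R$ is multiplication by an element of $R$.
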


\begin{proof}
  We have a regular sequence $a_1,a_2 \in I$. Choose $m' \in M$ with
  $\psi(m') = 1 + I$. For $x \in I$ we have
  $\psi(x m') = x \psi(m') = 0$. So
  \begin{equation} \label{3eqIm}%
    I \cdot m' \subseteq \im(\phi).
  \end{equation}
  In particular, $a_i m' = \phi(b_i)$ with $b_i \in R$, so
  \[
  \phi(a_2 b_1) = a_2 \phi(b_1) = a_2 a_1 m' = a_1 \phi(b_2) =
  \phi(a_1 b_2).
  \]
  Thus $a_2 b_1 = a_1 b_2$, and the regularity of $a_1,a_2$ implies
  $b_1 = a_1 y$ with $y \in R$. Set $m := m' - \phi(y)$. Then
  $\psi(m) = \psi(m') = 1 + I$. Now let $x \in I$. By~\cref{3eqIm}
  there is $z \in R$ with $x m' = \phi(z)$, so
  $\phi(a_1 z) = a_1 x m' = x \phi(b_1) = \phi(x b_1)$, and we obtain
  $a_1 z = x b_1 = a_1 x y$. By the regularity of~$a_1$, this implies
  $z = x y$, so
  \[
  x m = x m' - x \phi(y) = x m' - \phi(x y) = \phi(z) - \phi(z) = 0.
  \]
  This concludes the proof.
\end{proof}

\cref{tDufresne} was extended a few years later, which led to the
following beautiful result:

\begin{theorem}[Emilie Dufresne and Jack
  Jeffries~\cite{Dufresne:Jeffries:2013}] \label{tDufresneJeffries}%
  Assume that $K$ is algebraically closed and there is a separating
  set of~$n + k - 1$ invariants. Then $G$ is generated by
  $k$-reflections, i.e., by elements fixing a subspace of
  codimension~$k$.
\end{theorem}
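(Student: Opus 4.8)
The plan is to follow the proof of \cref{tDufresne} as closely as possible, replacing ``codimension~$1$'' by ``codimension~$k$'' throughout. The first and easier half is to upgrade \cref{pDufresne} to its codimension-$k$ version, which is exactly the content of \cref{rDufresne}: the graph $\Gamma$ of the action is connected in codimension~$k$ if and only if $G$ is generated by $k$-reflections, and if it is not, there is a witness $(X,Y)$ of disconnectedness given by homogeneous ideals. The argument is verbatim that of \cref{pDufresne}: the irreducible components $Z_\sigma$ of $\Gamma$ meet in $Z_\sigma \cap Z_\tau \cong V^{\sigma^{-1}\tau}$, and this intersection has codimension $>k$ in $\Gamma$ precisely when the fixed space $V^{\sigma^{-1}\tau}$ has codimension $>k$, i.e.\ when $\sigma^{-1}\tau$ is not a $k$-reflection. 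Lumping the components together along cosets of the subgroup generated by all $k$-reflections then produces, or rules out, a homogeneous witness exactly as before.

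It remains to show that the existence of $n+k-1$ separating invariants forces $\Gamma$ to be connected in codimension~$k$. As in the proof of \cref{tDufresne}, the separating invariants $f_1 \upto f_{n+k-1}$ give $\Gamma = \Var_{V \times V}(D)$ with $D := (\Delta f_1 \upto \Delta f_{n+k-1})$, so $\Gamma$ is cut out by $n+k-1$ equations inside $V \times V$. Since $\dim(\Gamma) = n$, its codimension in $V \times V$ is $n$, and the number of defining equations exceeds this codimension by exactly $k-1$. The plan is then to invoke a connectedness theorem to the effect that a closed subvariety of $V \times V$ that can be defined by $r$ equations and has codimension~$c$ is connected in codimension $r - c + 1$; here $r = n+k-1$ and $c = n$, so $\Gamma$ is connected in codimension~$k$, and \cref{rDufresne} finishes the proof.

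The hard part is precisely this connectedness input, because the step that made the proof of \cref{tDufresne} elementary is no longer available. There, $k = 1$ and $D$ was generated by $n$ elements cutting out codimension~$n$, so the $\Delta f_i$ formed a phsop, $R := K[\mathbf x,\mathbf y]/D$ was a complete intersection and hence Cohen-Macaulay, and one could extend the phsop by two further elements of $I + J$ (using \cref{rPhsop}) to a regular sequence and apply \cref{lConnectedness}. For $k > 1$ the ideal $D$ has $n + k - 1 > n$ generators but only height~$n$, so the $\Delta f_i$ are no longer a regular sequence and $R$ fails to be Cohen-Macaulay. Consequently, even though $X \cap Y$ has codimension $\ge n + k + 1$ in $K[\mathbf x, \mathbf y]$, the image $(I+J)/D$ need not contain a regular sequence of length~$2$ in $R$, and so \cref{lConnectedness} can no longer be supplied its hypothesis. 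The way out is to replace the complete-intersection argument by the full strength of Hartshorne's connectedness theorem in the form due to Grothendieck and Faltings, whose proof counts only the number of defining equations and proceeds through local cohomology: the arithmetic rank of $D$ is at most $n+k-1$, which bounds the cohomological dimension and thereby forces connectedness in codimension~$k$. Establishing and applying this sharper connectedness statement, in place of the elementary \cref{lConnectedness}, is the main obstacle to a self-contained proof.
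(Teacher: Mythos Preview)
Your proposal is correct and matches the paper's own argument essentially verbatim: the paper also says to combine \cref{rDufresne} with Grothendieck's connectedness theorem in place of Hartshorne's, calling the proof ``bafflingly simple.'' The only nuance the paper adds that you omit is a parenthetical caution that Grothendieck's result is stated for complete local rings, so some care is needed in passing to that setting; otherwise your account, including the explanation of why the elementary route via \cref{lConnectedness} breaks down for $k>1$, is more detailed than what the paper provides.
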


Even the corollary that if there are $n + k - 1$ {\em generating}
invariants, then $G$ is a $k$-reflection group, is new. The proof is
bafflingly simple: just replace Hartshorne's connectedness theorem by
Grothendieck's connectedness theorem and combine this with
\cref{rDufresne}. (But some care should be taken since Grothendieck's
result is actually about complete local rings.%
) \\

The topic of separating invariants has attracted some sustained
interest. At the time of writing this, searching articles with
``separating invariants'' in the title on MathSciNet produces 25 hits,
the latest one from 2022. Since comprehensiveness is not among the
main goals of this article, let us desist from giving accounts of them
and move on to the topic of infinite group invariants.

\section{Invariants of linearly reductive groups} \label{sReductive}

I have to report fewer new developments in algorithmic invariant
theory of infinite groups than of finite groups. This is why the
remaining sections of the paper are shorter than the previous
ones. But arguably the developments relating to infinite groups are
more important, since in Classical Invariant Theory and in Geometric
Invariant Theory the focus has always been on infinite groups.

It should be safe to say that the most dramatic development in this
area has been Harm Derksen's discovery~\cite{Derksen:99} of an
algorithm for computing generating invariants for linearly reductive
groups. To shift the narrative from the mathematical to the personal:
I was told that Bernd Sturmfels became completely ecstatic when he
learned about the algorithm for the first time. And for good reasons,
because Derken's algorithm ties in perfectly with the philosophy of
Bernd's book. To quote from Frank Grosshans' MathSciNet review of
Bernd's book: ``The efficacy of the Gr\"obner basis algorithms
presented in this book and their place among the tools of invariant
theory is yet to be decided.'' In this context, Derksen's algorithm
establishes the place of Gr\"obner basis methods in invariant theory
as front and center, once again showing Bernd's remarkable knack for
foreseeing where and how things will happen.

Going back to mathematics, let us explain Derksen's algorithm. We will
skip the proofs, directing readers to the presentation
in~\cite[Section~4.1]{Derksen:Kemper:2015}. The algorithm connects
nicely to the last section since the graph of the action
$\Gamma := \bigl\{\bigl(v,\sigma(v)\bigr) \mid v \in V, \ \sigma \in
G\bigr\} \subset V \times V$ (with $V := K^n$ as before) plays an
important role again. Its vanishing ideal
$D := \Id(\Gamma) \subseteq K[\mathbf x,\mathbf y]$ has come to be
known as the \df{Derksen ideal}, and we will discuss how it can be
computed in a moment. The following result paves the way for Derksen's
algorithm. Before we state it, recall that a linear algebraic group
$G$ is called \df{linearly reductive} if there is a Reynolds operator
$\mathcal R$: $\R \to \R^G$ (i.e., a projection that is constant on
orbits) for every morphic $G$-action by linear transformations of
the~$x_i$. Throughout, we assume $K$ to be algebraically closed.

\begin{theorem}[Derksen~\cite{Derksen:99}] \label{tDerksen}%
  Assume $G$ is linearly reductive and let $f_1 \upto f_m$ be
  homogeneous generators of the Derksen ideal $D$. Then the
  $f_i(\mathbf x,\mathbf 0) \in \R$, obtained by setting $y_j = 0$ for
  all~$j$, generate the Hilbert ideal $\I := \R \cdot \R^G_+$ (see at
  the beginning of \cref{sNonmodular}).
\end{theorem}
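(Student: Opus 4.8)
The plan is to rephrase the claim as an identity of ideals in $\R$. Let $\pi\colon K[\mathbf x,\mathbf y]\to\R$ be the substitution $\mathbf y\mapsto\mathbf 0$. Since the $f_i$ generate $D$ and $\pi$ is a surjective ring homomorphism, the ideal generated by the $f_i(\mathbf x,\mathbf 0)=\pi(f_i)$ is exactly $\pi(D)$. So it suffices to prove $\pi(D)=\I$, which I would do by two inclusions.

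The inclusion $\I\subseteq\pi(D)$ is easy and does not really use reductivity. For every invariant $f\in\R^G$ the polynomial $f(\mathbf x)-f(\mathbf y)$ vanishes on $\Gamma$ (because $f(v)=f(\sigma v)$), hence $f(\mathbf x)-f(\mathbf y)\in D=\Id(\Gamma)$. If moreover $f\in\R^G_+$, its constant term is $0$, so $\pi$ sends this element to $f(\mathbf x)$. Thus $\R^G_+\subseteq\pi(D)$, and since $\pi(D)$ is an ideal, $\I=\R\cdot\R^G_+\subseteq\pi(D)$.

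The reverse inclusion $\pi(D)\subseteq\I$ is where the real work, and the main obstacle, lies. A purely geometric argument only yields $\sqrt\I$: a point $v$ of the nullcone satisfies $0\in\overline{Gv}$, so $(v,\mathbf 0)\in\overline\Gamma$, whence every $g\in D$ satisfies $g(\mathbf x,\mathbf 0)\in\Id(\Var(\I))=\sqrt\I$; this is too weak, because $\I$ need not be radical. To land in $\I$ on the nose I would exploit an extra symmetry. The graph $\Gamma$, and hence $D$, is invariant not only under the diagonal action but already under letting $G$ act on the $\mathbf y$-variables alone, since $(v,\sigma v)\mapsto(v,\tau\sigma v)\in\Gamma$. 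Linear reductivity supplies a Reynolds operator $\mathcal R_{\mathbf y}$ for this action; it is $\R$-linear and maps $D$ into $D$. As $\mathcal R_{\mathbf y}$ preserves the total $\mathbf y$-degree and fixes $\mathbf y$-degree-$0$ terms, evaluation at $\mathbf y=\mathbf 0$ gives $\mathcal R_{\mathbf y}(g)(\mathbf x,\mathbf 0)=g(\mathbf x,\mathbf 0)$. Replacing $g$ by $\mathcal R_{\mathbf y}(g)$ therefore leaves $g(\mathbf x,\mathbf 0)$ unchanged while making $g$ invariant in $\mathbf y$.

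With $g$ now of the form $g(\mathbf x,\mathbf 0)+\sum_j p_j(\mathbf x)q_j(\mathbf y)$, where the $q_j\in K[\mathbf y]^G$ are homogeneous of positive degree, I would substitute $\mathbf y=\mathbf x$. Since the diagonal lies in $\Gamma$ (take $\sigma=e$), we have $g(\mathbf x,\mathbf x)=0$, which rearranges to $g(\mathbf x,\mathbf 0)=-\sum_j p_j(\mathbf x)\,q_j(\mathbf x)$. Each $q_j(\mathbf x)$ is a nonconstant homogeneous invariant, so the right-hand side lies in $\R\cdot\R^G_+=\I$, completing the reverse inclusion. The step I expect to need the most care is justifying that the $\mathbf y$-Reynolds reduction is legitimate and degree-preserving: this is precisely where linear reductivity is indispensable and where the argument sharpens the naive conclusion $g(\mathbf x,\mathbf 0)\in\sqrt\I$ into the exact membership $g(\mathbf x,\mathbf 0)\in\I$.
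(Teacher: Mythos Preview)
The paper does not actually prove \cref{tDerksen}; it explicitly states ``We will skip the proofs, directing readers to the presentation in~\cite[Section~4.1]{Derksen:Kemper:2015}.'' So there is no in-paper proof to compare against.

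That said, your argument is correct and is essentially the standard proof found in the cited reference. The reduction to showing $\pi(D)=\I$ is clean; the easy inclusion is right; and for the hard inclusion your key move---applying the Reynolds operator for the $G$-action on the $\mathbf y$-variables alone (which preserves $D$ since $\Gamma$ is stable under $(v,w)\mapsto(v,\tau w)$, and preserves $\mathbf y$-degree since the action is linear), then specializing $\mathbf y\mapsto\mathbf x$ using that the diagonal lies in $\Gamma$---is exactly the mechanism Derksen uses. The only points worth tightening in a final write-up are the routine justifications you already flag: that the Reynolds operator on a rational $G$-module restricts to any $G$-stable submodule (functoriality, standard for linearly reductive groups), and that $q_j(\mathbf x)\in\R^G$ because the $G$-action on the $\mathbf y$-variables is the same representation as on the $\mathbf x$-variables.
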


So from homogeneous generators of $D$ we obtain homogeneous generators
of $\I$, and by a slight extension of \cref{pHilbertIdeal}, applying
the Reynolds operator to them yields generators of the invariant
ring. Since the Reynolds operator is not always known or easily
implementable, a less elegant but more effective variant consists of
throwing away the generators~$f_i$ and only remembering their
degrees~$d_i$, and then computing bases of the spaces $\R^G_{d_i}$ of
homogeneous invariants of degrees~$d_i$ from scratch, which is a
rather simple matter (see
\cite[Algorithm~4.5.1]{Derksen:Kemper:2015}).

With this, everything comes down to the question of how the Derksen
ideal can be computed. To answer it, we must first specify how the
action of $G$ is given. For this, we make ``a morphic action of a
linear algebraic group by linear transformations of the~$x_i$''
explicit. So $G$ is given as an affine variety in $K^r$ by its
vanishing ideal $I_G = (g_1 \upto g_l) \subseteq K[z_1 \upto z_r]$,
with the $z_i$ new variables. Moreover, the action is given by
$\sigma(x_i) = f_i(\sigma)$ for
$\sigma \in G = \Var(I_G) \subseteq K^r$, where
$f_i = \sum_{j=1}^n a_{i,j} x_j$ with $a_{i,j} \in K[\mathbf z]$, and
$f_i(\sigma) := \sum_{j=1}^n a_{i,j}(\sigma) x_j$. Now it is easy to
see that the ideal
\begin{equation} \label{eqD0}%
  \widehat D := \bigl(g_1 \upto g_l,f_1 - y_1 \upto f_n - y_n\bigr)
  \subseteq K[\mathbf x,\mathbf y,\mathbf z]
\end{equation}
is the vanishing ideal of
$\widehat \Gamma := \bigl\{(v,w,\sigma) \in V \times V \times K^r \mid \sigma
\in G, \ w = \sigma(v)\bigr\}$. This implies that the elimination
ideal $K[\mathbf x,\mathbf y] \cap \widehat D$ is the vanishing ideal
$D = \Id(\Gamma)$ of $\Gamma$. So the Derksen ideal can be computed as
an elimination ideal of $\widehat D$, where $\widehat D$ can be formed
immediately from the data defining the group and the action. It is the
computation of this elimination ideal where Gr\"obner basis methods
come in, and where the bulk of the work of Derksen's algorithm lies.

Now we are ready to present the algorithm.

\begin{alg}[Derksen's algorithm~\cite{Derksen:99}] \label{aDerksen} \mbox{}
  \begin{description}
  \item[\bf Input] A linearly reductive group $G$, given by its
    vanishing ideal
    $I_G = (g_1 \upto g_l) \linebreak \subseteq K[z_1 \upto z_r]$, and
    a morphic $G$-action given by polynomials
    $f_1 \upto f_n \in K[x_1 \upto x_n]$ as described above.
  \item[\bf Output] A generating set of the invariant ring $\R^G$ as
    an algebra.
  \end{description}
  \begin{enumerate}[label=(\arabic*)]
  \item \label{aDerksen1} Form the ideal
    $\widehat D \subseteq K[\mathbf x,\mathbf y,\mathbf z]$ as in
    Equation~\cref{eqD0}.
  \item \label{aDerksen2} Using Gr\"obner basis methods, compute the
    elimination ideal \linebreak
    $D := K[\mathbf x,\mathbf y] \cap \widehat D$. Let $f_1 \upto f_m$
    be homogeneous generators of $D$.
  \item \label{aDerksen3} For $i = 1 \upto m$, set $d_i := \deg(f_i)$
    and calculate a basis $B_i$ of the space $\R^G_{d_i}$ of
    homogeneous invariants of degree~$d_i$. For this, Algorithm~4.5.1
    from~\cite{Derksen:Kemper:2015} may be used. Alternatively, if the
    Reynolds operator has been implemented, set
    $B_i :=\bigl\{\mathcal R\bigl(f_i(\mathbf x,\mathbf
    0)\bigr)\bigr\}$.
  \item \label{aDerksen4} Now $B_1 \cup \cdots \cup B_m$ is a (usually
    not minimal) generating set of $\R^G$.
  \end{enumerate}
\end{alg}

Recall that in characteristic~$0$, all reductive groups are linearly
reductive, which includes the classical groups. So Derksen's algorithm
has brought much of Classical Invariant Theory into the realm of
algorithmic computability. But practically, its reach is not
unlimited, since it requires a huge Gr\"obner basis computation. For
example, the more advanced known results about invariants of binary
forms (see the account in \cite[Example~2.1.2]{Derksen:Kemper:2015})
are out of reach for Derksen's algorithm. Still, Derksen's algorithm
is a great tool if you need to know a particular invariant ring that
cannot be found in the literature, or if the algorithm happens to
perform faster than your search of the literature.

\section{Other infinite groups} \label{sInfinite}

In this section we take a brief look at infinite groups that are not
linearly reductive. These fall in two categories: reductive groups in
positive characteristic, and nonreductive groups. Typical examples for
the first case are the classical groups, and for the second case the
additive group. The section could also be titled ``other ways to use
the Derksen ideal'', since this ideal turns up in different contexts.

The Derksen ideal is defined as the ideal of polynomials in
$K[\mathbf x,\mathbf y]$ vanishing on all
$\bigl(v,\sigma(v)\bigr) \in V \times V$ with $\sigma \in G$,
$v \in V = K^n$, so
\[
  D = \bigcap_{\sigma \in G} \bigl(y_1 - \sigma(x_1) \upto y_n -
  \sigma(x_n)\bigr).
\]
This formula is better suited for generalizations than the original
geometric definition of $D$. For example, if a group $G$ acts on a
finitely generated (but not necessarily finite) field extension
$L = K(a_1 \upto a_n)$ of $K$, we
define the Derksen ideal as
\[
  D := \bigcap_{\sigma \in G} \bigl(y_1 - \sigma(a_1) \upto y_n -
  \sigma(a_n)\bigr) \subseteq L[y_1 \upto y_n].
\]
Of course this depends on the choice of the generators~$a_i$. An
important special case is that the $a_i$ are algebraically
independent, so $L$ is a rational function field. If $G$ is infinite,
this definition does not provide a way to compute $D$. So let us
assume that $G$, as in the previous section, is a linear algebraic
group given as a closed subset of $K^r$ by its vanishing ideal
$\Id(G) = (g_1 \upto g_l) \subseteq K[z_1 \upto z_r]$. Moreover,
assume that the $G$-action is by $K$-automorphisms and given by
polynomials $f_1 \upto f_n \in L[\mathbf z]$ such that
\begin{equation} \label{eqMorphic}%
  \sigma(a_i) = f_i(\sigma)
\end{equation}
(where~$f_i(\sigma)$ means specializing the $z_j$ to the coordinates
of~$\sigma \in K^r$). These are reasonable assumptions, and they are
good enough to make the Derksen ideal computable. In fact, we have
virtually the same result as in the last section:

\begin{prop}[Computing the Derksen ideal] \label{pDerksen}%
  In the above situation set
  \[
    \widehat D := \bigl(g_1 \upto g_l,f_1 - y_1 \upto f_n - y_n\bigr)
    \subseteq L[\mathbf y,\mathbf z].
  \]
  Then $D = L[\mathbf y] \cap \widehat D$.
\end{prop}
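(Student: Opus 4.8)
The plan is to prove the two inclusions $D \subseteq L[\mathbf y] \cap \widehat D$ and $L[\mathbf y] \cap \widehat D \subseteq D$ separately, mirroring the geometric argument from the previous section but now over the field $L$ rather than over $K[\mathbf x]$. The key conceptual point is that although $\widehat D$ lives in $L[\mathbf y, \mathbf z]$, the variety it cuts out is still governed by the graph of the $G$-action, and the elimination of the $z_j$ (the group variables) geometrically corresponds to projecting away the group and retaining only the relation between $\mathbf y$ and the generators $a_i$.

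First I would establish the inclusion $D \subseteq L[\mathbf y] \cap \widehat D$. Take $h \in D$, so $h \in \bigl(y_1 - \sigma(a_1) \upto y_n - \sigma(a_n)\bigr)$ for every $\sigma \in G$. I want to show $h \in \widehat D$, i.e. that $h$ lies in the ideal generated by the $g_j$ together with the $f_i - y_i$ inside $L[\mathbf y, \mathbf z]$. The natural move is to work modulo $\widehat D$: in the quotient $L[\mathbf y, \mathbf z]/\widehat D$, the relations $g_j = 0$ force $\mathbf z$ to range over (the coordinate ring of) $G$, and the relations $f_i = y_i$ identify $y_i$ with $f_i(\mathbf z) = \sigma(a_i)$ under the specialization $\mathbf z \mapsto \sigma$. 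So $h$, viewed modulo $\widehat D$, becomes a function on $G$ that vanishes identically, and by the Nullstellensatz-type correspondence (using that $\widehat D$ is the vanishing ideal of $\widehat\Gamma$, exactly as in the displayed construction preceding \cref{tDerksen}) this forces $h \in \widehat D$. Since $h \in L[\mathbf y]$ already, we get $h \in L[\mathbf y] \cap \widehat D$.

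For the reverse inclusion, I would take $h \in L[\mathbf y] \cap \widehat D$ and show $h \in D$. Since $h \in \widehat D$, write $h = \sum_j p_j g_j + \sum_i q_i (f_i - y_i)$ with $p_j, q_i \in L[\mathbf y, \mathbf z]$. Fix any $\sigma \in G$ and specialize $\mathbf z \mapsto \sigma$: the terms with $g_j$ vanish because $\sigma \in \Var(\Id(G))$, and each $f_i - y_i$ specializes to $\sigma(a_i) - y_i$ by \cref{eqMorphic}. Because $h$ does not involve $\mathbf z$, the left side is unchanged, giving $h \in \bigl(y_1 - \sigma(a_1) \upto y_n - \sigma(a_n)\bigr) \subseteq L[\mathbf y]$. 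As $\sigma$ was arbitrary, $h$ lies in the intersection over all $\sigma \in G$, which is precisely $D$.

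The main obstacle I anticipate is the first inclusion, and specifically the justification that vanishing of $h$ on the graph forces membership in the ideal $\widehat D$ rather than merely in its radical. In the finite-group setting of \cref{sReductive} this was handled by the remark that $\widehat D$ is genuinely the \emph{vanishing} ideal of $\widehat\Gamma$; here one must check that the same holds over $L$, which hinges on $G$ being reduced (so that $\Id(G)$ is radical) and on the defining relations $f_i - y_i$ being exactly those that present the coordinate ring of $\widehat\Gamma$ as $L[\mathbf z]/\Id(G)$ with $y_i$ adjoined as $f_i$. Once one knows $\widehat D = \Id(\widehat\Gamma)$, both inclusions follow cleanly; so the crux is to transport that identification from the $K[\mathbf x]$-setting to the present $L$-coefficient setting, which is essentially formal since the only change is that the base ring $K[\mathbf x]$ has been replaced by the field $L$.
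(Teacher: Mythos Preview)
Your reverse inclusion $L[\mathbf y] \cap \widehat D \subseteq D$ is correct and is exactly the paper's argument: write $h$ in terms of the generators of $\widehat D$, specialize $\mathbf z \mapsto \sigma$, and observe that $h$ is unchanged while the right-hand side lands in $\bigl(y_1 - \sigma(a_1) \upto y_n - \sigma(a_n)\bigr)$.

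The forward inclusion has a gap precisely where you flag it. You correctly identify that the issue is getting ideal membership rather than radical membership, but you then dismiss it as ``essentially formal since the only change is that the base ring $K[\mathbf x]$ has been replaced by the field $L$.'' This is not quite formal, and your framing via $\widehat D = \Id(\widehat\Gamma)$ does not transfer cleanly: there is no obvious variety $\widehat\Gamma$ here, since the coefficients now live in $L$ and you only have $K$-points of $G$ to evaluate at. A Nullstellensatz over $L$ is not available, and a Nullstellensatz over $K$ does not directly speak about polynomials with $L$-coefficients.

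The paper's argument supplies the missing step concretely and avoids geometry altogether. Given $h \in D$, substitute $y_i \mapsto f_i$ to get $\widetilde h := h(f_1 \upto f_n) \in L[\mathbf z]$; since $h - \widetilde h \in (f_1 - y_1 \upto f_n - y_n) \subseteq \widehat D$, it suffices to show $\widetilde h \in \widehat D$. One checks $\widetilde h(\sigma) = h\bigl(\sigma(a_1) \upto \sigma(a_n)\bigr) = 0$ for every $\sigma \in G$. Now the key move: pick a $K$-basis $B$ of $L$ and write $\widetilde h = \sum_{b \in B} h_b \cdot b$ with $h_b \in K[\mathbf z]$. Linear independence of $B$ over $K$ forces $h_b(\sigma) = 0$ for all $\sigma \in G$ and all $b$, so each $h_b$ lies in $\Id(G) = (g_1 \upto g_l) \subseteq K[\mathbf z]$. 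Hence $\widetilde h \in (g_1 \upto g_l) \subseteq \widehat D$. This $K$-basis decomposition is the substantive content hidden behind your phrase ``essentially formal''; once you add it, your proof is complete and coincides with the paper's.
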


\begin{proof}
  We start by taking $f \in D$. Set
  $\widetilde f := f(f_1 \upto f_n) \in L[\mathbf z]$. Then
  $f - \widetilde f \in \widehat{D}$, so for the inclusion
  ``$\subseteq$'' we need to show $\widetilde f \in \widehat D$. Let
  $\sigma \in G \subseteq K^r$. Then
  \[
    \widetilde f(\sigma) = f\bigl(f_1(\sigma) \upto f_n(\sigma)\bigr)
    = f\bigl(\sigma(a_1) \upto \sigma(a_n)\bigr) = 0,
  \]
  where the last equality comes from
  $f \in \bigl(y_1 - \sigma(a_1) \upto y_n - \sigma(a_n)\bigr)$. Let
  $B$ be a basis of $L$ as a vector space over $K$. Then we can write
  $\widetilde f = \sum_{b \in B} h_b \cdot b$ with
  $h_b \in K[\mathbf z]$, so
  $\sum_{b \in B} h_b(\sigma) \cdot b = \widetilde{f}(\sigma) = 0$,
  which implies $h_b(\sigma) = 0$ for all~$b$. Since this holds for
  every $\sigma \in G$, we conclude $h_b \in (g_1 \upto g_l)$ (as an
  ideal in $K[\mathbf z]$), so $\widetilde f \in (g_1 \upto g_l)$ (as
  an ideal in $L[\mathbf z]$). This implies
  $\widetilde f \in \widehat{D}$.

  For the reverse inclusion, take
  $f \in L[\mathbf y ] \cap \widehat{D}$. Then
  $f = \sum h_i g_i + \sum h_i' (f_i - y_i)$ with
  $h_i,h_i' \in L[\mathbf y,\mathbf z]$. Let $\sigma \in
  G$. Viewing~$f$ as a polynomial in $L[\mathbf y,\mathbf z]$ we can
  specialize the $z_i$ to the coordinates of~$\sigma$, which does not
  change~$f$. So
  \begin{multline*}
    f = f(\sigma) = \sum h_i(\sigma) g_i(\sigma) + \sum h_i'(\sigma)
    \bigl(f_i(\sigma) - y_i\bigr) = \\
    = \sum h_i'(\sigma) \bigl(\sigma(a_i) - y_i\bigr) \in \bigl(y_1 -
    \sigma(a_1) \upto y_n - \sigma(a_n)\bigr).
  \end{multline*}
  Since this holds for every $\sigma \in G$, we obtain $f \in D$.
\end{proof}

The following result is amazing because it uses the Derksen ideal in a
way that is very different from \cref{tDerksen}, but still arrives at
computing generating invariants. It is also remarkable that there is
no hypothesis such as reductivity required for the group. The theorem
probably goes back to M{\"u}ller-Quade and Beth~\cite{MQB:99}, but
their result was modified, simplified, generalized and extended by
quite a few authors, see \cite{HK:06,Kamke:Kemper:2011,kemper:2015}.

\begin{theorem}[Computing invariant fields] \label{tField}%
  In the above situation, let $\mathcal G \subseteq L[\mathbf y]$ be a
  reduced Gr\"obner basis, with respect to any monomial order, of the
  Derksen ideal. Then the invariant field $L^G$ is generated, as an
  extension of $K$, by the coefficients of all polynomials in
  $\mathcal G$.
\end{theorem}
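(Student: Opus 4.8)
I would prove the two inclusions $K(\text{coefficients of }\mathcal G) \subseteq L^G$ and $L^G \subseteq K(\text{coefficients of }\mathcal G)$ separately. The key conceptual point is that the Derksen ideal $D \subseteq L[\mathbf y]$ "knows" the invariant field: specializing $y_i \mapsto a_i$ collapses $D$ to zero (since each generator $y_i - \sigma(a_i)$ vanishes there for $\sigma = e$), and more importantly, the structure of $D$ as an ideal over the base field $K$ versus over $L$ should force its Gröbner basis coefficients to land inside $L^G$ and to generate it.

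Let me sketch both directions. For the containment of the coefficients in $L^G$, the plan is to exploit the $G$-equivariance. The group $G$ acts on $L[\mathbf y]$ through its action on coefficients in $L$ (acting trivially on the $y_i$); I would first check that $D$ is a $G$-stable ideal. Indeed, $\sigma$ permutes the principal ideals $(y_1 - \tau(a_1) \upto y_n - \tau(a_n))$ among themselves (sending the $\tau$-factor to the $\sigma\tau$-factor), so their intersection $D$ is fixed by $G$. Since the reduced Gröbner basis of a $G$-stable ideal with respect to a fixed monomial order is \emph{unique}, and $G$ acts by $K$-automorphisms fixing the monomials in $\mathbf y$, applying $\sigma$ to $\mathcal G$ yields another reduced Gröbner basis of the same ideal $D$, hence $\sigma(\mathcal G) = \mathcal G$ elementwise. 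Comparing coefficients monomial-by-monomial then shows every coefficient is fixed by every $\sigma \in G$, i.e., lies in $L^G$. Let me call this field $F := K(\text{coeffs of }\mathcal G) \subseteq L^G$.

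For the reverse inclusion $L^G \subseteq F$, the idea is that $\mathcal G$, being a Gröbner basis of $D$ with coefficients in $F$, already determines $D$ as an ideal \emph{defined over $F$}: that is, $D = D_F \cdot L[\mathbf y]$ where $D_F := D \cap F[\mathbf y]$ is generated by $\mathcal G$ inside $F[\mathbf y]$. The plan is to recover $L^G$ from $D_F$. I expect to argue that an element $a \in L$ is $G$-invariant if and only if the linear polynomial $y_i - a$ (for an appropriate coordinate, or more robustly, the statement that $a$ is "constant across the graph") is detectable from $D$. Concretely, one should show that for $a \in L$, we have $a \in L^G$ precisely when $a - a' \in D$-vanishing-locus data is trivial, and then that such $a$ must already lie in the subfield $F$ over which $D$ is defined — because the Gröbner basis computation producing $\mathcal G$ never leaves $F$, any invariant is expressible through the coefficient data.

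\textbf{The main obstacle.} The hard part will be the reverse inclusion, specifically making precise \emph{why} every invariant in $L^G$ is captured by the coefficients of $\mathcal G$ rather than merely being consistent with $D$. The subtle issue is that $L^G$ could a priori be a larger field than $F$, and I must rule this out. I expect the right tool is a reduction/normal-form argument: given $a \in L^G$, consider how $a$ interacts with the normal form of $\mathbf y$-monomials modulo $\mathcal G$, showing that invariance of $a$ forces it into the field generated by the normal-form coefficients, which are $F$-rational. Equivalently, one can use that $L$ is the function field of the graph $\Gamma$ and $L^G$ its subfield of functions constant along orbits, then identify $\mathcal G$ as encoding exactly the orbit-closure equivalence relation so that the coefficient field $F$ equals the function field of the quotient $\Gamma /\!\!\sim$. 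Pinning down this last identification cleanly — without circularity and while respecting that we work over the possibly transcendental base $L$ — is where the real care is needed; the equivariance argument for the first inclusion, by contrast, is essentially formal once uniqueness of reduced Gröbner bases is invoked.
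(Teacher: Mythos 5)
Your first inclusion is exactly the paper's argument and is fine: $G$ acts coefficient-wise on $L[\mathbf y]$ and preserves monomials, $D$ is $G$-stable, and the uniqueness of the reduced Gr\"obner basis forces $\sigma(\mathcal G) = \mathcal G$ elementwise, so all coefficients lie in $L^G$.

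The reverse inclusion, however, is a genuine gap --- and you flag it yourself as the place ``where the real care is needed'' without supplying the argument. What is missing is not a subtle identification of the coefficient field with the function field of a quotient; it is one concrete trick that makes the whole direction a few lines long. Given $b \in L^G$, use $L = K(a_1 \upto a_n)$ to write $b = f(a_1 \upto a_n)/g(a_1 \upto a_n)$ with $f,g \in K[\mathbf y]$ --- crucially, with coefficients in $K$, not merely in $L$. Then $h := f - b\,g \in L[\mathbf y]$ satisfies $h\bigl(\sigma(a_1) \upto \sigma(a_n)\bigr) = \sigma\bigl(f(a_1 \upto a_n) - b\,g(a_1 \upto a_n)\bigr) = 0$ for every $\sigma \in G$ (this is precisely where the invariance of $b$ enters), so $h$ lies in each ideal $\bigl(y_1 - \sigma(a_1) \upto y_n - \sigma(a_n)\bigr)$ and hence in $D$. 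Therefore $\NF_{\mathcal G}(h) = 0$, and the $L$-linearity of the normal form gives $\NF_{\mathcal G}(f) = b \cdot \NF_{\mathcal G}(g)$, where $\NF_{\mathcal G}(g) \ne 0$ because $g(a_1 \upto a_n) \ne 0$ means $g \notin D$ (take $\sigma = \mathrm{id}$). Thus $b = \NF_{\mathcal G}(f)/\NF_{\mathcal G}(g)$, and since $f$ and $g$ have coefficients in $K$ and every reduction step uses only elements of $\mathcal G$, both normal forms have coefficients in the field $F$ generated by the coefficients of $\mathcal G$; hence $b \in F(\mathbf y) \cap L = F$. Your proposed route via ``$D$ is defined over $F$'' and recovering $L^G$ from $D \cap F[\mathbf y]$ would still need exactly this computation (or an equivalent one) to close the loop, so as written the second half of your proposal is a plan rather than a proof.
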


\begin{proof}
  With $G$ acting coefficient-wise on $L[\mathbf y]$, the action
  preserves the set of monomials in a polynomial. So for
  $\sigma \in G$, $\sigma(\mathcal G)$ is a reduced Gr\"obner basis of
  $\sigma(D) = D$. The uniqueness of reduced Gr\"obner bases (see
  \cite[Theorem~5.43]{BW}) yields $\sigma(\mathcal G) = \mathcal G$,
  so~$\sigma$ fixes every polynomial in $\mathcal G$. Therefore the
  field $L'$ generated by the coefficients of all polynomials in
  $\mathcal G$ is contained in $L^G$.

  For the reverse inclusion, let $b \in L^G$, which we can write as
  $b = \frac{f(a_1 \upto a_n)}{g(a_1 \upto a_n)}$ with
  $f,g \in K[\mathbf y]$. Setting $h := f - b g \in L[\mathbf y]$ and
  taking $\sigma \in G$, we have
  \[
    h\bigl(\sigma(a_1) \upto \sigma(a_n)\bigr) = \sigma\bigl(f(a_1
    \upto a_n) - b g(a_1 \upto a_n)\bigr) = 0,
  \]
  so $h \in D$ and therefore the normal form $\NF_{\mathcal G}(h)$ is
  zero. So $\NF_{\mathcal G}(f) - b \NF_{\mathcal G}(g) = 0$ by the
  $L$-linearity of the normal form. Since $g(a_1 \upto a_n) \ne 0$, we
  have $g \notin D$, so $\NF_{\mathcal G}(g) \ne 0$. This gives
  $b = \NF_{\mathcal G}(f)/\NF_{\mathcal G}(g)$. But computing the
  normal forms of~$f$ and~$g$ only involves polynomials from
  $L'[\mathbf y]$, so $b \in L'(\mathbf y) \cap L = L'$.
\end{proof}

\begin{ex}
  As a toy example, consider the action of the multiplicative group
  $G = \mathbb G_m$ on two variables~$x_1$ and~$x_2$ with weight
  $(1,-1)$. With the notation of \cref{pDerksen} we have
  \begin{multline*}
    \widehat D = \bigl(z_1 z_2 - 1, z_1 x_1 -y_1, z_2 x_2 - y_2\bigr)
    = \left(z_1 z_2 - 1, z_1 - \frac{1}{x_1} y_1, z_2 - \frac{1}{x_2}
      y_2\right) \\
    = \left(y_1 y_2 - x_1 x_2, z_1 - \frac{1}{x_1} y_1, z_2 -
      \frac{1}{x_2} y_2\right),
  \end{multline*}
  where the last displayed generating set is the reduced Gr\"obner
  basis w.r.t.\ any monomial order with $z_i > y_j$. So
  $y_1 y_2 - x_1 x_2$ is the reduced Gr\"obner basis of $D$, and
  $K(x_1,x_2)^G = K(x_1 \cdot x_2)$. Of course, we have known this all
  along.
\end{ex}

As mentioned above, four papers were cited for a result as simple as
\cref{tField} because it can be generalized and extended in various
ways. For example, for a group action on an irreducible affine variety
$X$, it is often possible to find a $G$-invariant common
denominator~$f$ for the coefficients of $\mathcal G$, and then one
gets a generating set of the localization $K[X]^G_f$. This does not
require $G$ to be reductive, or even $K[X]^G$ to be finitely
generated. In fact, there is a semi-algorithm for computing $K[X]^G$
from $K[X]^G_f$, which terminates if and only if the desired invariant
ring is finitely generated. (But, alas, there is no algorithm known
for determining finite generation.) \\

A further way of using the Derksen ideal forms a bridge to the next
section about separating invariants. Let $G$ be a reductive group in
positive characteristic, so it is not linearly reductive unless it is
a torus with a nonmodular finite group on top of it. Then, as for any
reductive group, we know that all invariants agree on two points
$v,w \in V$ if and only if the orbit closures intersect:
\begin{equation} \label{eqSeparate}%
  f(v) = f (w) \quad \text{for all} \quad f \in \R^G \quad
  \Longleftrightarrow \quad \overline{G(v)} \cap \overline{G(w)} \ne
  \emptyset.
\end{equation}
It is not hard to derive from this how to calculate the \df{separating
  variety} $\mathcal S$, defined to contain all pairs of points
$(v,w) \in V \times V$ where all invariants agree. In fact,
$\mathcal S$ is given by the elimination ideal
$K[\mathbf x,\mathbf y] \cap \bigl(D_{\mathbf x,\mathbf z},D_{\mathbf
  y,\mathbf z}\bigl)$, where $D_{\mathbf x,\mathbf z}$ and
$D_{\mathbf y,\mathbf z}$ stand for the Derksen ideal, as first
defined in the previous section, with the~$y_i$ or~$x_i$,
respectively, replaced by new variables~$z_i$. With this, one can also
compute separating invariants: just add homogeneous invariants~$f_i$
of rising degrees until the $f_i(\mathbf x) - f_i(\mathbf y)$ define
the variety $\mathcal S$. So we can compute a separating homogeneous
subalgebra $A \subseteq \R^G$. Now it is well known that (as we are in
characteristic~$p > 0$) the entire invariant ring is the purely
inseparable closure of $A$, i.e.,
$\R^G = \bigl\{f \in \R \mid f^q \in A \ \text{for some}\
p\text{-power} \ q\bigr\}$. And for the calculation of the purely
inseparable closure there is also an algorithm (using Gr\"obner basis
methods, as readers are probably expecting). In summary, we obtain an
algorithm for computing invariant rings of reductive groups in
positive characteristic, which complements Derksen's algorithm for
reductive groups in characteristic~$0$. For more details, see
\cite{kem.separating}, \cite[Section~4.9]{Derksen:Kemper:2015}, and
\cite{Derksen.Kemper06}, where the methods are extended to reductive
group actions on affine varieties instead of only vector spaces.

\section{Separating invariants of infinite groups}
\label{sSepInf}%

In the previous section we have considered separating invariants of
reductive group, so the focus here is on nonreductive groups. For such
groups, the invariant ring may not be finitely generated, and the
criterion~\cref{eqSeparate} may fail. Nevertheless, there always
exists a finite set of separating invariants. This may seem like an
amazing, deep fact, until one realizes how simple it is to prove: the
ideal in $K[\mathbf x,\mathbf y]$ generated by all
$\Delta f := f(\mathbf x) - f(\mathbf y)$ with $f \in \R^G$ can, by
Hilbert's basis theorem, be generated by the deltas of finitely many
invariants, which then form a separating set.

This argument is similar in spirit to Hilbert's first proof of finite
generation of $\R^G$ for linearly reductive groups. After Hilbert's
proof, it took about 100 years until a constructive version emerged in
the form of Derksen's algorithm. For separating invariants, we still
seem to be in the 100-year waiting period: at the time of writing
this, no algorithm has been found for computing a finite separating
set in the case of nonreductive groups.

We can, however, go further in another direction and quantify our
finiteness statement. The following result gives an upper bound on the
minimal number of separating invariants that is, surprisingly,
independent of the group. It is hard to pin down who first proved the
result, which has been folklore for quite a while.

\begin{theorem}[the number of separating
  invariants] \label{tSeparating}%
  If $K$ is an infinite field, there is a set of separating invariants
  of size $\le 2 n + 1$, with~$n$ the number of variables.
\end{theorem}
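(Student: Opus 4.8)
The plan is to start from a finite separating set, recast the separation condition as the injectivity of a linear projection, and then invoke a generic-projection argument whose dimension count yields exactly $2n+1$.

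First I would fix a finite separating set $f_1 \upto f_m \in \R^G$, which exists by the argument recalled at the beginning of this section. Assembling these into a morphism $\Phi = (f_1 \upto f_m)\colon V \to K^m$ (with $V = K^n$), the separating property says precisely that two points $v,w \in V$ can be separated by invariants if and only if $\Phi(v) \ne \Phi(w)$. Consequently, a subset $\{g_1 \upto g_k\} \subseteq \R^G$ is separating if and only if the induced map $(g_1 \upto g_k)\colon V \to K^k$ has the same fibers as $\Phi$. If we take the $g_j$ to be $K$-linear combinations of the $f_i$, say $g_j = \sum_i c_{ji} f_i$, then $(g_1 \upto g_k) = \pi \circ \Phi$ for the linear map $\pi\colon K^m \to K^k$ with matrix $(c_{ji})$, and since such $g_j$ are automatically invariants, ``same fibers as $\Phi$'' reduces to $\pi$ being \emph{injective on the image} $\Phi(V)$. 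So everything comes down to finding a linear map $\pi\colon K^m \to K^{2n+1}$ injective on $\Phi(V)$: its components are then the sought-after $2n+1$ separating invariants.

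The key step is the classical generic-projection bound. Since $\Phi(V)$ is the image of the $n$-dimensional space $V$, it has dimension $\le n$, and I would show that a generic linear projection to $K^{2n+1}$ is injective on it by projecting one coordinate at a time. A projection $K^{m'} \to K^{m'-1}$ along a direction $[\ell] \in \mathbb{P}^{m'-1}$ fails to be injective on the current image exactly when $\ell$ is parallel to a secant, that is, when $[\ell]$ lies in the closure of the image of the secant map $(v,w) \mapsto [\Phi(v) - \Phi(w)]$ defined on the open subset of $V \times V$ where $\Phi(v) \ne \Phi(w)$. This secant set is the image of a morphism on an open subset of $V \times V$, hence has dimension $\le \dim(V \times V) = 2n$ at every stage. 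Therefore, as long as the ambient projective dimension $m'-1$ exceeds $2n$, a generic direction avoids it, and because $K$ is infinite such a $K$-rational direction exists. Repeating the projection drops the ambient dimension until it reaches $2n+1$, where the secant set (of dimension $\le 2n$) can no longer be avoided and we stop; composing the projections produces the desired $\pi\colon K^m \to K^{2n+1}$.

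The main obstacle is precisely this dimension count together with the genericity: one must verify that the bad projection directions form a proper closed subvariety of $\mathbb{P}^{m'-1}$ of dimension $\le 2n$, and that over an infinite field its complement contains a $K$-rational point (a nonzero polynomial having a non-root over an infinite field). The value $2n+1 = 2\cdot n + 1$ is forced here, since the secant map has domain $V \times V$ of dimension $2n$, so separation of secants is lost exactly one coordinate before dimension $2n$; note also that we only need injectivity, not a closed immersion, so no tangent-direction conditions enter. The remaining ingredients—the reformulation through $\Phi$ and the fact that $K$-linear combinations of invariants are again invariants—are routine.
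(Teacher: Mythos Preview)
Your proof is correct and shares the same underlying idea as the paper's: both start from a finite separating set and iteratively reduce its size by one via a generic linear projection, the obstruction being the (at most $2n$-dimensional) set of secant directions of the image. The presentations differ markedly, though. You phrase the argument geometrically in terms of the secant variety of $\Phi(V)$ and a dimension count, implicitly relying on constructibility of images and on the fact that a proper $K$-defined closed subset of projective space over an infinite field misses a $K$-rational point. The paper instead gives a completely elementary, self-contained version: it observes that the $k$ polynomials $g_i := t\cdot\bigl(f_i(\mathbf{x})-f_i(\mathbf{y})\bigr)$ live in a polynomial ring with only $2n+1$ variables and are therefore algebraically dependent once $k>2n+1$, chooses $\alpha\in K^k$ with $H(\alpha)\ne 0$ for the resulting relation $H$ (this is where infiniteness of $K$ enters), and then verifies by a direct two-line computation that the $k-1$ invariants $\alpha_1 f_i-\alpha_i f_1$ still separate. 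Your approach buys conceptual clarity and the connection to the classical embedding-by-projection story; the paper's buys that nothing beyond transcendence degree is needed---no Chevalley, no variety language---which suits its expository aims and also yields the separating set explicitly as linear combinations of the original one.
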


Since the proof is nice and short, we present it here.

\begin{proof}
  We know from the above observation that there is a finite set of
  separating invariants $f_1 \upto f_k$. Assume $k > 2 n + 1$. Then
  with~$t$ an additional variable, the
  \[
    g_i := t \cdot \bigl(f_i(\mathbf x) - f_i(\mathbf y)\bigr) \in
    K[\mathbf x,\mathbf y,t] \quad (i = 1 \upto k)
  \]
  are algebraically dependent, so we have a nonzero polynomial $H$
  in~$k$ variables such that $H(g_1 \upto g_k) = 0$. We can choose
  $\alpha_1 \upto \alpha_k \in K$ with $H(\alpha_1 \upto \alpha_k) \ne
  0 $ and $\alpha_1 \ne 0$. Now we claim that the invariants
  \[
    \widetilde f_i := \alpha_1 f_i - \alpha_i f_1 \quad (i = 2 \upto
    k)
  \]
  form a separating set. If that is proved, we can repeat this until
  reaching a separating set of size $2 n + 1$. To prove the claim,
  assume that the $\widetilde f_i$ do not form a separating set, so
  there are points $v,w \in K^n$ such that all $\widetilde f_i$ agree
  on~$v$ and~$w$, but not all~$f_i$. Then from the definition of the
  $\widetilde f_i$ we see that
  \[
    \alpha_1\bigl(f_i(v) - f_i(w)\bigr) = \alpha_i \bigl(f_1(v) -
    f_1(w)\bigr),
  \]
  so $f_1(v) \ne f_1(w)$. Now let $\Phi$: $K[\mathbf x,\mathbf y,t]
  \to K$ be the map given by evaluating a polynomial at the point
  $(v,w,\eta)$ with $\eta := \frac{\alpha_1}{f_1(v) - f_1(w)}$. Then
  \[
    \Phi(g_i) = \eta \cdot \bigl(f_i(v) - f_i(w)\bigr) = \alpha_i
    \quad (i = 1 \upto k),
  \]
  from which the contradiction
  \[
    H(\alpha_1 \upto \alpha_k) = H\bigl(\Phi(g_1) \upto
    \Phi(g_k)\bigr) = \Phi\bigl(H(g_1 \upto g_k)\bigr) = \Phi(0) = 0
  \]
  ensues.
\end{proof}

\begin{rem}
  The above proof really establishes the upper bound
  $2 \dim\bigl(\R^G\bigr) + 1$, because for the invariant ring, as for
  any subalgebra of a finitely generated algebra, the Krull dimension
  is equal to the transcendence degree (see
  \cite[Exercise~5.3]{Kemper.Comalg} or
  \cite[Proposition~2.3]{Giral:81}). Also notice that the proof never
  uses the group action, so the theorem holds in a broader context
  (see \cite[Theorem~5.3]{Kamke:Kemper:2011}). The proof is
  constructive, so once a finite separating set is known, it can be
  boiled down to one of size $\le 2 n + 1$. But a disadvantage is that
  starting out with a homogeneous separating set will produce a
  smaller separating set that is almost certainly inhomogeneous, since
  the smaller set consists of linear combinations of the larger set.
\end{rem}

Even though the focus in this section lies on nonreductive groups,
\cref{tSeparating} tells us something new also in the case of
reductive and even finite groups. In fact, there are lots of examples
where the minimum number of {\em generating} invariants vastly exceeds
the bound $2 n + 1$ (see \cite[Section~5]{Kamke:Kemper:2011}).

\begin{bibdiv}
\begin{biblist}

\bib{AF}{inproceedings}{
      author={Almkvist, Gert},
      author={Fossum, Robert~M.},
       title={Decompositions of exterior and symmetric powers of indecomposable
  {${\mathbb Z}/p{\mathbb Z}$}-modules in characteristic~$p$ and relations to
  invariants},
        date={1976--1977},
   booktitle={S{\'e}m. d'alg{\`e}bre {P}. {D}ubreil},
      series={Lecture Notes in Math.},
   publisher={Springer-Verlag},
     address={Berlin, Heidelberg, New York},
       pages={1\ndash 111},
}

\bib{BW}{book}{
      author={Becker, Thomas},
      author={Weispfenning, Volker},
       title={{G}r{\"o}bner bases},
   publisher={Springer-Verlag},
     address={Berlin, Heidelberg, New York},
        date={1993},
}

\bib{bens}{book}{
      author={Benson, David~J.},
       title={Polynomial invariants of finite groups},
      series={Lond.\ Math.\ Soc.\ Lecture Note Ser.},
   publisher={Cambridge Univ.\ Press},
     address={Cambridge},
        date={1993},
      number={190},
}

\bib{BSM:2018}{article}{
      author={Blum-Smith, Ben},
      author={Marques, Sophie},
       title={When are permutation invariants {C}ohen-{M}acaulay over all
  fields?},
        date={2018},
     journal={Algebra Number Theory},
      volume={12},
       pages={1787\ndash 1821},
}

\bib{magma}{article}{
      author={Bosma, Wieb},
      author={Cannon, John~J.},
      author={Playoust, Catherine},
       title={The {M}agma algebra system~{I}: The user language},
        date={1997},
     journal={J.~Symb. Comput.},
      volume={24},
       pages={235\ndash 265},
}

\bib{bou:81}{book}{
      author={Bourbaki, Nicolas},
       title={Groupes et alg\`{e}bres de {L}ie, chap.\ 4, 5, et 6},
   publisher={Masson},
     address={Paris},
        date={1981},
}

\bib{CGHSW:b}{article}{
      author={Campbell, H. E.~A.},
      author={Geramita, A.~V.},
      author={Hughes, I.~P.},
      author={Shank, R.~J.},
      author={Wehlau, D.~L.},
       title={Non-{C}ohen-{M}acaulay vector invariants and a {N}oether bound
  for a {G}orenstein ring of invariants},
        date={1999},
     journal={Canad.\ Math.\ Bull.},
      volume={42},
       pages={155\ndash 161},
}

\bib{Chevalley}{article}{
      author={Chevalley, Claude},
       title={Invariants of finite groups generated by reflections},
        date={1955},
     journal={Amer. J. Math.},
      volume={77},
       pages={778\ndash 782},
}

\bib{Derksen:99}{article}{
      author={Derksen, Harm},
       title={Computation of invariants for reductive groups},
        date={1999},
     journal={Adv. Math.},
      volume={141},
       pages={366\ndash 384},
}

\bib{Derksen:Kemper}{book}{
      author={Derksen, Harm},
      author={Kemper, Gregor},
       title={Computational invariant theory},
      series={Encyclopaedia of Mathematical Sciences},
   publisher={Springer-Verlag},
     address={Berlin, Heidelberg, New York},
        date={2002},
      number={130},
}

\bib{Derksen.Kemper06}{article}{
      author={Derksen, Harm},
      author={Kemper, Gregor},
       title={Computing invariants of algebraic group actions in arbitrary
  characteristic},
        date={2008},
     journal={Adv. Math.},
      volume={217},
       pages={2089\ndash 2129},
}

\bib{Derksen:Kemper:2015}{book}{
      author={Derksen, Harm},
      author={Kemper, Gregor},
       title={Computational invariant theory},
     edition={2},
      series={Encyclopaedia of Mathematical Sciences},
   publisher={Springer, Heidelberg},
     address={Heidelberg, New York, Dordrecht, London},
        date={2015},
      number={130},
}

\bib{Dufresne:2009}{article}{
      author={Dufresne, Emilie},
       title={Separating invariants and finite reflection groups},
        date={2009},
     journal={Adv. Math.},
      volume={221},
      number={6},
       pages={1979\ndash 1989},
}

\bib{Dufresne:Jeffries:2013}{article}{
      author={Dufresne, Emilie},
      author={Jeffries, Jack},
       title={Separating invariants and local cohomology},
        date={2015},
     journal={Adv. Math.},
      volume={270},
       pages={565\ndash 581},
}

\bib{eis}{book}{
      author={Eisenbud, David},
       title={Commutative algebra with a view toward algebraic geometry},
   publisher={Springer-Verlag},
     address={New York},
        date={1995},
}

\bib{ES}{article}{
      author={Ellingsrud, Geir},
      author={Skjelbred, Tor},
       title={Profondeur d'anneaux d'invariants en caract{\'e}ristique $p$},
        date={1980},
     journal={Compos.\ Math.},
      volume={41},
       pages={233\ndash 244},
}

\bib{FGGHMN:2020}{unpublished}{
      author={Ferraro, Luigi},
      author={Galetto, Federico},
      author={Gandini, Francesca},
      author={Huang, Hang},
      author={Mastroeni, Matthew},
      author={Ni, Xianglong},
       title={The {I}nvariant{R}ing package for {M}acaulay2},
        date={2020},
        note={arXiv preprint, see \url{https://arxiv.org/abs/2010.15331}},
}

\bib{fleisch:c}{article}{
      author={Fleischmann, Peter},
       title={The {N}oether bound in invariant theory of finite groups},
        date={2000},
     journal={Adv. in Math.},
      volume={156},
       pages={23\ndash 32},
}

\bib{Fogarty:2001}{article}{
      author={Fogarty, John},
       title={On {N}oether's bound for polynomial invariants of a finite
  group},
        date={2001},
     journal={Electron. Res. Announc. Amer. Math. Soc.},
      volume={7},
       pages={5\ndash 7},
}

\bib{Giral:81}{article}{
      author={Giral, Jos{\'e}~M.},
       title={Krull dimension, transcendence degree and subalgebras of finitely
  generated algebras},
        date={1981},
     journal={Arch. Math. (Basel)},
      volume={36},
       pages={305\ndash 312},
}

\bib{Gordeev.Kemper}{article}{
      author={Gordeev, Nikolai},
      author={Kemper, Gregor},
       title={On the branch locus of quotients by finite groups and the depth
  of the algebra of invariants},
        date={2003},
     journal={J. Algebra},
      volume={268},
       pages={22\ndash 38},
}

\bib{Hartshorne:1962}{article}{
      author={Hartshorne, Robin},
       title={Complete intersections and connectedness},
        date={1962},
     journal={Amer. J. Math.},
      volume={84},
       pages={497\ndash 508},
}

\bib{HK:06}{article}{
      author={Hubert, Evelyne},
      author={Kogan, Irina~A.},
       title={Rational invariants of an algebraic groups action. {C}onstructing
  and rewriting},
        date={2007},
     journal={J. Symb. Comput.},
      volume={42},
       pages={203\ndash 217},
}

\bib{Hughes:Kemper:b}{article}{
      author={Hughes, Ian},
      author={Kemper, Gregor},
       title={Symmetric powers of modular representations for groups with a
  {S}ylow subgroup of prime order},
        date={2001},
     journal={J. of Algebra},
      volume={241},
       pages={759\ndash 788},
}

\bib{hup}{book}{
      author={Huppert, Bertram},
       title={{E}ndliche {G}ruppen~{I}},
   publisher={Springer-Verlag},
     address={Berlin, Heidelberg, New York},
        date={1967},
}

\bib{Kamke:Kemper:2011}{article}{
      author={Kamke, Tobias},
      author={Kemper, Gregor},
       title={Algorithmic invariant theory of nonreductive groups},
        date={2012},
     journal={Qualitative Theory of Dynamical Systems},
      volume={11},
       pages={79\ndash 110},
}

\bib{KS:02}{article}{
      author={Karagueuzian, Dikran~B.},
      author={Symonds, Peter},
       title={The module structure of a group action on a polynomial ring: a
  finiteness theorem},
        date={2007},
     journal={J. Amer. Math. Soc.},
      volume={20},
       pages={931\ndash 967},
}

\bib{kem:g}{article}{
      author={Kemper, Gregor},
       title={An algorithm to calculate optimal homogeneous systems of
  parameters},
        date={1999},
     journal={J.~Symb. Comput.},
      volume={27},
       pages={171\ndash 184},
}

\bib{kem:h}{article}{
      author={Kemper, Gregor},
       title={On the {C}ohen-{M}acaulay property of modular invariant rings},
        date={1999},
     journal={J. of Algebra},
      volume={215},
       pages={330\ndash 351},
}

\bib{kem:flat}{article}{
      author={Kemper, Gregor},
       title={The depth of invariant rings and cohomology, {{\normalfont with
  an appendix by Kay Magaard}}},
        date={2001},
     journal={J. of Algebra},
      volume={245},
       pages={463\ndash 531},
}

\bib{kem.separating}{article}{
      author={Kemper, Gregor},
       title={Computing invariants of reductive groups in positive
  characteristic},
        date={2003},
     journal={Transformation Groups},
      volume={8},
       pages={159\ndash 176},
}

\bib{Kemper.Comalg}{book}{
      author={Kemper, Gregor},
       title={A course in commutative algebra},
      series={Graduate Texts in Mathematics},
   publisher={Springer-Verlag},
     address={Berlin, Heidelberg},
        date={2011},
      number={256},
}

\bib{kemper:2015}{article}{
      author={Kemper, Gregor},
       title={Using extended {D}erksen ideals in computational invariant
  theory},
        date={2016},
     journal={J. Symbolic Comput.},
      volume={72},
       pages={161\ndash 181},
}

\bib{Kemper:Lopatin:Reimers:2022}{article}{
      author={Kemper, Gregor},
      author={Lopatin, Artem},
      author={Reimers, Fabian},
       title={Separating invariants over finite fields},
        date={2022},
     journal={Journal of Pure and Applied Algebra},
      volume={226},
}

\bib{King:2013}{article}{
      author={King, Simon~A.},
       title={Minimal generating sets of non-modular invariant rings of finite
  groups},
        date={2013},
     journal={J. Symbolic Comput.},
      volume={48},
       pages={101\ndash 109},
}

\bib{Kreuzer:Robbiano:2005}{book}{
      author={Kreuzer, Martin},
      author={Robbiano, Lorenzo},
       title={Computational commutative algebra 2},
   publisher={Springer, Berlin, Heidelberg},
        date={2005},
}

\bib{Lorenz.Pathak}{article}{
      author={Lorenz, Martin},
      author={Pathak, Jay},
       title={On {C}ohen-{M}acaulay rings of invariants},
        date={2001},
     journal={J. of Algebra},
      volume={245},
       pages={247\ndash 264},
}

\bib{MQB:99}{inproceedings}{
      author={M{\"u}ller-Quade, J{\"o}rn},
      author={Beth, Thomas},
       title={Calculating generators for invariant fields of linear algebraic
  groups},
        date={1999},
   booktitle={Applied algebra, algebraic algorithms and error-correcting codes
  ({H}onolulu, {HI})},
      series={Lecture Notes in Comput. Sci.},
   publisher={Springer},
     address={Berlin},
       pages={392\ndash 403},
}

\bib{noe:a}{article}{
      author={Noether, Emmy},
       title={{D}er {E}ndlichkeitssatz der {I}nvarianten endlicher {G}ruppen},
        date={1916},
     journal={Math.\ Ann.},
      volume={77},
       pages={89\ndash 92},
}

\bib{rich:b}{article}{
      author={Richman, David~R.},
       title={Invariants of finite groups over fields of characteristic~$p$},
        date={1996},
     journal={Adv. in Math.},
      volume={124},
       pages={25\ndash 48},
}

\bib{Serre68}{inproceedings}{
      author={Serre, Jean-Pierre},
       title={Groupes finis d'automorphismes d'anneaux locaux r{\'e}guliers},
        date={1968},
   booktitle={Colloque d'alg{\`e}bre},
   publisher={Secr{\'e}tariat math{\'e}matique},
     address={Paris},
       pages={8\ndash 01 \ndash  8\ndash 11},
}

\bib{lsm}{book}{
      author={Smith, Larry},
       title={Polynomial invariants of finite groups},
   publisher={A.~K.~Peters},
     address={Wellesley, Mass.},
        date={1995},
}

\bib{stu}{book}{
      author={Sturmfels, Bernd},
       title={Algorithms in invariant theory},
   publisher={Springer-Verlag},
     address={Wien, New York},
        date={1993},
}

\bib{Symonds:2011}{article}{
      author={Symonds, Peter},
       title={On the {C}astelnuovo-{M}umford regularity of rings of polynomial
  invariants},
        date={2011},
     journal={Ann. of Math. (2)},
      volume={174},
       pages={499\ndash 517},
}

\end{biblist}
\end{bibdiv}

\end{document}